\newcounter{contador}
\numberwithin{contador}{section}
\newtheorem{theorem}[contador]{Theorem}
\newtheorem{prop}[contador]{Proposition}
\newtheorem{lemma}[contador]{Lemma}
\newtheorem{corollary}[contador]{Corollary}
\theoremstyle{definition}
\newtheorem{defi}[contador]{Definition}
\newtheorem{obs}[contador]{Remark}
\newtheorem{exe}[contador]{Example}
\newcommand{\G}{\mathcal{G}}
\newcommand{\tr}{\text{tr}}
\begin{document}

\title{Galois Theory under inverse semigroup actions}
\author[Lautenschlaeger and Tamusiunas]{Wesley G. Lautenschlaeger$^*$ and Thaísa Tamusiunas}
\address{Instituto de Matem\'{a}tica, Universidade Federal do Rio Grande do Sul,  Av. Bento Gon\c{c}alves, 9500, 91509-900. Porto Alegre-RS, Brazil}
\email{wesleyglautenschlaeger@gmail.com}
\email{thaisa.tamusiunas@gmail.com}
\date{}
\thanks{$^*$ Corresponding author}

\subjclass[2020]{ Primary. 16W22. Secondary. 20M18, 13B05, 06A15.} %    \subjclass is required.
    \keywords{inverse semigroups, Galois correspondence, Galois theory, partial actions.}
    
\begin{abstract}
 We develop a Galois theory of commutative rings under actions of finite inverse semigroups. We prove equivalences for the definition of Galois extension as well as a Galois correspondence theorem. We also discuss how the theory behaves in the case of inverse semigroups with zero. 
\end{abstract}

\maketitle

\vspace{0.5 cm}

% \noindent \textbf{2020 AMS Subject Classification:} 16W22, 20M18, 13B05, 06A15.

% \noindent \textbf{Keywords:} inverse semigroups, Galois correspondence, Galois theory, partial actions

\section{Introduction}

S. U. Chase, D. K. Harrison, and A. Rosenberg (CHR) developed, in 1965, a Galois Theory for finite groups acting on commutative rings, where they presented a generalization of the Fundamental Theorem of Galois Theory \cite{chase1969galois}. This work was grounded in the concept of Galois extensions of commutative rings as introduced by M. Auslander and O. Goldman in \cite{auslander1960brauer}. Shortly thereafter, Villamayor and Zelinski (VZ) also developed a commutative Galois theory in \cite{villamayor1966galois}, a little more general than CHR.  In their approach, they considered a Galois extension $R \subset S$ and a finite group $G$ of $R$-automorphisms of $S$, and established a correspondence between separable $R$-subalgebras of $S$ and certain class of subgroups of $G$ (the so-called ``fat'' ones). The tool to make this correspondence was a groupoid acting on a determined ring, and this was the first time that a groupoid action appeared in the context of Galois theory. Given an action $\beta = (A_g,\beta_g)_{g \in \G}$ of a groupoid $\G$ on a ring $A$, $\beta$ is said to be \emph{orthogonal} if $A = \bigoplus_{e \in \G_0} A_e$, where $\G_0$ is the set of identities of $\G$. In VZ, the groupoid action was orthogonal, as well as in other subsequent works that concern Galois theory for groupoid actions (e.g. \cite{bagio2012partial}, \cite{basata2021galois}, \cite{cortes2017characterisation}, \cite{garta2024}, \cite{paques2018galois}, \cite{pataIII} and \cite{pedrotti2023injectivity}), and it was required in order to ensure the invariance of the trace map.

Until then, the study of Galois theory for groupoid actions was limited to non-ordered groupoids. In this scenario, in \cite{lata2021galois} a Galois theory for ordered groupoids was introduced, and then the Ehresmann-Schein-Nambooripad (ESN) Theorem \cite[Theorem 4.1.8]{lawson1998inverse}, which establishes an equivalence between the categories of inverse semigroups and of inductive groupoids, was used to translate the Galois theory for orthogonal groupoid actions to the case of orthogonal inverse semigroup actions. However, the orthogonality condition for inverse semigroup actions requires that $A = \bigoplus_{e \in \max E(S)} A_e$, where $E(S)$ is the set of idempotent elements of the inverse semigroup $S$, and it trivializes ideals that are associated with elements that are not maximal with respect to the natural partial order of $S$. Therefore, the class of actions considered in \cite{lata2021galois} is not the most general possible.

The idea of relying on the ready-made theory of groupoids to construct a theory for inverse semigroups turns out not being the most effective way. Starting from this, we took a completely different approach to what had been done so far. The main idea was to relate $E$-unitary inverse semigroup actions to partial group actions to construct a new invariant trace map and, with this, obtain a Galois theory for $E$-unitary inverse semigroups; after that, extend the theory to any inverse semigroup action.

Therefore, the goal of this paper is to develop a Galois theory for inverse semigroup actions without requiring the orthogonality condition. The paper is structured as follows. In Section 2, we review fundamental concepts regarding inverse semigroups and Galois theory. Sections 3 and 4 focus on constructing a trace map that is invariant and proving an equivalence theorem for Galois extensions for the case of $E$-unitary inverse semigroups. Section 5 is focused on the Galois correspondence. Precisely, we demonstrate that if $S$ is a finite $E$-unitary inverse semigroup acting injectively on a commutative ring $A$ via a unital action $\beta$, then there is a one-to-one correspondence between the $\beta$-complete inverse subsemigroups of $S$ and the separable, $\beta$-strong $A^{\beta}$-subalgebras of $A$, where $A^\beta$ is the subring of invariants of $A$ over $\beta$. In Section 6, we present how to apply the results from the previous sections to construct a Galois correspondence theorem for general inverse semigroup actions. Finally, in Section 7, we discuss a theory involving inverse semigroups with zero. 

Throughout, rings and algebras are associative.

\section{Basic results}

This section is destined to the prerequisites concerning inverse semigroups, partial actions of groups and Galois extensions. 

\subsection{Inverse semigroups and partial orders}

The main reference of this subsection is \cite{lawson1998inverse}. A semigroup $S$ is called \emph{inverse} if for each $s \in S$ there exists a unique element, denoted $s^{-1}$, such that the following two equations are satisfied:
\begin{align*}
    ss^{-1}s = s \quad \text{and} \quad s^{-1}ss^{-1} = s^{-1}.
\end{align*}

An \emph{inverse subsemigroup} of an inverse semigroup is a subsemigroup closed under inverses.  An inverse semigroup with identity is called an \emph{inverse monoid}. We have that $(s^{-1})^{-1} = s$ and the elements $s^{-1}s$ and $ss^{-1}$ are idempotents, for all $s \in S$, where an \emph{idempotent} in a semigroup is an element $e$ such that $e^2 = e$. The set of all idempotents of the inverse semigroup $S$ is denoted by $E(S)$. An inverse subsemigroup $T$ of $S$ is called \emph{full} if $E(S) \subseteq T$.

Let $S$ be a semigroup equipped with a partial order $\leq$. We say
that $S$ is a \emph{partially ordered semigroup} if $a \leq b$ and $c \leq d$ imply that $ac \leq bd$.

Now consider an inverse semigroup $S$ and the following partial order on $S$:
\begin{align}\label{paror}
    s \preceq t \iff s = tf, \text{ for some } f \in E(S).
\end{align} 

This partial order is called the \emph{natural partial order on} $S$. An inverse semigroup $S$ is $E$\emph{-unitary} if, whenever $e \in E(S)$ and $e \preceq s$, then $s \in E(S)$, for all $s \in S$. 

Let $P$ be a set with a partial order $\leq$. We say that the subset $Q \subseteq P$ is an \emph{order ideal} of $P$ if given elements $p \in P$, $q \in Q$ such that $p \leq q$, then $p \in Q$. It is well known that $E(S)$ is an order ideal of an inverse semigroup $S$ with the natural partial order. We say that an element $w$ in $P$ is a \emph{meet} of the elements $a,b \in P$ if $w \leq a, b$ and if $c \leq a, b$ then $c \leq w$. In this case, we use the notation $w = a \wedge b$.

Let $S$ be an inverse semigroup. For $a, b \in S$, define $a \sim b$ if and only if $a^{-1}b, ab^{-1} \in E(S)$. This is called the \emph{compatibility relation}. Two elements $a, b \in S$ are said to be \emph{compatible} if $a \sim b$. Also, we define the relation $\sigma$ on $S$ by
    \begin{align*}
        s \sigma t \iff \exists u \preceq s,t,
    \end{align*}
    for all $s,t \in S$.

We say that an element $u \in S$ is a \emph{join} of the elements $s,t \in S$ if $s,t \preceq u$ and, moreover, if $s,t \preceq v$, then $u \preceq v$. In this case, we denote $u = s \vee t$.  An inverse semigroup is called  \emph{complete} if every non-empty compatible subset has a join. 

Next, we introduce a weaker notion of completeness. 
    
\begin{defi}
An inverse semigroup is said to be \emph{$f$-complete} if every non-empty \textbf{finite} compatible subset has a join.
\end{defi}

Clearly, if $S$ is complete, then it is $f$-complete.

     An inverse subsemigroup $T$ of $S$ is \emph{closed under joins} if, for any compatible subset $A \subseteq T$ such that $\bigvee A$ exists, it follows that $\bigvee A \in T$.

    A \emph{congruence} on a semigroup $S$ is an equivalence relation $\rho$ on $S$ such that $(a,b), (c,d) \in \rho$ implies $(ac, bd) \in \rho$.

\begin{theorem} \cite[Theorem 2.4.1]{lawson1998inverse} \label{teoconggrp1}
    Let $S$ be an inverse semigroup.
    \begin{enumerate}
        \item[(i)] $\sigma$ is the smallest congruence in $S$ containing $\sim$.

        \item[(ii)] $S/\sigma$ is a group.

        \item[(iii)] If $\rho$ is a congruence in $S$ such that $S/\rho$ is a group, then $\sigma \subseteq \rho$.
    \end{enumerate}
\end{theorem}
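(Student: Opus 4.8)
The plan is to reduce everything to two multiplicative reformulations of $\sigma$ and to exploit repeatedly the well-known fact that the idempotents of an inverse semigroup commute, so that $E(S)$ is a commutative subsemigroup. First I would record the standard equivalent descriptions of the natural partial order: $s \preceq t$ holds iff $s = ts^{-1}s$ iff $s = ss^{-1}t$ iff $s = tf$ with $f \in E(S)$. Using these I would establish the double characterization
\[
s\,\sigma\, t \iff \exists\, e\in E(S)\colon es = et \iff \exists\, f\in E(S)\colon sf = tf.
\]
Indeed, a common lower bound $u\preceq s,t$ yields $es = uu^{-1}s = u = uu^{-1}t = et$ with $e = uu^{-1}$, and dually $sf = su^{-1}u = u = tu^{-1}u = tf$ with $f = u^{-1}u$; conversely $es = et$ (resp. $sf = tf$) exhibits $es$ (resp. $sf$) as a common lower bound of $s$ and $t$. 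From this, reflexivity follows from $s = ss^{-1}s$, symmetry is immediate, and transitivity reduces to the commutativity of idempotents: if $es = et$ and $ft = fr$ then $(ef)s = (ef)r$, since $ef = fe \in E(S)$.

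Next I would prove that $\sigma$ is a congruence by checking left and right compatibility separately. For left compatibility I would use the left form: from $es = et$ set $g = aea^{-1}$, verify $g\in E(S)$ using that $e$ commutes with $a^{-1}a$, and compute $g(as) = ae(a^{-1}a)s = aes = aet = g(at)$; right compatibility is dual, using $sf = tf$ and $h = a^{-1}fa\in E(S)$ to get $(sa)h = sf(aa^{-1})a = sfa = tfa = (ta)h$. Hence $\sigma$ is a congruence and $S/\sigma$ is an inverse semigroup. For part (ii) I would note that any two idempotents satisfy $ef = ef \preceq e$ and $ef = fe \preceq f$, so $e\,\sigma\, f$; thus $S/\sigma$ has a single idempotent, and an inverse semigroup with a unique idempotent is a group.

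For the containment $\sim\,\subseteq\,\sigma$ in part (i), given $a\sim b$ (so $a^{-1}b, ab^{-1}\in E(S)$) I would check that $u = ab^{-1}b$ is a common lower bound: $u = a(b^{-1}b)\preceq a$ and $u = (ab^{-1})b\preceq b$, whence $a\,\sigma\, b$. Minimality is the converse inclusion: let $\rho$ be any congruence with $\sim\,\subseteq\,\rho$, and suppose $s\,\sigma\, t$ with common lower bound $u$, say $u = sf$ and $u = tf'$ for idempotents $f,f'$. Then $u^{-1}s = fs^{-1}s\in E(S)$ and $us^{-1} = sfs^{-1}\in E(S)$, so $u\sim s$, and likewise $u\sim t$; therefore $(u,s),(u,t)\in\;\sim\;\subseteq\rho$, and symmetry and transitivity of $\rho$ give $(s,t)\in\rho$, i.e. $\sigma\subseteq\rho$.

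Finally, for part (iii) I would use that a group has a unique idempotent: if $\rho$ is a congruence with $S/\rho$ a group, every idempotent of $S$ maps to the identity of $S/\rho$. Since the quotient map is a homomorphism of inverse semigroups, $[a^{-1}] = [a]^{-1}$, so for $a\sim b$ the relation $a^{-1}b\in E(S)$ forces $[a]^{-1}[b] = [a^{-1}b]$ to be the identity, whence $[a] = [b]$ and $(a,b)\in\rho$; thus $\sim\,\subseteq\,\rho$ and part (i) yields $\sigma\subseteq\rho$. I expect the genuine difficulty to sit in the congruence step: because idempotents do not commute with arbitrary elements, compatibility of $\sigma$ with products cannot be read off directly and requires the conjugation trick $e\mapsto aea^{-1}$, $f\mapsto a^{-1}fa$ together with the semilattice identities among idempotents. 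Once the double characterization is in place, the remaining parts are essentially bookkeeping.
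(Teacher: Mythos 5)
The paper itself gives no proof of this statement: it is quoted directly from Lawson's book (\cite[Theorem 2.4.1]{lawson1998inverse}), so there is no internal argument to compare yours against, and I assess your proposal on its own terms. It is essentially correct and follows standard textbook lines. The double characterization $s\,\sigma\,t \iff \exists e \in E(S),\ es=et \iff \exists f\in E(S),\ sf=tf$ is proved correctly; the conjugation trick $e\mapsto aea^{-1}$, $f\mapsto a^{-1}fa$ does yield left/right compatibility; the inclusion $\sim\,\subseteq\,\sigma$ via $u=ab^{-1}b$, the minimality argument (a common lower bound $u$ of $s,t$ satisfies $u\sim s$ and $u\sim t$, so any congruence containing $\sim$ relates $s$ and $t$), and the reduction of (iii) to (i) via the uniqueness of inverses in a group are all sound. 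One simplification worth noting: since the natural partial order makes $S$ a partially ordered semigroup (a fact the paper records), $u\preceq s,t$ immediately gives $au \preceq as,at$ and $ua \preceq sa,ta$, so the congruence property follows in one line, with no need for the multiplicative reformulation or the conjugation computation; your heavier route is nevertheless valid.

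The one genuine soft spot is in part (ii). From the fact that all idempotents of $S$ are $\sigma$-related you conclude that $S/\sigma$ ``has a single idempotent''; but a priori an idempotent of the quotient need not be the image of an idempotent of $S$, so this step silently invokes idempotent lifting (Lallement's lemma), as does your unproved assertion that $S/\sigma$ is an inverse semigroup (commutativity of idempotents in a quotient rests on the same lemma). Both facts are standard and citable, but you can bypass them entirely: writing $\varepsilon$ for the common class of the idempotents, one has
\begin{align*}
\varepsilon[s] = [ss^{-1}][s] = [ss^{-1}s] = [s] = [s][s^{-1}s] = [s]\varepsilon,
\qquad
[s][s^{-1}] = [s^{-1}][s] = \varepsilon,
\end{align*}
so $S/\sigma$ is a monoid in which every element has a two-sided inverse, hence a group, with no structure theory of the quotient required. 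With this patch (or an explicit appeal to Lallement's lemma) your proof is complete.
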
 

This is why we refer to  $\sigma$ as the \emph{minimum group congruence} in $S$. For any $s \in S$, we denote $\sigma(s)$ as the congruence class of the element $s$ under the relation $\sigma$. These relations and notations will be maintained throughout the paper.

\begin{theorem} \cite[Theorem 2.4.6]{lawson1998inverse} \label{teosigmaigsim}
    Let $S$ be an inverse semigroup. Then the following conditions
are equivalent:
    \begin{enumerate}
        \item[(i)] $S$ is $E$-unitary.

        \item[(ii)] $\sim = \sigma$.

        \item[(iii)] $\sigma(e) = E(S)$, for any $e \in E(S)$.
    \end{enumerate}
\end{theorem}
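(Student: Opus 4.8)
The plan is to prove the cycle of implications $(i) \Rightarrow (iii) \Rightarrow (ii) \Rightarrow (i)$, after first recording two facts valid in every inverse semigroup that I will reuse. First, $\sim\, \subseteq\, \sigma$: if $a \sim b$, then $u := ab^{-1}b$ satisfies $u = a(b^{-1}b) \preceq a$ and $u = (ab^{-1})b \preceq b$ since $ab^{-1}, b^{-1}b \in E(S)$, so $a\,\sigma\,b$. Second, any two idempotents are compatible and $\sigma$-related: for $e,f \in E(S)$ the element $ef = fe$ is idempotent with $ef \preceq e$ and $ef \preceq f$, whence $e \sim f$ and $e\,\sigma\,f$; in particular $E(S) \subseteq \sigma(e)$ for every $e \in E(S)$.

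For $(i) \Rightarrow (iii)$ I would fix $e \in E(S)$; since $E(S) \subseteq \sigma(e)$ is already known, only $\sigma(e) \subseteq E(S)$ remains. If $s\,\sigma\,e$, choose $u \preceq s,e$. As $u \preceq e \in E(S)$ and $E(S)$ is an order ideal, $u \in E(S)$, so $u$ is an idempotent with $u \preceq s$, and $E$-unitarity forces $s \in E(S)$. For $(iii) \Rightarrow (ii)$ I would use that $\sim\,\subseteq\,\sigma$ holds generally, so it suffices to show $\sigma \subseteq \sim$. Given $s\,\sigma\,t$, the congruence property of $\sigma$ (Theorem \ref{teoconggrp1}) lets me multiply on the left by $s^{-1}$ to get $s^{-1}t \,\sigma\, s^{-1}s \in E(S)$, hence $s^{-1}t \in \sigma(s^{-1}s) = E(S)$, and multiply on the right by $t^{-1}$ to get $st^{-1} \,\sigma\, tt^{-1} \in E(S)$, hence $st^{-1} \in E(S)$; thus $s \sim t$.

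The main obstacle I anticipate is $(ii) \Rightarrow (i)$, because the naive attempt collapses: if $e \in E(S)$ and $e \preceq s$, then $e\,\sigma\,s$ and so $e \sim s$ by hypothesis, but compatibility of $s$ with a single idempotent does \emph{not} by itself force $s \in E(S)$ (one can compatibly dominate an idempotent by a non-idempotent). The resolution is to exploit that under $(ii)$ the relation $\sim$ coincides with $\sigma$ and is therefore \emph{transitive}. Since $ss^{-1} \in E(S)$ is compatible with $e$, and $e \sim s$, transitivity gives $s \sim ss^{-1}$. Reading off the defining condition of compatibility, $s^{-1}(ss^{-1}) \in E(S)$; but $s^{-1}(ss^{-1}) = s^{-1}ss^{-1} = s^{-1}$ by the inverse axiom, so $s^{-1} \in E(S)$ and hence $s \in E(S)$, which is exactly $E$-unitarity and closes the cycle.

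I expect the transitivity step to be the crux of the whole argument: it is precisely the point where the fact that $\sigma$ is a congruence (an equivalence relation), rather than merely a pairwise compatibility condition, becomes indispensable, and it is what rescues the direction that the direct ``$e \preceq s \Rightarrow e \sim s \Rightarrow s \in E(S)$'' reasoning cannot reach on its own.
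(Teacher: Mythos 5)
Your proof is correct, but there is nothing in the paper to compare it against: the paper states this theorem as a quoted result, citing \cite[Theorem 2.4.6]{lawson1998inverse}, and gives no proof of it. So your argument should be judged on its own, and it holds up. The two preliminary facts are right ($u = ab^{-1}b$ is a common lower bound of $a$ and $b$ when $a \sim b$; idempotents are pairwise compatible and $\sigma$-related since $ef = fe$ lies below both), the legs (i)$\Rightarrow$(iii) and (iii)$\Rightarrow$(ii) are clean, and your diagnosis of (ii)$\Rightarrow$(i) is exactly the right one: $e \sim s$ alone does not force $s \in E(S)$ (the symmetric inverse monoid gives counterexamples with $e \preceq s$, $e \sim s$, $s \notin E(S)$), and transitivity of $\sim$ --- available precisely because under (ii) it equals the equivalence relation $\sigma$ --- is what lets you pass from $s \sim e$ and $e \sim ss^{-1}$ to $s \sim ss^{-1}$, whence $s^{-1} = s^{-1}ss^{-1} \in E(S)$ and so $s \in E(S)$. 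One small point to patch: in your first fact you assert $u = (ab^{-1})b \preceq b$ because $ab^{-1} \in E(S)$, which uses the \emph{left}-handed description of the natural partial order, whereas the paper defines $s \preceq t$ by \emph{right} multiplication, $s = tf$ with $f \in E(S)$. The two descriptions are equivalent (this is standard, e.g.\ Lemma 1.4.6 of \cite{lawson1998inverse}), or you can verify it directly here: $u = b\bigl(b^{-1}(ab^{-1})b\bigr)$ since the idempotents $bb^{-1}$ and $ab^{-1}$ commute, and $b^{-1}(ab^{-1})b \in E(S)$; either way, one sentence is needed. For comparison, Lawson's own proof runs the cycle (i)$\Rightarrow$(ii)$\Rightarrow$(iii)$\Rightarrow$(i), which makes the last leg trivial ($e \preceq s$ gives $s \in \sigma(e) = E(S)$ at once) and concentrates the transitivity trick in (ii)$\Rightarrow$(iii), via $s \sim s^{-1}s$ and $s(s^{-1}s)^{-1} = s \in E(S)$; your arrangement (i)$\Rightarrow$(iii)$\Rightarrow$(ii)$\Rightarrow$(i) instead makes (i)$\Rightarrow$(iii) the easy leg and absorbs the same trick into (ii)$\Rightarrow$(i). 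Same ideas, different ordering of the cycle; neither is materially shorter.
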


 Now assume that $R$ is a commutative ring and that $A$ is an $R$-algebra. Let $B \subseteq A$ a subset. We denote $B \triangleleft_u A$ to indicate that $B$ is a unital (bilateral) ideal of $A$ with identity $1_B$, which is a central idempotent in $A$; that is, $B = A1_B = 1_BA$. We define the set:  \begin{center}$\text{Iso}_{pu}(A) = \{ f : I \to J \mid I,J \triangleleft_u A \text{ and } f \text{ is an isomorphism of unital rings} \}$.\end{center} This set forms an inverse semigroup with composition between maps defined as follows:
 \begin{align*}
    fg = f|_{\text{im}(g) \cap \text{dom}(f)} \circ g|_{g^{-1}(\text{im}(g) \cap \text{dom}(f))}.
 \end{align*}

\begin{lemma}\label{lem-compatib}
Let $f, g \in \emph{\text{Iso}}_{pu}(A)$. Then $f \sim g$ if and only if
\begin{align*}
    f|_{\emph{\text{dom}}(f) \cap \emph{\text{dom}}(g)} = g|_{\emph{\text{dom}}(f) \cap \emph{\text{dom}}(g)} \text{ and } f^{-1}|_{\emph{\text{im}}(f) \cap \emph{\text{im}}(g)} = g^{-1}|_{\emph{\text{im}}(f) \cap \emph{\text{im}}(g)}.
\end{align*}
\end{lemma}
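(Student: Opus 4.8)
The plan is to prove Lemma 1 by unwinding the definition of the compatibility relation $f \sim g$ directly in terms of the inverse semigroup $\text{Iso}_{pu}(A)$. By definition, $f \sim g$ means that both $f^{-1}g$ and $fg^{-1}$ lie in $E(\text{Iso}_{pu}(A))$. The first step is to identify the idempotents of this inverse semigroup concretely: an element of $\text{Iso}_{pu}(A)$ is idempotent precisely when it is the identity map $\text{id}_I$ on some unital ideal $I \triangleleft_u A$. This is because $E(S) = \{s^{-1}s : s \in S\}$ for any inverse semigroup, and here $f^{-1}f$ is the identity on $\text{dom}(f)$; more directly, if $h \in \text{Iso}_{pu}(A)$ satisfies $h^2 = h$ then on $\text{dom}(h^2) = h^{-1}(\text{im}(h) \cap \text{dom}(h))$ the map $h$ must fix points, forcing $h$ to be an identity map on a unital ideal.

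Next I would compute the composite $f^{-1}g$ explicitly using the given composition formula. Writing $D = \text{dom}(g)$ and applying the definition, $f^{-1}g$ is the map sending $x$ in the appropriate subideal to $f^{-1}(g(x))$, with domain $g^{-1}(\text{im}(g) \cap \text{dom}(f^{-1})) = g^{-1}(\text{im}(g) \cap \text{im}(f))$. The assertion that $f^{-1}g$ is idempotent, i.e. an identity map, then says exactly that $f^{-1}(g(x)) = x$ wherever this composite is defined, which is the condition that $g(x) = f(x)$ on the overlap of domains on which both are relevant. Carefully chasing the ideals, the condition $f^{-1}g \in E$ unpacks to $f|_{\text{dom}(f) \cap \text{dom}(g)} = g|_{\text{dom}(f) \cap \text{dom}(g)}$. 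Symmetrically, computing $fg^{-1}$ and imposing idempotency yields the condition on the inverses, namely $f^{-1}|_{\text{im}(f) \cap \text{im}(g)} = g^{-1}|_{\text{im}(f) \cap \text{im}(g)}$.

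For the converse, I would assume the two restriction equalities and verify that $f^{-1}g$ and $fg^{-1}$ are genuinely identity maps on their respective domains, hence idempotent, giving $f \sim g$. The main bookkeeping obstacle will be tracking the domains and images through the composition formula: one must check that the domain of $f^{-1}g$ is exactly a unital ideal contained in both $\text{dom}(g)$ and (after applying $g$) inside $\text{im}(f)$, and confirm that the equality of restrictions on $\text{dom}(f) \cap \text{dom}(g)$ matches up with the domain on which the composite actually lives. The key reconciliation is that if $g(x) = f(x)$ for $x \in \text{dom}(f) \cap \text{dom}(g)$, then $g(x) \in \text{im}(f)$ automatically, so the domain of the composite $f^{-1}g$ coincides with $\text{dom}(f) \cap \text{dom}(g)$, and on it the composite is the identity. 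I expect no deep difficulty beyond this careful matching of domains and images; the result is essentially a translation of the abstract compatibility relation into the concrete partial-isomorphism setting, and the standard description of $\sim$ for symmetric inverse monoids of partial bijections serves as a reliable guide.
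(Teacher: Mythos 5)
Your overall route is the same as the paper's: identify the idempotents of $\text{Iso}_{pu}(A)$ as identity maps on unital ideals, compute the domains of $f^{-1}g$ and $fg^{-1}$ from the composition formula, and translate idempotency into equality of restrictions. However, two of your concrete claims are wrong, and they sit exactly at the delicate point of the lemma. First, your pairing of composites with conditions is swapped: idempotency of $f^{-1}g$ does \emph{not} unpack to $f = g$ on $\text{dom}(f) \cap \text{dom}(g)$; it unpacks to $f^{-1} = g^{-1}$ on $\text{im}(f) \cap \text{im}(g)$, while it is $fg^{-1} \in E$ that gives agreement on the common domain (this is the pairing the paper uses). For a counterexample to your version, take $A = \mathbb{C} \times \mathbb{C}$, $I_1 = \mathbb{C} \times 0$, $I_2 = 0 \times \mathbb{C}$, $f = \text{id}_{I_1}$ and $g : I_1 \to I_2$, $(a,0) \mapsto (0,a)$. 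Then $\text{dom}(f^{-1}g) = g^{-1}(\text{im}(f) \cap \text{im}(g)) = g^{-1}(0) = 0$, so $f^{-1}g$ is the identity of the zero ideal, hence idempotent, yet $f$ and $g$ disagree everywhere on $\text{dom}(f) \cap \text{dom}(g) = I_1$. What is true is that $f^{-1}(g(x)) = x$ forces $x \in \text{dom}(f)$ and $f(x) = g(x)$, but only for $x$ in $g^{-1}(\text{im}(f) \cap \text{im}(g))$, and this ideal need not contain $\text{dom}(f) \cap \text{dom}(g)$.

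Second, in the converse direction your ``key reconciliation'' --- that $f = g$ on $\text{dom}(f) \cap \text{dom}(g)$ already forces $\text{dom}(f^{-1}g)$ to coincide with $\text{dom}(f) \cap \text{dom}(g)$ --- is false: it gives only the inclusion $\text{dom}(f) \cap \text{dom}(g) \subseteq \text{dom}(f^{-1}g)$. Take $A = \mathbb{C}^3$, with $f : \mathbb{C} \times 0 \times 0 \to 0 \times 0 \times \mathbb{C}$ and $g : 0 \times \mathbb{C} \times 0 \to 0 \times 0 \times \mathbb{C}$ the obvious isomorphisms; the hypothesis $f = g$ on $\text{dom}(f) \cap \text{dom}(g) = 0$ holds vacuously, but $\text{dom}(f^{-1}g) = 0 \times \mathbb{C} \times 0 \neq 0$ and $f^{-1}g$ (sending $(0,a,0)$ to $(a,0,0)$) is not idempotent. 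To prove that $f^{-1}g$ is an identity map you must invoke the \emph{second} hypothesis, as the paper does: for $x \in \text{dom}(f^{-1}g)$ one has $g(x) \in \text{im}(f) \cap \text{im}(g)$, hence $f^{-1}(g(x)) = g^{-1}(g(x)) = x$; dually, the first hypothesis is what makes $fg^{-1}$ an identity map. This is the real content of the lemma: agreement of $f$ and $g$ on the common domain does not imply agreement of $f^{-1}$ and $g^{-1}$ on the common image, so each composite controls exactly one of the two conditions and both are needed. With the two pairings interchanged and each hypothesis applied to the correct composite, your outline becomes a correct proof, essentially identical to the paper's.
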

\begin{proof}
    Assume that $f|_{\text{dom}(f) \cap \text{dom}(g)} = g|_{\text{dom}(f) \cap \text{dom}(g)} \text{ and } f^{-1}|_{\text{im}(f) \cap \text{im}(g)} = g^{-1}|_{\text{im}(f) \cap \text{im}(g)}.$
    Given arbitrary \(x \in \text{dom} \, f \cap \text{dom} \, g\) and \(y \in \text{im} \, f \cap \text{im} \, g\), we have:  
\[
f^{-1}g(x) = f^{-1}f(x) = x \quad \text{and} \quad fg^{-1}(y) = ff^{-1}(y) = y.
\]

Therefore, \(f^{-1}g\) and \(fg^{-1}\) are identity functions, i.e., idempotent. Consequently, \(f \sim g\). Conversely, if \(f \sim g\), then \(f^{-1}g\) and \(fg^{-1}\) are idempotent in \(\text{Iso}_{pu}(A)\), meaning they are identity functions. Thus,  
\[
f^{-1}g(x) = x, \,\, \text{for all} \,\, x \in \text{dom}(f^{-1}g) = g^{-1}(\text{dom}(f^{-1}) \cap \text{im}(g)) = g^{-1}(\text{im}(f) \cap \text{im}(g)),
\]
and  
\[
fg^{-1}(y) = y, \,\, \text{for all} \,\, y \in \text{dom}(fg^{-1}) = g(\text{dom}(f) \cap \text{im}(g^{-1})) = g(\text{dom}(f) \cap \text{dom}( g)).
\]

Then, if \(x \in \text{dom}(f^{-1}g)\), there exists \(x' \in \text{im}(f) \cap \text{im}(g)\) such that \(g^{-1}(x') = x\). Therefore,  
\[
g^{-1}(x') = x = f^{-1}g(x) = f^{-1}g(g^{-1}(x')) = f^{-1}(x'),
\]
which means that $f^{-1}|_{\text{im}(f) \cap \text{im}(g)} = g^{-1}|_{\text{im}(f) \cap \text{im}(g)}.$

Similarly, if \(y \in \text{dom}(fg^{-1})\), there exists \(y' \in \text{dom} \, f \cap \text{dom} \, g\) such that \(g(y') = y\). Hence,  
\[
g(y') = y = fg^{-1}(y) = fg^{-1}(g(y')) = f(y'),
\]
which implies that  $f|_{\text{dom}(f) \cap \text{dom}(g)} = g|_{\text{dom}(f) \cap \text{dom}(g)}$.
\end{proof}

Using the above lemma, we can prove that  $\text{Iso}_{pu}(A)$ is $f$-complete.

\begin{prop}
    $\emph{\text{Iso}}_{pu}(A)$ is $f$-complete.
\end{prop}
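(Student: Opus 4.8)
The plan is to realize the join of a finite compatible family as the \emph{amalgamation} of its members into a single partial isomorphism, defined on the sum of their domains. Let $\{f_1, \dots, f_n\} \subseteq \text{Iso}_{pu}(A)$ be a finite compatible set (so $f_i \sim f_j$ for all $i,j$), and write $e_i = 1_{\text{dom}(f_i)}$ and $e_i' = 1_{\text{im}(f_i)}$ for the associated central idempotents. First I would pass to the finite Boolean algebra of central idempotents generated by $e_1,\dots,e_n$, whose atoms are the orthogonal central idempotents
$$ g_T = \prod_{i \in T} e_i \prod_{j \notin T} (1 - e_j), \qquad T \subseteq \{1,\dots,n\},$$
satisfying $\sum_T g_T = 1$ and $\sum_{T \ni i} g_T = e_i$. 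Since each $A g_T$ is a two-sided ring summand of $A$ and $g_T \preceq e_i$ exactly when $i \in T$, I can define a candidate $f$ on the unital ideal $I = A e$, where $e = e_1 \vee \cdots \vee e_n = \sum_{T \ne \emptyset} g_T$, by the piecewise rule $f(x) = \sum_{\emptyset \ne T} f_{i(T)}(g_T x)$, choosing for each nonempty $T$ an index $i(T) \in T$. Lemma \ref{lem-compatib} is precisely what makes this independent of the choices: for $i, j \in T$ one has $A g_T \subseteq A e_i e_j = \text{dom}(f_i) \cap \text{dom}(f_j)$, where compatibility forces $f_i$ and $f_j$ to coincide.

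Next I would verify that $f$ is a unital ring isomorphism onto the unital ideal $A e'$, with $e' = e_1' \vee \cdots \vee e_n'$. Additivity is immediate from the decomposition $A e = \bigoplus_{T \ne \emptyset} A g_T$. Writing $h_T = f_{i(T)}(g_T)$, I would compute $\text{im}(f) = \sum_T A h_T$ and, using $\sum_{T \ni i} h_T = f_i(e_i) = e_i'$, identify $\sum_T h_T$ with $e'$, so that $f$ is unital. The inverse of $f$ is produced by the symmetric amalgamation of $f_1^{-1}, \dots, f_n^{-1}$, which are again compatible by the symmetric form of Lemma \ref{lem-compatib}; checking that these two maps are mutually inverse yields bijectivity.

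The main obstacle is multiplicativity, which reduces to the orthogonality $h_T h_{T'} = 0$ for $T \ne T'$. When $T \cap T' \ne \emptyset$ this is clear, since one may compute both $h_T$ and $h_{T'}$ through a common $f_k$ and invoke $g_T g_{T'} = 0$; the delicate case is $T \cap T' = \emptyset$, where $h_T$ and $h_{T'}$ are defined through different members $f_i$ and $f_j$. Here I would observe that $p := h_T h_{T'} \preceq e_i' e_j'$ lies in $\text{im}(f_i) \cap \text{im}(f_j)$, so that the compatibility of $f_i^{-1}$ and $f_j^{-1}$ on that intersection applies; transporting $p$ back along $f_i^{-1} = f_j^{-1}$ produces a central idempotent below $g_T g_{T'} = 0$, and injectivity of $f_i^{-1}$ then forces $p = 0$. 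This is the one step where the compatibility of both the $f_i$ and the $f_i^{-1}$ is genuinely needed, and I expect it to carry the technical weight of the proof.

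Finally, that $f$ is the join follows cleanly. By construction $f|_{A e_i} = f_i$, so $f_i = f \cdot \mathrm{id}_{A e_i} \preceq f$ and $f$ is an upper bound; and if $h$ is any upper bound, then $\text{dom}(h) \supseteq \sum_i A e_i = A e$ and $h$ agrees with each $f_i$, hence by additivity with $f$, on all of $A e$, giving $f = h \cdot \mathrm{id}_{A e} \preceq h$. Thus $f = \bigvee_i f_i$ exists, and since the family was an arbitrary finite compatible subset, $\text{Iso}_{pu}(A)$ is $f$-complete.
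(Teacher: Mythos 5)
Your proof is correct, but it follows a genuinely different route from the paper's. The paper constructs the join of two compatible elements $f,g$ as the sum $f+g \colon I_f + I_g \to J_f + J_g$, $(f+g)(x+y) = f(x)+g(y)$, proving multiplicativity by a direct computation in which the cross terms $f(x_1)g(y_2)$ do \emph{not} vanish but are matched on both sides via Lemma \ref{lem-compatib}, and then reaches an arbitrary finite compatible set by showing that $h \sim f+g$ whenever $h$ is compatible with $f$ and $g$, followed by induction. You instead handle all $n$ elements at once: you refine the domains into the orthogonal atoms $g_T$ of the Boolean algebra generated by the idempotents $1_{\text{dom}(f_i)}$, define the candidate piecewise (compatibility entering as independence of the choice of $i(T)$), and reduce multiplicativity to the orthogonality $h_T h_{T'} = 0$ of the transported atoms, which you establish by pulling $p = h_T h_{T'}$ back along $f_{i(T)}^{-1} = f_{i(T')}^{-1}$ --- compatibility of the inverses --- to an idempotent below $g_T g_{T'} = 0$; that argument is sound (note it implicitly uses that each $h_T$, being central in the unital ideal $\text{im}(f_{i(T)})$, is central in $A$). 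Both approaches lean on both halves of Lemma \ref{lem-compatib}; yours trades the paper's induction (whose key step, that compatibility is preserved by binary joins, has independent interest) for an explicit one-shot formula that makes the direct-sum structure of the join transparent. One caveat: the statement concerns an arbitrary $R$-algebra $A$, which the paper does not assume unital (only the ideals carry identities), so $\sum_T g_T = 1$ and the factors $1 - e_j$ are not literally elements of $A$; this is harmless, since for $T \neq \emptyset$ the atom $g_T$ expands by inclusion--exclusion into an honest element of $A$ and only nonempty $T$ enter your formula, but the argument should be phrased that way (or carried out in the unitalization of $A$).
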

\begin{proof}
 Let $f: I_f \to J_f$ and $g: I_g \to J_g$ in $\text{Iso}_{pu}(A)$ such that $f \sim g$. Define the map $f + g : I_f + I_g \to J_f + J_g$ by $(f + g)(x + y) = f(x) + g(y)$, for $x \in I_f$ and $y \in I_g$. Observe that $f + g$ is well-defined because, by Lemma \ref{lem-compatib},  \(f \sim g\) implies that \(f\) and \(g\) coincide on \(I_f \cap I_g\), and \(f^{-1}\) and \(g^{-1}\) coincide on \(J_f \cap J_g\). Hence, we have $f^{-1} + g^{-1} = (f + g)^{-1}$, and $f + g$ is a bijection. Next, we will demonstrate that $f + g$ is a ring isomorphism. The additivity is straightforward, so we will focus on proving that $f + g$ preserves multiplication. Let $x_1, x_2 \in I_f$ and $y_1,y_2 \in I_g$. Then:
{\small\begin{align*}
(f+g)((x_1+y_1)(x_2+y_2)) & = (f+g)(x_1x_2 + x_1y_2 + y_1x_2 + y_1y_2)\\
& = f(x_1x_2 + x_1y_2) + g(y_1x_2 + y_1y_2)\\
& = f(x_1)f(x_2) + f(x_1y_2)f(1_{I_f}1_{I_g}) + g(y_1x_2)g(1_{I_f}1_{I_g}) + g(y_1)g(y_2)\\
& = f(x_1)f(x_2) + f(x_1)f(y_21_{I_f}1_{I_g}) + g(y_1)g(x_21_{I_f}1_{I_g}) + g(y_1)g(y_2)\\
& = f(x_1)f(x_2) + f(x_1)g(y_21_{I_f}1_{I_g}) + g(y_1)f(x_21_{I_f}1_{I_g}) + g(y_1)g(y_2)\\
& = f(x_1)f(x_2) + f(x_1)g(y_2)g(1_{I_f}1_{I_g}) + g(y_1)f(x_2)f(1_{I_f}1_{I_g}) + g(y_1)g(y_2)\\
& = f(x_1)f(x_2) + f(x_1)g(y_2)1_{J_f}1_{J_g} + g(y_1)f(x_2)1_{J_f}1_{J_g} + g(y_1)g(y_2)\\
& = f(x_1)f(x_2) + f(x_1)g(y_2) + g(y_1)f(x_2) + g(y_1)g(y_2)\\
& = (f+g)(x_1+y_1)(f+g)(x_2+y_2).
\end{align*}}

% \begin{align*}
% (f+g)((x_1+y_1)+(x_2+y_2)) & = (f+g)((x_1+x_2)+(y_1+y_2)) = f(x_1+x_2)+g(y_1+y_2)\\
% & = f(x_1)+f(x_2)+g(y_1)+g(y_2)\\
% & = f(x_1)+g(y_1)+f(x_2)+g(y_2) \\
% & = (f+g)(x_1+y_1)+(f+g)(x_2+y_2).
% \end{align*}

Also, the finite sum of unital ideals remains a unital ideal, concluding that $f+g \in \text{Iso}_{pu}(A)$. Furthermore, it is straightforward to verify that $f \vee g = f + g$.
 
 This construction can be extended to a finite number of compatible elements in $\text{Iso}_{pu}(A)$ as follows: Given isomorphisms $f: I_f \to J_f$, $g: I_g \to J_g$, and $h : I_h \to J_h$ in $\text{Iso}_{pu}(A)$ such that $f,g,h$ are $\sim$-related, we will show that $h$ is $\sim$-related to $f + g$. Notice that $\text{dom}(f+g) = I_f + I_g$, hence $I_h \cap \text{dom}(f+g) = I_h \cap (I_f + I_g)$. Since these ideals are unital, we have that $I_h \cap (I_f + I_g) = I_h(I_f + I_g) = I_hI_f + I_hI_g = (I_h \cap I_f) + (I_h \cap I_g)$. Consider $x + y \in (I_h \cap I_f) + (I_h \cap I_g)$, where $x \in I_h \cap I_f$ and $y \in I_h \cap I_g$. Then $h(x) = f(x)$, since $h \sim f$, and $h(y) = g(y)$, since $h \sim g$. Thus, we have:
 \begin{align*}
    h(x+y) = h(x) + h(y) = f(x) + g(y) = (f+g)(x+y),
 \end{align*}
proving that $h$ and $f + g$ coincide on the intersection of their domains. Similarly, we prove that \(h^{-1}\) and \(f^{-1} + g^{-1}\) coincide on the intersection of their domains. Consequently, by Lemma \ref{lem-compatib}, $h \sim (f + g)$. By symmetry,  we can also conclude that  $f \sim g + h$ and $g \sim f + h$, ensuring that $f + g + h$ is well-defined. The statement then follows by induction.
\end{proof}

Let $S$ be a finite $E$-unitary inverse subsemigroup of $\text{Iso}_{pu}(A)$. In an $E$-unitary semigroup, the compatibility relation coincides with the minimum group congruence, as established in Theorem \ref{teosigmaigsim}. Let $s \in S$. Then the finite set $\sigma(s)$ consists of pairwise compatible elements in $S$. Once $\text{Iso}_{pu}(A)$ is $f$-complete, we can define the element $\alpha_s = \sum_{g \in \sigma(s)} g \in \text{Iso}_{pu}(A)$. Let $G' = \{\alpha_s : s \in S\} \subseteq \text{Iso}_{pu}(A)$.

We adapt \cite[Lemmas 7.2.1, 7.2.2]{lawson1998inverse} for $G' = \{\alpha_s : s \in S\} \subseteq \text{Iso}_{pu}(A)$.

\begin{lemma} \label{lemaparceunit1}
Following the notations defined above, we have:
    \begin{enumerate}
        \item[(i)] For all $\alpha, \beta \in G'$, there exists a unique $\gamma_{(\alpha,\beta)} \in G'$ such that $\alpha \beta \preceq \gamma_{(\alpha,\beta)}$.

        \item[(ii)] The element $\alpha_e$ is idempotent in $G'$, for any $e \in E(S)$.

        \item[(iii)] If $\alpha \in G'$, then $\alpha^{-1} \in G'$.
    \end{enumerate}
\end{lemma}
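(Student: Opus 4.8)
The plan is to exploit the hypothesis that $S$ is $E$-unitary: by Theorem \ref{teosigmaigsim} we have $\sim\, =\, \sigma$, so each class $\sigma(s)$ is a finite set of pairwise compatible elements and $\alpha_s = \bigvee \sigma(s)$ is precisely its join in the $f$-complete semigroup $\text{Iso}_{pu}(A)$. The assignment $s \mapsto \alpha_s$ then depends only on $\sigma(s)$, so $G'$ is naturally indexed by the group $S/\sigma$ of Theorem \ref{teoconggrp1}. Throughout I would rely on two features of the join construction from the previous proposition: that multiplication and inversion distribute over finite compatible joins, i.e. $(\bigvee X)(\bigvee Y) = \bigvee_{x \in X,\, y \in Y} xy$ and $(\bigvee X)^{-1} = \bigvee_{x \in X} x^{-1}$ when the joins exist, the latter being already visible in the identity $(f+g)^{-1} = f^{-1}+g^{-1}$ proved there. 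I would also use two elementary order facts valid in any inverse semigroup: that $x \preceq y$ implies $x \sigma y$, and that $x, y \preceq z$ implies $x \sim y$ (both short idempotent computations).

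For (i), existence: since $\sigma$ is a congruence, $a \in \sigma(s)$ and $b \in \sigma(t)$ force $ab \in \sigma(st)$, so each $ab \preceq \alpha_{st}$; distributing the product over the joins gives $\alpha_s\alpha_t = \bigvee_{a,b} ab \preceq \alpha_{st}$, and I take $\gamma_{(\alpha_s,\alpha_t)} = \alpha_{st}$. For uniqueness, suppose $\alpha_s\alpha_t \preceq \alpha_u$ for some $u \in S$. Since $s \preceq \alpha_s$ and $t \preceq \alpha_t$, monotonicity of multiplication gives $st \preceq \alpha_s\alpha_t \preceq \alpha_u$, while also $u \preceq \alpha_u$; as $st$ and $u$ both lie below $\alpha_u$, they are compatible, hence $\sigma$-related. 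Because $st, u \in S$ and idempotency is intrinsic, this compatibility already holds in $S$, so $\sigma(st) = \sigma(u)$ and therefore $\alpha_u = \alpha_{st}$.

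For (ii), Theorem \ref{teosigmaigsim}(iii) gives $\sigma(e) = E(S)$ for every $e \in E(S)$, so $\alpha_e = \bigvee E(S)$ is independent of the chosen $e$ and is a join of pairwise compatible idempotents. Distributing once more, $\alpha_e^2 = \bigvee_{f,f' \in E(S)} ff' \preceq \alpha_e$, while $f = ff \preceq \alpha_e^2$ for each $f \in E(S)$ yields $\alpha_e \preceq \alpha_e^2$; hence $\alpha_e^2 = \alpha_e$. For (iii), writing $\alpha = \alpha_s$, distributivity of inversion gives $\alpha_s^{-1} = \bigvee_{a \in \sigma(s)} a^{-1}$; since $S/\sigma$ is a group and $\sigma$ a congruence, $a \in \sigma(s)$ if and only if $a^{-1} \in \sigma(s^{-1})$, so the indexing set is exactly $\sigma(s^{-1})$ and $\alpha_s^{-1} = \alpha_{s^{-1}} \in G'$.

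I expect the genuine work to lie in justifying the distributive laws over finite compatible joins in $\text{Iso}_{pu}(A)$, which underpin every step, and in being careful that all the $\sigma$-class identifications are read off \emph{inside} $S$, where it is $E(S)$ rather than the larger idempotent set of $\text{Iso}_{pu}(A)$ that matters. The order-theoretic lemmas (that a common upper bound forces compatibility, and that $\preceq$ implies $\sigma$-relatedness) are routine, but they must be in place before the uniqueness argument in (i) can be run.
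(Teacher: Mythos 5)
Your proof is correct, and while it follows the paper's overall skeleton---taking $\gamma_{(\alpha_s,\alpha_t)} = \alpha_{st}$ and proving uniqueness via a common upper bound forcing compatibility, hence $\sigma$-relatedness by $E$-unitarity---the way you establish the key inequality $\alpha_s\alpha_t \preceq \alpha_{st}$ is genuinely different. The paper works concretely inside $\text{Iso}_{pu}(A)$: it takes $x \in \text{dom}(\alpha_f\alpha_g)$, decomposes it as a sum of elements in the domains of the members of $\sigma(g)$, tracks images through $\alpha_g$ and then $\alpha_f$, concludes $x \in \text{dom}(\alpha_{fg})$, and only at the end uses $\sigma(fg) = \sigma(f)\sigma(g)$ to match values. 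You instead argue purely order-theoretically: since $\sigma$ is a congruence, $ab \in \sigma(st)$ for $a \in \sigma(s)$ and $b \in \sigma(t)$, so $ab \preceq \alpha_{st}$, and distributivity of multiplication over finite compatible joins gives $\alpha_s\alpha_t = \bigvee_{a,b} ab \preceq \alpha_{st}$. This is shorter, closer in spirit to Lawson's Lemmas 7.2.1--7.2.2 (which the paper says it adapts), and it lets you prove (ii) and (iii) explicitly, whereas the paper simply defers those to Lawson. The price, which you correctly flag as the real outstanding work, is the distributive laws themselves: the paper's $f$-completeness proposition only constructs joins as sums $f+g$ and proves $(f+g)^{-1} = f^{-1}+g^{-1}$, so the identity $\left(\bigvee X\right)\left(\bigvee Y\right) = \bigvee_{x,y} xy$ must be supplied separately. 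It does hold---either by the general fact that multiplication in any inverse semigroup distributes over existing joins, or by direct verification in $\text{Iso}_{pu}(A)$ using that unital ideals satisfy $(I_f+I_g)\cap J = (I_f\cap J)+(I_g\cap J)$---so your argument is complete once that lemma is recorded. Your care in reading compatibility and idempotency intrinsically in $S$ rather than in the larger semigroup $\text{Iso}_{pu}(A)$ matches the paper's uniqueness step exactly.
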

\begin{proof}
    (i): Firstly, observe that $f \preceq g$ in $\text{Iso}_{pu}(A)$ is equivalent to $f$ being a restriction of $g$ in the usual sense of functions.

    Let $\alpha = \alpha_f$ and $\beta = \alpha_g$ for some $f,g \in S$. Define $\gamma_{(\alpha,\beta)} = \alpha_{fg}$. We will prove that $\text{dom}(\alpha_f\alpha_g) \subseteq \text{dom}(\alpha_{fg})$ and that $\alpha_f\alpha_g(x) = \alpha_{fg}(x)$, for all $x \in \text{dom}(\alpha_f\alpha_g)$.  To this end, we denote $\{f_1, \ldots, f_n\} = \sigma(f)$ and $\{g_1, \ldots, g_m\} = \sigma(g)$.
    
    If $x \in \text{dom}(\alpha_f\alpha_g) = \alpha_g^{-1}(\text{dom}(\alpha_f) \cap \text{im}(\alpha_g))$, then $x \in \text{dom} (\alpha_g)$. This implies that $x = \sum_{i=1}^m x_i$, with $x_i \in \text{dom}(g_i)$, for $1 \leq i \leq m$. Moreover, $y = \alpha_g(x) = \sum_{i=1}^m g_i(x_i)$ is an element of dom$(\alpha_f)$. We can rewrite this sum if necessary, and we can assume that 
$y$ is expressed as $y = \sum_{i=1}^n y_k$ where $y_k \in \text{dom}(f_k)$ and \(y_k = \sum_{i \in I_k} g_i(x_i)\) for some index set \(I_k \subseteq \{1, \ldots, m\}\).  Thus, we have: \begin{align*}
        \alpha_f(y) & = \sum_{k=1}^n f_k(y_k) = \sum_{k=1}^n f_k \left( \sum_{i \in I_k} g_i(x_i) \right ) = \sum_{k=1}^n f_k \left( \alpha_g \left ( \sum_{i \in I_k} x_i \right) \right).
        \end{align*} 

    % & = \sum_{k=1}^n f_k \left( \alpha_g \left ( \sum_{i \in I_k} x_i \right) \right)  = \sum_{k=1}^n \left( f_k\alpha_g \left ( \sum_{i \in I_k} x_i \right) \right).
    
    Therefore, for each $1 \leq k \leq n$, $\sum_{i \in I_k} g_i(x_i)$ is an element of dom$(f_k)$. This means that $\sum_{i \in I_k} x_i$ is also an element of the domain of $\alpha_{fg}$ for all $1 \leq i \leq m$, since it is an element of the domain of $f_k\alpha_g$. Consequently, $x \in \text{dom}(\alpha_{fg})$. Now, since $\sigma(fg) = \sigma(f)\sigma(g)$, it is easy to see that $\alpha_{fg}(x)$ coincides with $\alpha_f \alpha_g (x)$. So $\alpha_{f}\alpha_g \preceq \alpha_{fg}$.

   For the uniqueness, assume that there is $\gamma' \in G'$ such that $\alpha_f\alpha_g \preceq \gamma'$. By the construction of $G'$, we have that $\gamma' = \alpha_h$, for some $h \in S$. Then $\alpha_f\alpha_g \preceq \gamma' = \alpha_h$ implies $fg \preceq \alpha_h$. Since $h \preceq \alpha_h$, we have $(fg) \sigma h$. Thus, we conclude that $\alpha_h = \alpha_{fg} = \gamma_{(\alpha,\beta)}$.

The proofs of parts (ii) and (iii) are analogous to the argument presented in \cite[Lemma 7.2.1]{lawson1998inverse}.
\end{proof}

Define a binary operation $\astrosun$: $G' \times G' \rightarrow G'$ by $\alpha \astrosun \beta = \gamma_{(\alpha,\beta)}$, where $\gamma_{(\alpha,\beta)}$ is the unique element in $G'$  that is greater than or equal to $\alpha \beta$, as established in Lemma \ref{lemaparceunit1}(i). Observe that $\alpha_e = \sum_{g \in \sigma(e)} g = \sum_{g \in E(S)} g$, where the last equality follows from Theorem \ref{teosigmaigsim}(iii). Consequently, if $e, f \in E(S)$, then $\alpha_e = \alpha_f$.

\begin{lemma} \label{lemaparceunit2}
    $(G',\astrosun)$ is a group isomorphic to $S / \sigma$; the identity is $\alpha_e$, $e \in E(S)$, and the inverse of $\alpha \in G'$ is the partial bijection $\alpha^{-1} \in G'$. 
\end{lemma}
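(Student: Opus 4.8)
The plan is to exhibit an explicit group isomorphism $\Phi \colon S/\sigma \to G'$ and to verify that the operation $\astrosun$ agrees with the group structure transported from $S/\sigma$. First I would define $\Phi(\sigma(s)) = \alpha_s$. This is well defined and injective essentially by construction: since $\alpha_s = \sum_{g \in \sigma(s)} g$ depends only on the $\sigma$-class of $s$, and conversely, because $S$ is $E$-unitary we have $\sigma = \sim$ by Theorem \ref{teosigmaigsim}, so the summands $g \in \sigma(s)$ are exactly the elements compatible with $s$; hence $\alpha_s = \alpha_t$ forces $\sigma(s) = \sigma(t)$. Surjectivity is immediate from the definition $G' = \{\alpha_s : s \in S\}$.

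Next I would check that $\Phi$ is a homomorphism, i.e. $\alpha_s \astrosun \alpha_t = \alpha_{st}$. But this is precisely what Lemma \ref{lemaparceunit1}(i) gives: $\gamma_{(\alpha_s,\alpha_t)}$ is the unique element of $G'$ above $\alpha_s \alpha_t$, and in the proof of that lemma it was shown that this unique element is $\alpha_{st}$. Since $\sigma$ is a congruence, $\sigma(s)\sigma(t) = \sigma(st)$, so the map respects the multiplication inherited from $S/\sigma$; by definition of $\astrosun$ this says $\alpha_s \astrosun \alpha_t = \alpha_{st} = \Phi(\sigma(s)\sigma(t))$.

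It then remains to confirm that $(G', \astrosun)$ really is a group with the claimed identity and inverses. Rather than verifying the group axioms for $\astrosun$ from scratch, I would argue that $\Phi$, being a bijection that intertwines $\astrosun$ with the group operation of $S/\sigma$, automatically transports the group structure: associativity, the existence of a two-sided identity, and of inverses all follow because $S/\sigma$ is a group by Theorem \ref{teoconggrp1}(ii). Concretely, the identity of $S/\sigma$ is $\sigma(e)$ for $e \in E(S)$, so its image $\alpha_e$ is the identity of $G'$, consistent with Lemma \ref{lemaparceunit1}(ii) and the observation that $\alpha_e = \alpha_f$ for all idempotents $e,f$. The inverse of $\sigma(s)$ is $\sigma(s^{-1})$, whose image is $\alpha_{s^{-1}} = (\alpha_s)^{-1}$ by Lemma \ref{lemaparceunit1}(iii), and one checks this agrees with the partial-bijection inverse since $\sigma(s^{-1}) = \sigma(s)^{-1}$.

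The main obstacle I anticipate is not any single axiom but the bookkeeping in the well-definedness and injectivity of $\Phi$: one must be careful that the sum $\sum_{g \in \sigma(s)} g$ is taken over a genuinely compatible finite set (so that the join exists by $f$-completeness) and that equality of two such joins in $\text{Iso}_{pu}(A)$ forces equality of the underlying $\sigma$-classes. This last point uses the $E$-unitary hypothesis crucially through the identification $\sigma = \sim$; without it the summands need not be compatible and the construction of $\alpha_s$ would break down. Once this identification is in hand, the homomorphism property and the transport of the group structure are formal consequences of Lemma \ref{lemaparceunit1} and Theorem \ref{teoconggrp1}.
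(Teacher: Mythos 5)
Your proposal is correct and follows essentially the same route as the paper: both exhibit the natural bijection between $G'$ and $S/\sigma$ (the paper writes it as $\Phi(\alpha_f)=\sigma(f)$, you write its inverse), verify compatibility with the operation, identity, and inverses via Lemma \ref{lemaparceunit1}(i)--(iii), and transport the group structure from $S/\sigma$ using Theorem \ref{teoconggrp1}(ii). Your added care about injectivity of the correspondence (via $\sigma=\sim$ from the $E$-unitary hypothesis) is a point the paper dismisses as ``evidently a bijection,'' so if anything your write-up is slightly more complete.
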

\begin{proof}
    Define $\Phi : (G', \astrosun) \to S / \sigma$ by $\Phi(\alpha_f) = \sigma(f)$. By Lemma \ref{lemaparceunit1}(i), we obtain that $\Phi(\alpha_f \astrosun \alpha_g) = \Phi(\alpha_{fg}) = \sigma(fg) = \sigma(f)\sigma(g) = \Phi(\alpha_f)\Phi(\alpha_g)$. Additionally, by Lemma \ref{lemaparceunit1}(iii), we have that $\Phi(\alpha_f^{-1}) = \Phi(\alpha_{f^{-1}}) = \sigma(f^{-1}) = \sigma(f)^{-1} = \Phi(\alpha_f)^{-1}$. Furthermore, for any $e \in E(S)$, it follows from Lemma \ref{lemaparceunit1}(ii) that $\Phi(\alpha_e) = 1_{S/\sigma}$. Moreover, $\Phi$ is evidently a bijection, as the elements of $(G',\astrosun)$ are defined by the $\sigma$-classes of $S$.
\end{proof}

\subsection{Actions of inverse semigroups and partial actions of groups} In this subsection, we present the definitions of an inverse semigroup action on a ring and a partial group action on a ring.

\begin{defi} Let $S$ be an inverse semigroup and $A$ be a ring.  An \emph{action of $S$ on} $A$ is a  collection $\beta = (A_s,\beta_s)_{s \in S}$ such that $A_s \triangleleft A$, $\beta_s : A_{s^{-1}} \to A_s$ is an isomorphism of $R$-algebras, for all $s \in S$, and
\begin{itemize}
\item[(i)] $A = \sum_{e \in E(S)} A_e$;
\item[(ii)] $\beta_s(\beta_t(a)) = \beta_{st}(a)$, for all $s,t \in S$ and $a \in A_{(st)^{-1}}=\beta_t^{-1}(A_{t} \cap A_{s^{-1}})$.
\end{itemize} \end{defi}

%\item[(iii)] If $S$ is unitary with identity $1_S$, then $A_{1_S} = A$.

The definition of an inverse semigroup action on a ring provided by Exel and Vieira in \cite[Definition 2.3]{exel2010actions} does not require that $A = \sum_{e \in E(S)} A_e$. However, Beuter, Gonçalves, Öinert, and Royer in \cite[Definition 2.1]{beuter2019simplicity} introduced the concept of a partial action of an inverse semigroup on a ring, incorporating the condition $A = \sum_{e \in E(S)} A_e$. This condition is natural as it considers rings “filled” by the ideals of the action. So, we adopt it in our paper. Also, it is not difficult to see that every global action of $S$ on $A$, as defined above, is a partial action of $S$ on $A$ (in the sense of Beuter et al).

\begin{obs}
From \cite[Proposition 2.2]{beuter2019simplicity}, it follows that $\beta_e = \text{id}_{A_e}$, for any $e \in A_e$, and if $S$ is an inverse semigroup with unit $1_S$, then $A_{1_S} = A$.
\end{obs}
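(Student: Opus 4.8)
The plan is to treat the two assertions of the remark separately, deriving each as a short consequence of the composition axiom (ii) together with the explicit description of the domains $A_{(st)^{-1}} = \beta_t^{-1}(A_t \cap A_{s^{-1}})$, which I read as a genuine identity of ideals. Throughout I would use the two standard facts about the idempotents of an inverse semigroup: that $e^{-1} = e$ for every $e \in E(S)$, and that idempotents commute. (I read the ``$e \in A_e$'' in the statement as the evident typo for $e \in E(S)$.)

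For the first assertion, $\beta_e = \text{id}_{A_e}$, I would first note that since $e^{-1} = e$ the map $\beta_e$ is an $R$-algebra automorphism of the single ideal $A_e = A_{e^{-1}}$. Applying axiom (ii) with $s = t = e$ gives $\beta_e(\beta_e(a)) = \beta_{e^2}(a) = \beta_e(a)$ for every $a$ in the domain $A_{(e^2)^{-1}}$, and a direct domain computation shows $A_{(e^2)^{-1}} = \beta_e^{-1}(A_e \cap A_{e^{-1}}) = \beta_e^{-1}(A_e) = A_e$, so the identity $\beta_e \circ \beta_e = \beta_e$ holds on all of $A_e$. Since $\beta_e$ is injective, applying $\beta_e^{-1}$ to both sides of $\beta_e(\beta_e(a)) = \beta_e(a)$ yields $\beta_e(a) = a$ for every $a \in A_e$, which is exactly $\beta_e = \text{id}_{A_e}$.

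For the second assertion, suppose $S$ has an identity $1_S$; since $1_S$ is idempotent, $1_S^{-1} = 1_S$. Because $A = \sum_{e \in E(S)} A_e$ by axiom (i), it suffices to prove $A_e \subseteq A_{1_S}$ for every $e \in E(S)$, and I would get this directly from the domain identity with $s = 1_S$ and $t = e$: since $1_S e = e$ we have $A_e = A_{(1_S e)^{-1}} = \beta_e^{-1}(A_e \cap A_{1_S^{-1}}) = \beta_e^{-1}(A_e \cap A_{1_S})$, and applying the bijection $\beta_e$ gives $A_e = \beta_e(A_e) = A_e \cap A_{1_S}$, whence $A_e \subseteq A_{1_S}$. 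Summing over all idempotents then forces $A = \sum_{e \in E(S)} A_e \subseteq A_{1_S} \subseteq A$, so $A_{1_S} = A$.

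The only delicate point, and the step I would be most careful about, is the bookkeeping of domains: every equality I use hinges on rewriting $A_{(st)^{-1}} = \beta_t^{-1}(A_t \cap A_{s^{-1}})$ and then simplifying it via $e^{-1} = e$. If one prefers to avoid invoking the domain identity in the second part, there is a parallel route through the auxiliary fact $A_{ef} = A_e \cap A_f$ for commuting idempotents (proved by applying axiom (ii) to $s=e$, $t=f$ and using the first assertion to replace $\beta_f^{-1}$ by the identity), after which $e = e\,1_S$ gives $A_e = A_{e\,1_S} = A_e \cap A_{1_S}$; but either way the crux is pinning down the ideals, and once that is done both assertions collapse to one-line applications of the injectivity and surjectivity of the $\beta$'s.
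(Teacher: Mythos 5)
Your proof is correct, but it takes a genuinely different route from the paper: the paper offers no argument at all for this remark, simply citing \cite[Proposition 2.2]{beuter2019simplicity}, which establishes these facts in the more general setting of \emph{partial} inverse semigroup actions (and applies here because every global action in the paper's sense is a partial action, as the authors note just below the definition). Your argument, by contrast, is a self-contained verification from the paper's own axioms, and both halves check out: for $\beta_e=\mathrm{id}_{A_e}$ you correctly use $e^{-1}=e$, the composition axiom with $s=t=e$, and injectivity of $\beta_e$; for $A_{1_S}=A$ you correctly combine $A=\sum_{e\in E(S)}A_e$ with the domain identity $A_{(1_Se)^{-1}}=\beta_e^{-1}(A_e\cap A_{1_S})$ to get $A_e\subseteq A_{1_S}$. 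The one structural point worth noting is that your second half leans essentially on reading the condition $A_{(st)^{-1}}=\beta_t^{-1}(A_t\cap A_{s^{-1}})$ as an \emph{equality} of ideals; that is exactly what the paper's definition of a (global) action asserts, so your proof is valid here, but under the weaker inclusion $\beta_t^{-1}(A_{s^{-1}}\cap A_t)\subseteq A_{(st)^{-1}}$ used for partial actions your argument for $A_e\subseteq A_{1_S}$ would collapse to a triviality (your first half, on the other hand, survives with only the inclusion). So the citation buys generality across the partial setting, while your proof buys transparency and independence from the external reference; for the remark as stated, either is sufficient.
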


An action is called \emph{injective} if $\beta_{s_1} = \beta_{s_2}$ implies $s_1 = s_2$, for all $s_1, s_2 \in S$. A \emph{unital action} of $S$ on $A$ is an action $\beta$ of $S$ on $A$ such that every ideal $A_s$, for $s \in S$, is generated by a central idempotent of $A$, denoted by $1_s$. Thus, $A_s = 1_sA = A1_s$. %A \emph{full action} of $S$ on $A$ is an action $\beta$ of $S$ on $A$ such that $A = \sum_{e \in E(S)} A_e$. Clearly, each action of a unitary inverse semigroup is a full action.

Observe that a unital action of  $S$ on $A$ can be interpreted as an inverse semigroup homomorphism $\beta : S \to \text{Iso}_{pu}(A)$ such that $A = \sum_{e \in E(S)} \text{im}(\beta(e))$. To establish this, define $\beta(s) := \beta_s $, $\text{dom}(\beta(s))  := A_{s^{-1}}$ and $\text{im}(\beta(s)) :=  A_{s}$, for each $s \in S$.

Next, we introduce the notion of partial action of a group on a ring. 

\begin{defi} \label{pagroup} \cite[p. 1932]{dokuchaev2005associativity} Let $G$ be a group and $A$ be a ring. A \emph{partial action of $G$ on $A$} is a collection $\alpha = (A_g,\alpha_g)_{g \in G}$ of ideals $A_g$ of $A$ and ring isomorphisms $\alpha_g : A_{g^{-1}} \to A_g$, $g \in G$, such that:
\begin{enumerate}
    \item[(P1)] $A_{1_G} = A$ and $\alpha_{1_G} = \text{Id}_A$;

    \item[(P2)] $\alpha_h^{-1}(A_{g^{-1}} \cap A_h) \subseteq A_{(gh)^{-1}}$, for all $g,h \in G$;

    \item[(P3)] $\alpha_g \circ \alpha_h(x) = \alpha_{gh}(x)$, for all $g,h \in G$, $x \in \alpha_h^{-1}(A_{g^{-1}} \cap A_h)$.
\end{enumerate}
\end{defi}

When the ideals are unital, we say that $\alpha$ is a \emph{unital partial action}.

\subsection{Galois extensions}
Let $S$ be a finite inverse semigroup acting on a ring $A$ via a unital action $\beta = (A_s,\beta_s)_{s \in S}$. We set
\begin{align*}
    A^\beta = \{a \in A : \beta_s(a1_{s^{-1}}) = a1_s, \text{ for all } s \in S\},
\end{align*}
the subring of invariants of $A$ by $\beta$. Notice that $A = \sum_{e \in E(S)} A_e$ implies that $A$ is a unital ring, with identity named $1_A$.

Let $A\supseteq B$ be a ring extension. The ring $A$ is said to be $B$-separable if there exists an element $e = \sum_{i = 1}^nx_i \otimes_{B} y_i \in A \otimes_{B} A$ such that $\sum_{i = 1}^nx_iy_i = 1_A$ and $(1_A \otimes a)e = (a \otimes 1_A)e$ for all $a \in A$ (c.f. \cite{HS}). This element $e$ is referred to as \emph{idempotent of separability of} $A$ over $A^{\beta}$. When $B$ is commutative, it is equivalent to state that $A$ is a projective ($A \otimes_{B} A^{op}$)-module, where $A^{op}$ denotes the opposite algebra of $A$ (c.f. \cite{chase1969galois}).

We say that $A$ is a $\beta$-Galois extension of $A^\beta$ if there exist $\{x_i,y_i\}_{i=1}^n$ elements of $A$ such that
    \begin{align*}
        \sum_{i=1}^n x_i\beta_s(y_i1_{s^{-1}}) = \sum_{e \in E(S)} 1_e\delta_{e,s}.
    \end{align*}
    
In this case, we say that $\{ x_i, y_i \}_{i=1}^n$ is a Galois coordinate system for the extension $A|_{A^\beta}$ (c.f. \cite{lata2021galois}).

We shall prove in Section 5 that there exists a one-to-one correspondence between the $A^\beta$-separable and $\beta$-strong subalgebras of $A$ and determined inverse subsemigroups of $S$. Thus, we need to define the concept of $\beta$-strong subalgebra. Let $B$ be an $A^\beta$-subalgebra of $A$. We set:
    \begin{align*}
        S_B = \{ s \in S : \beta_s(b1_{s^{-1}}) = b1_s, \text{ for all } b \in B \}.
    \end{align*}

\begin{defi}Let $B$ be an $A^\beta$-subalgebra of $A$. We say that $B$ is $\beta$-\emph{strong} if for all $s,t \in S$, with $s^{-1}t \notin S_B$, and for all nonzero idempotent $e \in A_s \cup A_t$, there exists an element $b \in B$ such that $\beta_s(b1_{s^{-1}})e \neq \beta_t(b1_{t^{-1}})e$.
\end{defi}

One of our main interests is to use partial actions of groups to solve the problem of the invariance of the trace map. To this end, we conclude this subsection by introducing the definition of a Galois extension for partial group actions.

Given a unital partial action $\alpha = (A_g,\alpha_g)_{g \in G}$ of the group $G$ on the ring $A$, we set $A^{\alpha} = \{a \in A : \alpha_g(a1_{g^{-1}}) = a1_g, \text{ for all } g \in G\}$, that is, the subring of invariants of $A$ by $\alpha$.

The ring $A$ is said to be a partial Galois extension of $A^{\alpha}$, or an $\alpha$-Galois extension of $A^{\alpha}$, if there exist elements $\{x_i,y_i\}_{i=1}^n$ in $A$ such that
$\sum_{i=1}^n x_i\alpha_g(y_i1_{g^{-1}}) = \delta_{1_{G},g},$ for each $g \in G$ (c.f. \cite{dokuchaev2007partial}).

\section{The trace map}
For the remainder of this paper, we assume that $S$ is a \textbf{finite} inverse semigroup. Let $\beta = (A_s,\beta_s)_{s \in S}$ be a unital action of $S$ on the ring $A$. We define the trace map $\tr_\beta : A \to A$ as $$\tr_\beta(a) = \sum_{s \in S} \beta_s(a1_{s^{-1}}).$$

If $S$ is, in particular, a group, it is easy to see that the trace map is invariant under $\beta$, that is, $\tr_\beta(A) \subseteq A^\beta$. However, this property does not necessarily hold for actions of inverse semigroups, as we illustrate in the following example.

\begin{exe}
Consider the inverse monoid $S = \{1_S, s, s^{-1}, ss^{-1}, s^{-1}s, t, tt^{-1}\}$ with relations $t = stt^{-1}$, $t = s^{-1}tt^{-1}$, $t = t^{-1}$ and $s^2 = s^{-2} = tt^{-1}$.

Let $A = \mathbb{C}e_1 \oplus \mathbb{C}e_2 \oplus \mathbb{C}e_3$, where $\mathbb{C}$ is the set of complex numbers and $e_1, e_2, e_3$ are central idempotents pairwise orthogonal such that $e_1 + e_2 + e_3 = 1_A$. Define $A_{1_S} = A$, $A_{s^{-1}} = \mathbb{C}e_1 \oplus \mathbb{C}e_2$, $A_s = \mathbb{C}e_2 \oplus \mathbb{C}e_3$, $A_t = \mathbb{C}e_2$. Also define $\beta_s(ae_1 + be_2) = \overline{b}e_2 + ae_3$, $\beta_{s^{-1}}(be_2 + ce_3) = ce_1 + \overline{b}e_2$, $\beta_t(be_2) = \overline{b}e_2$ and $\beta_e = \text{Id}_{A_e}$, for all $e \in E(S)$, where $\overline{b}$ indicates the complex conjugate of $b$. Then, $A^\beta = \mathbb{C}(e_1 + e_3) \oplus \mathbb{R}e_2$. However, $\tr_\beta(ie_2) = ie_2 \notin A^\beta$, where $i$ is the imaginary number. 
\end{exe}

To establish a bijective Galois correspondence, we need of the invariance of the trace map. Therefore, we must define another ``trace map" in such way that it is invariant. We shall work with (finite, as stated at the beginning of this section) $E$-unitary inverse semigroups. 

\begin{theorem} \label{teoconstracparcsigma}
    Let $S$ be a $E$-unitary inverse semigroup acting on $A$ via $\beta = (A_s,\beta_s)_{s \in S}$. Assume that $\beta$ is a unital injective action. Define
    \begin{align} \label{defacparc1}
        A_{\sigma(s)} = \sum_{t \in \sigma(s)} A_t
    \end{align}
    and
    \begin{align} \label{defacparc2}
        \alpha_{\sigma(s)} = \sum_{t \in \sigma(s)} \beta_t.
    \end{align}
Then $\alpha = (A_{g},\alpha_g)_{g \in G}$ is a unital partial action of the group $G = S / \sigma$ on $A$.
\end{theorem}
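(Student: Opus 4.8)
The plan is to recognize that the proposed collection is nothing but the group $(G',\astrosun)$ built in Lemma \ref{lemaparceunit2}, transported along $\beta$, and then to read off the partial-action axioms from Lemma \ref{lemaparceunit1}. First I would note that, being unital, $\beta$ is a homomorphism $\beta : S \to \text{Iso}_{pu}(A)$, and that injectivity makes its image $\beta(S)$ an inverse subsemigroup isomorphic to $S$; in particular $\beta(S)$ is finite and $E$-unitary. This is precisely what is needed to apply the apparatus of Lemmas \ref{lemaparceunit1} and \ref{lemaparceunit2} to $\beta(S)$. Since $\beta$ is an isomorphism onto its image it carries $\sigma$-classes to $\sigma$-classes, so $\sum_{t \in \sigma(s)} \beta_t$ is exactly the element $\alpha_{\beta_s}$ of the group associated with $\beta(S)$; hence $\{\alpha_{\sigma(s)} : s \in S\} = G'$ and, by Lemma \ref{lemaparceunit2}, $(G',\astrosun) \cong \beta(S)/\sigma \cong S/\sigma = G$.

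Next I would check that each $\alpha_g = \alpha_{\sigma(s)}$ is a well-defined isomorphism of unital ideals. Well-definedness is immediate, since both $A_{\sigma(s)}$ and $\alpha_{\sigma(s)}$ depend only on the class $g = \sigma(s)$. As $S$ is $E$-unitary we have $\sigma = \sim$ (Theorem \ref{teosigmaigsim}), so the elements of $\sigma(s)$ are pairwise compatible in $S$, and their images $\{\beta_t : t \in \sigma(s)\}$ are pairwise compatible in $\text{Iso}_{pu}(A)$ because a homomorphism sends compatible elements to compatible elements. By $f$-completeness their join $\sum_{t \in \sigma(s)} \beta_t$ exists and is a ring isomorphism with domain $\sum_{t \in \sigma(s)} A_{t^{-1}}$ and image $\sum_{t \in \sigma(s)} A_t$. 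Since $\{t^{-1} : t \in \sigma(s)\} = \sigma(s^{-1}) = g^{-1}$, this reads $\alpha_g : A_{g^{-1}} \to A_g$, and a finite sum of unital ideals is again unital, so the action is unital.

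Finally I would verify the three axioms of Definition \ref{pagroup}. For (P1), the identity $1_G = \sigma(e)$ with $e \in E(S)$ satisfies $\sigma(e) = E(S)$ (Theorem \ref{teosigmaigsim}(iii)), so $A_{1_G} = \sum_{t \in E(S)} A_t = A$ by axiom (i) of the action, and $\alpha_{1_G} = \sum_{t \in E(S)} \beta_t = \sum_{t \in E(S)} \text{id}_{A_t} = \text{id}_A$. Both (P2) and (P3) follow from Lemma \ref{lemaparceunit1}(i), which gives $\alpha_g \alpha_h \preceq \alpha_{gh}$, i.e. the partial composition $\alpha_g \circ \alpha_h$ is a restriction of $\alpha_{gh}$; taking domains yields $\alpha_h^{-1}(A_{g^{-1}} \cap A_h) = \text{dom}(\alpha_g \circ \alpha_h) \subseteq \text{dom}(\alpha_{gh}) = A_{(gh)^{-1}}$, which is (P2), while equality of the two maps on that common domain is (P3). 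I expect the principal difficulty to be bookkeeping rather than a single hard step: confirming that $\beta(S)$ inherits $E$-unitarity (where injectivity is essential) and that the index set $\{t^{-1} : t \in \sigma(s)\}$ matches the group indexing, so that the group law already established on $G'$ translates cleanly into the partial-action identities.
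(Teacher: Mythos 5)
Your proposal is correct and follows essentially the same route as the paper: identify $S$ with $\beta(S) \subseteq \text{Iso}_{pu}(A)$ via injectivity, invoke Lemmas \ref{lemaparceunit1} and \ref{lemaparceunit2} to get the group $G' \cong S/\sigma$, obtain (P1) from Theorem \ref{teosigmaigsim}(iii) together with axiom (i) of the action, and derive (P2)/(P3) from the relation $\alpha_g \circ \alpha_h \preceq \alpha_{gh}$ by passing to domains and restrictions. The only cosmetic difference is that the paper exhibits the identity of $A_{\sigma(s)}$ explicitly as a boolean (inclusion--exclusion) sum of the idempotents $1_{s_i}$, whereas you cite the fact that a finite sum of unital ideals is unital, which the paper itself established in the $f$-completeness proposition.
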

\begin{proof}
    By hypothesis, $\beta : S \to \text{Iso}_{pu}(A)$, $s \mapsto \beta_s$, is injective, so $S$ is isomorphic to its image $\beta(S)$ in $\text{Iso}_{pu}(A)$. Clearly, $\beta(S)$ is $E$-unitary. We will identify $S$ with its image $\beta(S)$ in $\text{Iso}_{pu}(A)$.
    
Let $G' = \{ \alpha_{\sigma(s)} : s \in S\} \subseteq \text{Iso}_{pu}(A)$. By Lemma \ref{lemaparceunit2}, $G'$ is a group with the operation $\astrosun$ and $(G',\astrosun) \simeq \beta(S) / \sigma \simeq S / \sigma = G$. Now, take $\alpha = (A_g,\alpha_g)_{g \in G}$ as defined in \eqref{defacparc1} and \eqref{defacparc2}. We will verify that $\alpha$ is a partial action of $G$ on $A$. To this end, observe that for $e \in E(S)$, $A_{\sigma(e)}= \sum_{f \in\sigma(e)} A_f \overset{\text{\tiny{Th.}} \ref{teosigmaigsim}(iii)}{=} \sum_{f \in E(S)} A_f = A$. Furthermore, $\alpha_{\sigma(e)} = \text{Id}_{A_{\sigma(e)}} = \text{Id}_A$. This proves (P1) of Definition \ref{pagroup}. 
    
     By Lemma \ref{lemaparceunit1}, $\alpha_g \circ \alpha_h \preceq \alpha_{gh}$. Then:
     \begin{align*}
         \alpha_h^{-1}(A_{g^{-1}} \cap A_h) = \text{dom}(\alpha_g \circ \alpha_h) \subseteq \text{dom}(\alpha_{gh}) = A_{(gh)^{-1}}
     \end{align*}
     and
     \begin{align*}
         \alpha_g \circ \alpha_h = \alpha_{gh}|_{\text{dom}(\alpha_g \circ \alpha_h)} = \alpha_{gh}|_{\alpha_h^{-1}(A_{g^{-1}} \cap A_h)},
     \end{align*}
     which are precisely (P2) and (P3) of Definition \ref{pagroup}.
     
     Let $s \in S$ and assume $\sigma(s) = \{s_1, \ldots, s_m\}$. Then, $A_{\sigma(s)} = \sum_{t \in \sigma(s)} A_t = \sum_{i=1}^m A_{s_i}$. Thus $A_{\sigma(s)}$ is a unital ring with identity given by the boolean sum 
    \begin{align*}
        1_{\sigma(s)} = \sum_{k=1}^m \sum_{i_1 \leq \cdots \leq i_k} (-1)^{k+1} 1_{s_{i_1}} \cdots 1_{s_{i_k}}.
    \end{align*}
\end{proof}

\begin{prop}
   Using the notations and assumptions of Theorem \ref{teoconstracparcsigma}, we have $A^\beta = A^{\alpha}$.
\end{prop}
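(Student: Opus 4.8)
The plan is to prove the two inclusions $A^\alpha\subseteq A^\beta$ and $A^\beta\subseteq A^\alpha$ separately, both resting on a single projection formula. The crux is the following identity: for every $s\in S$, every $t\in\sigma(s)$, and every $z\in\mathrm{dom}(\alpha_{\sigma(s)})=A_{\sigma(s^{-1})}$, one has
\begin{align*}
1_t\,\alpha_{\sigma(s)}(z)=\beta_t(z\,1_{t^{-1}}).
\end{align*}
To establish it I would use that $\alpha_{\sigma(s)}=\bigvee_{t\in\sigma(s)}\beta_t$, so each generator satisfies $\beta_t\preceq\alpha_{\sigma(s)}$; by the remark opening the proof of Lemma \ref{lemaparceunit1}(i), this means $\alpha_{\sigma(s)}$ is an extension of $\beta_t$ as a function, and dually $\alpha_{\sigma(s)}^{-1}$ extends $\beta_t^{-1}$ on $A_t$. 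Since ring isomorphisms carry identities to identities, $\alpha_{\sigma(s)}(1_{t^{-1}})=\beta_t(1_{t^{-1}})=1_t$ and $\alpha_{\sigma(s)}^{-1}(1_t)=1_{t^{-1}}$. Then, using that $1_t$ is central and that $\alpha_{\sigma(s)}$ is multiplicative,
\begin{align*}
1_t\,\alpha_{\sigma(s)}(z)=\alpha_{\sigma(s)}(z)\,\alpha_{\sigma(s)}(1_{t^{-1}})=\alpha_{\sigma(s)}(z\,1_{t^{-1}})=\beta_t(z\,1_{t^{-1}}),
\end{align*}
the last equality holding because $z\,1_{t^{-1}}\in A_{t^{-1}}=\mathrm{dom}(\beta_t)$, where $\alpha_{\sigma(s)}$ restricts to $\beta_t$.

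For $A^\alpha\subseteq A^\beta$, I would take $a\in A^\alpha$ and fix $s\in S$. Multiplying the invariance relation $\alpha_{\sigma(s)}(a\,1_{\sigma(s^{-1})})=a\,1_{\sigma(s)}$ by $1_s$ and applying the projection formula with $t=s$ and $z=a\,1_{\sigma(s^{-1})}$ gives $\beta_s(a\,1_{s^{-1}})=1_s\,a\,1_{\sigma(s)}=a\,1_s$, where I use $1_{\sigma(s^{-1})}1_{s^{-1}}=1_{s^{-1}}$ and $1_s1_{\sigma(s)}=1_s$ (because $A_{s}\subseteq A_{\sigma(s)}$ and $A_{s^{-1}}\subseteq A_{\sigma(s^{-1})}$). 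As $s$ is arbitrary, $a\in A^\beta$.

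For $A^\beta\subseteq A^\alpha$, I would take $a\in A^\beta$, fix $s\in S$, and compare the two elements $\alpha_{\sigma(s)}(a\,1_{\sigma(s^{-1})})$ and $a\,1_{\sigma(s)}$ of the unital ideal $A_{\sigma(s)}$. For each $t\in\sigma(s)$ the projection formula together with the hypothesis $\beta_t(a\,1_{t^{-1}})=a\,1_t$ yields $1_t\,\alpha_{\sigma(s)}(a\,1_{\sigma(s^{-1})})=a\,1_t=1_t\,(a\,1_{\sigma(s)})$. It then remains to prove a separation lemma: an element $w\in A_{\sigma(s)}$ with $1_t w=0$ for all $t\in\sigma(s)$ must vanish. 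This follows from the boolean-sum expression for $1_{\sigma(s)}$ recorded in Theorem \ref{teoconstracparcsigma}, since every monomial $1_{s_{i_1}}\cdots 1_{s_{i_k}}$ contains a factor $1_{s_{i_1}}$ and hence annihilates $w$, whence $w=1_{\sigma(s)}w=0$. Applying this to $w=\alpha_{\sigma(s)}(a\,1_{\sigma(s^{-1})})-a\,1_{\sigma(s)}$ gives the desired equality, so $a\in A^\alpha$.

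I expect the main obstacle to be the correct handling of the join map $\alpha_{\sigma(s)}$, which is a sum of compatible isomorphisms whose domains overlap on the intersections $A_{t^{-1}}\cap A_{u^{-1}}$; a naive ``apply $\beta_t$ summand by summand'' computation overcounts on these overlaps. The projection formula circumvents this by exploiting the order relation $\beta_t\preceq\alpha_{\sigma(s)}$ and the centrality of the idempotents $1_t$, while the separation lemma repackages the inclusion–exclusion bookkeeping already encoded in $1_{\sigma(s)}$.
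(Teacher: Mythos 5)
Your proof is correct, and it rests on the same two pillars as the paper's: the fact that $\beta_t \preceq \alpha_{\sigma(s)}$ for each $t \in \sigma(s)$ (so that $\alpha_{\sigma(s)}$ restricts to $\beta_t$ on $A_{t^{-1}}$, whence your projection formula $1_t\,\alpha_{\sigma(s)}(z) = \beta_t(z1_{t^{-1}})$), and the boolean-sum (inclusion--exclusion) expression for $1_{\sigma(s)}$. Your treatment of $A^\alpha \subseteq A^\beta$ is essentially the paper's computation verbatim: the paper writes $\beta_s(a1_{s^{-1}}) = \alpha_{\sigma(s)}(a1_{\sigma(s)^{-1}}1_{s^{-1}}) = \alpha_{\sigma(s)}(a1_{\sigma(s)^{-1}})\alpha_{\sigma(s)}(1_{s^{-1}}) = a1_{\sigma(s)}1_s = a1_s$, which is exactly your projection formula applied to the invariance relation. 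Where you genuinely diverge is the direction $A^\beta \subseteq A^\alpha$: the paper expands $1_{\sigma(s)^{-1}}$ by inclusion--exclusion \emph{inside} the argument of $\alpha_{\sigma(s)}$, distributes multiplicatively over each monomial $1_{s_{i_1}^{-1}}\cdots 1_{s_{i_k}^{-1}}$, replaces each factor by the corresponding $\beta_{s_{i_j}}$-image, and reassembles the alternating sum into $a1_{\sigma(s)}$; you instead verify the equality \emph{locally}, showing $1_t\,\alpha_{\sigma(s)}(a1_{\sigma(s)^{-1}}) = a1_t = 1_t\,(a1_{\sigma(s)})$ for every $t \in \sigma(s)$, and then conclude by your separation lemma, which is where the inclusion--exclusion identity is used, exactly once and in a transparent way (every monomial in $1_{\sigma(s)}$ contains some $1_{s_{i_1}}$, so it annihilates the difference). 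Your local-to-global organization buys cleaner bookkeeping -- the combinatorics is quarantined in one reusable lemma rather than threaded through the main computation -- while the paper's direct expansion has the advantage of exhibiting $\alpha_{\sigma(s)}(a1_{\sigma(s)^{-1}}) = a1_{\sigma(s)}$ by an explicit closed-form calculation. Both arguments are sound.
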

\begin{proof}
    Let $a \in A^\beta$, that is, $a \in A$ is such that $\beta_s(a1_{s^{-1}}) = a1_s$, for all $s \in S$. Thus:
    
    \begin{align*}
        \alpha_{\sigma(s)}(a1_{\sigma(s)^{-1}}) & = \alpha_{\sigma(s)}\left ( a \sum_{k=1}^n \sum_{i_1 \leq \cdots \leq i_k} (-1)^{k+1} 1_{s_{i_1}^{-1}} \cdots 1_{s_{i_k}^{-1}} \right ) \\
        & = \sum_{k=1}^n \sum_{i_1 \leq \cdots \leq i_k} (-1)^{k+1} \alpha_{\sigma(s)}(a1_{s_{i_1}^{-1}}) \cdots \alpha_{\sigma(s)}(1_{s_{i_k}^{-1}}) \\
        & = \sum_{k=1}^n \sum_{i_1 \leq \cdots \leq i_k} (-1)^{k+1} \beta_{s_{i_1}}(a1_{s_{i_1}^{-1}}) \cdots \beta_{s_{i_k}}(1_{s_{i_k}^{-1}})
        \end{align*}

        \begin{align*}
        & = \sum_{k=1}^n \sum_{i_1 \leq \cdots \leq i_k} (-1)^{k+1} a1_{s_{i_1}} \cdots 1_{s_{i_k}} \\
        & = a\sum_{k=1}^n \sum_{i_1 \leq \cdots \leq i_k} (-1)^{k+1} 1_{s_{i_1}} \cdots 1_{s_{i_k}} = a1_{\sigma(s)},
    \end{align*}
    where $\sigma(s) = \{s_1, \ldots, s_n\}$. Since $\sigma(s) \in S / \sigma$ is arbitrary, it follows that $a \in A^\alpha$.

    Reciprocally, suppose that $a \in A^\alpha$, that is, $\alpha_{\sigma(s)}(a1_{\sigma(s)^{-1}}) = a1_{\sigma(s)}$, for all $\sigma(s) \in S / \sigma$. Then: 
    \begin{align*}
        \beta_s(a1_{s^{-1}}) & = \alpha_{\sigma(s)}(a1_{s^{-1}}) = \alpha_{\sigma(s)}(a1_{\sigma(s)^{-1}}1_{s^{-1}}) = \alpha_{\sigma(s)}(a1_{\sigma(s)^{-1}})\alpha_{\sigma(s)}(1_{s^{-1}}) \\
        & = a1_{\sigma(s)}\beta_s(1_{s^{-1}}) = a1_{\sigma(s)}1_s = a1_s,
    \end{align*}
    for all $s \in S$. Therefore $a \in A^\beta$.
\end{proof}

Now, we define the $\sigma$-trace map $\tr^{\sigma}_\beta := \tr_\alpha: A \rightarrow A$ given by $\tr^{\sigma}_\beta(a) = \tr_\alpha(a) = \sum_{g \in S/\sigma} \alpha_g(a1_{g^{-1}})$. 

\begin{corollary}
Under the assumptions of Theorem \ref{teoconstracparcsigma}, the map $\tr^{\sigma}_\beta$ is a homomorphism of $A^\beta$-bimodules. Furthermore, $\tr^{\sigma}_\beta(A) \subseteq A^\beta$ and $\tr^{\sigma}_\beta(\beta_s(a)) = \tr^{\sigma}_\beta(a)$, for all $s \in S$ and $a \in A_{s^{-1}}$.
\end{corollary}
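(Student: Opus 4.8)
The plan is to reduce everything about $\tr^{\sigma}_\beta$ to known facts about the partial group action $\alpha = (A_g,\alpha_g)_{g \in G}$ constructed in Theorem \ref{teoconstracparcsigma}, since by definition $\tr^{\sigma}_\beta = \tr_\alpha$. This is precisely the situation where the classical theory works: the trace of a finite partial group action is invariant, because the group structure lets us reindex the defining sum. So the strategy is to verify the three assertions directly as statements about $\tr_\alpha$.

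First I would establish the bimodule property. Each $\alpha_g : A_{g^{-1}} \to A_g$ is a ring isomorphism and is $A^\alpha$-linear in the appropriate sense: for $c \in A^\alpha = A^\beta$ (equality by the preceding proposition) one has $\alpha_g(c1_{g^{-1}}) = c1_g$, so $\alpha_g((ca)1_{g^{-1}}) = \alpha_g(c1_{g^{-1}})\alpha_g(a1_{g^{-1}}) = c\,\alpha_g(a1_{g^{-1}})$ using that $1_{g^{-1}}$ is idempotent and central and that $c1_{g^{-1}}\cdot a1_{g^{-1}} = ca1_{g^{-1}}$. Summing over $g \in G$ gives $\tr_\alpha(ca) = c\,\tr_\alpha(a)$, and the right-module identity is symmetric since $A$ is commutative. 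Additivity of $\tr_\alpha$ is immediate from additivity of each $\alpha_g$ on its domain.

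The central claim is invariance, $\tr^{\sigma}_\beta(A) \subseteq A^\beta = A^\alpha$. Here I would show that for fixed $h \in G$ and any $a \in A$, $\alpha_h\big((\tr_\alpha(a))1_{h^{-1}}\big) = (\tr_\alpha(a))1_h$. The standard computation: $\alpha_h\big(1_{h^{-1}}\sum_{g}\alpha_g(a1_{g^{-1}})\big) = \sum_g \alpha_h\big(\alpha_g(a1_{g^{-1}})1_{h^{-1}}\big)$, and then I apply (P2) and (P3) of the partial action (available from Theorem \ref{teoconstracparcsigma}) to rewrite $\alpha_h\circ\alpha_g$ as $\alpha_{hg}$ on the relevant unital ideal. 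The key point is that as $g$ ranges over $G$, so does $hg$; after carefully tracking the idempotents $1_{h^{-1}}, 1_{g^{-1}}, 1_{(hg)^{-1}}$ one reindexes the sum by $g' = hg$ to recover $1_h\sum_{g'}\alpha_{g'}(a1_{g'^{-1}}) = (\tr_\alpha(a))1_h$. This reindexing, which is exactly the argument that works for global group actions, is available here precisely because $G = S/\sigma$ is a genuine group even though $S$ was only an inverse semigroup; this is what the whole $\sigma$-construction was engineered to buy us.

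Finally, for $\tr^{\sigma}_\beta(\beta_s(a)) = \tr^{\sigma}_\beta(a)$ with $a \in A_{s^{-1}}$, I would note that $\beta_s$ is one of the summands $\beta_t$ making up $\alpha_{\sigma(s)}$, and more to the point that applying $\beta_s$ moves $a$ within the $\sigma$-class $\sigma(s) = \sigma(s)$; the cleanest route is to observe $\tr_\alpha(\alpha_{\sigma(s)}(a)) = \tr_\alpha(a)$ for the group element $\sigma(s)$ by the same reindexing $g' = g\,\sigma(s)^{-1}$, and then relate $\tr_\alpha(\beta_s(a))$ to $\tr_\alpha(\alpha_{\sigma(s)}(a1_{s^{-1}}))$ using that $\beta_s = \alpha_{\sigma(s)}|_{A_{s^{-1}}}$ and that for $a \in A_{s^{-1}}$ one has $\beta_s(a) = \alpha_{\sigma(s)}(a1_{s^{-1}})$. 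The main obstacle throughout is bookkeeping with the idempotents: one must consistently use that each $1_g$ is a central idempotent, that $\alpha_g$ sends $1_{g^{-1}}$ to $1_g$, and that intersections of unital ideals correspond to products of their idempotents, so that every restriction hidden inside (P2)/(P3) is accounted for when the sums are reindexed. None of the individual steps is deep, but the invariance computation must be carried out so that the domains match at each application of $\alpha_h \circ \alpha_g = \alpha_{hg}$.
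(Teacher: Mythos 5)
Your proposal is correct and takes essentially the same route as the paper: both identify $\tr^{\sigma}_\beta$ with $\tr_\alpha$ for the partial group action $\alpha$ of $G = S/\sigma$ from Theorem \ref{teoconstracparcsigma}, and both prove the invariance statements by reindexing the defining sum over the group $G$ (with the same idempotent bookkeeping, e.g.\ $a1_{\sigma(s)^{-1}} = a$ for $a \in A_{s^{-1}}$ and $\beta_s = \alpha_{\sigma(s)}|_{A_{s^{-1}}}$). The only difference is that the paper outsources the bimodule property and $\tr_\alpha(A) \subseteq A^\alpha$ to \cite[Lemma 2.1(i)]{dokuchaev2007partial}, whereas you carry out those standard computations directly.
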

\begin{proof}
    The map $\tr^{\sigma}_\beta$ is a homomorphism of $A^\beta$-bimodules and $\tr^{\sigma}_\beta(A) \subseteq A^\beta$ from \cite[Lemma 2.1(i)]{dokuchaev2007partial}. For the last claim, let $s \in S$ and $a \in A_{s^{-1}}$. Then:
    \begin{align*}
        \tr^{\sigma}_\beta(\beta_s(a)) & = \tr_\alpha(\beta_s(a))  = \sum_{g \in S / \sigma} \alpha_g(\beta_s(a)1_{g^{-1}}) = \sum_{g \in S / \sigma} \alpha_g(\alpha_{\sigma(s)}(a)1_{g^{-1}}) \\
        & = \sum_{g \in S / \sigma} \alpha_{g\sigma(s)}(a1_{(g\sigma(s))^{-1}})1_g  = \sum_{h \in S / \sigma} \alpha_{h}(a1_{h^{-1}})1_{h\sigma(s)^{-1}} \\
        & = \sum_{h \in S / \sigma} \alpha_{h}(a1_{h^{-1}})\alpha_h(1_{h^{-1}}1_{\sigma(s)^{-1}})  = \sum_{h \in S / \sigma} \alpha_{h}(a1_{h^{-1}}1_{\sigma(s)^{-1}}) \\
        & = \sum_{h \in S / \sigma} \alpha_{h}(a1_{h^{-1}}) = \tr_\alpha(a) = \tr^{\sigma}_\beta(a).
    \end{align*}
\end{proof}

Assume the notations and assumptions of Theorem \ref{teoconstracparcsigma}. The ring  $A$ is a $(A^\beta,A *_\alpha G)$-bimodule (respectively $(A *_\alpha G,A^\beta)$-bimodule) with actions of $A^\beta$ on the left and on the right given by multiplication and action of $A *_{\alpha} G$ on the right given by:
\begin{align*}
    a \cdot b_g\delta_g = \alpha_{g^{-1}}(ab_g),
\end{align*}
and on the left by:
\begin{align*}
    b_g\delta_g \cdot a = b_g\alpha_g(a1_{g^{-1}}),
\end{align*}
for all $g \in G$, $b_g \in A_g$, $a \in A$ and linearly extended.

The map
\begin{align*}
    T : A \times A & \to A^\beta \\
        (a,b) & \mapsto \tr^{\sigma}_\beta(ab)
\end{align*}
is $A *_\alpha G$-balanced and the map
\begin{align*}
    T' : A \times A & \to A *_\alpha G \\
    (a,b) & \mapsto \sum_{g \in G} a\alpha_g(b1_{g^{-1}})\delta_g
\end{align*}
is $A^{\beta}$-balanced.

Therefore, we can define the maps
\begin{align*}
    \tau : A \otimes_{A *_\alpha G} A & \to A^\beta \\
        a \otimes b & \mapsto \tr^{\sigma}_\beta(ab)
\end{align*}
and
\begin{align*}
    \tau' : A \otimes_{A^\beta} A & \to A *_\alpha G \\
        a \otimes b & \mapsto \sum_{g \in G} a\alpha_g(b1_{g^{-1}})\delta_g.
\end{align*}

\begin{prop} 
The sextuple $(A *_\alpha G, A^\beta, A, A, \tau, \tau')$ is a Morita context. The map $\tau$ is surjective if and only if there exists $a \in A$ such that $\tr^{\sigma}_\beta(a) = 1_A$.
\end{prop}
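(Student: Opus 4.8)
The plan is to establish that the sextuple $(A *_\alpha G, A^\beta, A, A, \tau, \tau')$ is a Morita context, which amounts to verifying two associativity/compatibility conditions on the bilinear maps $\tau$ and $\tau'$, and then to characterize surjectivity of $\tau$. Since $A^\beta = A^\alpha$ by the preceding proposition, and $\tr^\sigma_\beta = \tr_\alpha$ by definition, the key observation is that the entire setup is \emph{identical} to the classical Morita context attached to a partial group action $\alpha$ of $G = S/\sigma$ on $A$. This is precisely the content of the theory developed by Dokuchaev, Ferrero, and Paques in \cite{dokuchaev2007partial}, so I would not re-derive the Morita axioms from scratch but rather transport their result through the identifications just made.

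Concretely, I would first recall that a Morita context here requires the two bimodule-homomorphism properties: that $\tau$ is an $(A^\beta, A^\beta)$-bimodule map and $\tau'$ an $(A*_\alpha G, A*_\alpha G)$-bimodule map (both already encoded in the balancing conditions asserted just before the statement), together with the two \emph{associativity laws} linking them, namely $\tau(a \otimes b)\cdot c = a \cdot \tau'(b \otimes c)$ and $\tau'(a \otimes b)\cdot c = a \cdot \tau(b \otimes c)$ for all $a,b,c \in A$, where the dots denote the respective bimodule actions defined above. Unwinding the left-hand side of the first law gives $\tr_\alpha(ab)c$, while the right-hand side is $a \cdot \sum_{g} \alpha_g(bc1_{g^{-1}})\delta_g = \sum_g \alpha_{g^{-1}}\!\left(a\alpha_g(bc1_{g^{-1}})\right)$; a short computation using (P3) and the fact that $\alpha_{g^{-1}}\alpha_g$ is the identity on the appropriate ideal reduces both sides to $\sum_g \alpha_{g^{-1}}(a)\,bc\,1_{g^{-1}}$-type expressions that coincide. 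The second associativity law is checked symmetrically. All of this is exactly \cite[Theorem 3.2]{dokuchaev2007partial} applied to the partial action $\alpha$ furnished by Theorem \ref{teoconstracparcsigma}, so the first sentence of the statement follows by citation once the identifications are in place.

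For the surjectivity claim, I would argue as follows. Since $A^\beta$ is a ring with identity $1_A$ and $\tau$ is an $A^\beta$-bimodule map into $A^\beta$, its image is an ideal of $A^\beta$ containing $1_A$ precisely when it is all of $A^\beta$. If $\tau$ is surjective, then $1_A$ lies in the image, so there exist $a_j, b_j \in A$ with $\sum_j \tr_\alpha(a_j b_j) = 1_A$; setting $a = \sum_j a_j b_j \in A$ yields $\tr^\sigma_\beta(a) = 1_A$. Conversely, if some $a \in A$ satisfies $\tr^\sigma_\beta(a) = 1_A$, then for any $x \in A^\beta$ we have $\tau(x a \otimes 1_A) = \tr_\alpha(xa)$; using that $\tr_\alpha$ is an $A^\beta$-bimodule homomorphism (the preceding corollary) and that $x \in A^\beta = A^\alpha$, this equals $x\,\tr_\alpha(a) = x\,1_A = x$, exhibiting $x$ in the image of $\tau$ and proving surjectivity.

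The main obstacle I anticipate is purely bookkeeping rather than conceptual: carefully tracking the unital idempotents $1_{g^{-1}}$ through the associativity computations so that the domains match and each application of (P3) is legitimate, since $\alpha_g$ is only defined on $A_{g^{-1}}$. The one genuinely delicate point is ensuring that the bimodule actions of $A *_\alpha G$ are well defined and associative on $A$ — but this is subsumed in the reference \cite{dokuchaev2007partial}, and the translation $\tr^\sigma_\beta = \tr_\alpha$, $A^\beta = A^\alpha$ is exactly what licenses the appeal to that reference. Thus the bulk of the work is invoking the partial-group Morita machinery correctly; the novelty lies entirely in Theorem \ref{teoconstracparcsigma}, which replaces the non-invariant inverse-semigroup trace with the invariant $\alpha$-trace.
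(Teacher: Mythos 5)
Your proposal is correct and follows essentially the same route as the paper: the paper's entire proof is to apply the known Morita context result for partial actions (it cites \cite[Proposition 4.4]{bagio2012partial}) to the partial action $\alpha$ of $G = S/\sigma$ furnished by Theorem \ref{teoconstracparcsigma}, exactly the transport-by-identification ($A^\beta = A^\alpha$, $\tr^\sigma_\beta = \tr_\alpha$) that you describe, with your citation of \cite{dokuchaev2007partial} being an equally valid source for the same fact. Your additional sketches of the associativity laws and the surjectivity argument are correct but are details the paper leaves entirely to the reference.
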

\begin{proof}
It follows by \cite[Proposition 4.4]{bagio2012partial} applied on the partial action $\alpha$ of $G$ on $A$.
\end{proof}

\section{Galois extensions}

For what follows in this section, assume that $S$ is a $E$-unitary inverse semigroup acting injectively on $A$ via a unital action $\beta = (A_s,\beta_s)_{s \in S}$, and $\alpha = (A_{g},\alpha_g)_{g \in G}$ is the partial action of the group $G = S / \sigma$ on $A$ as defined in Theorem \ref{teoconstracparcsigma}.

\begin{prop} \label{propgaloissse}
   $A$ is a $\beta$-Galois extension of $A^\beta$ if and only if $A$ is a  $\alpha$-Galois extension of $A^\alpha$.
\end{prop}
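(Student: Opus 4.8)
The plan is to prove the equivalence by directly relating the two notions of Galois coordinate system via the defining identity
\[
\sum_{i=1}^n x_i\beta_s(y_i1_{s^{-1}}) = \sum_{e \in E(S)} 1_e\delta_{e,s}
\qquad\text{versus}\qquad
\sum_{i=1}^n x_i\alpha_{g}(y_i1_{g^{-1}}) = \delta_{1_G,g}.
\]
The key observation, which I would establish first, is that $\alpha_{\sigma(s)}$ restricted to $A_s$ agrees with $\beta_s$, since $\alpha_{\sigma(s)} = \sum_{t \in \sigma(s)} \beta_t$ is the join of the $\beta_t$ over $t \in \sigma(s)$, and $\beta_s$ is the summand indexed by $s$ itself; in particular $\alpha_{\sigma(s)}(a 1_{s^{-1}}) = \beta_s(a 1_{s^{-1}})$ for all $a \in A$. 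This compatibility is what allows a Galois coordinate system for one action to be converted into one for the other.

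For the forward direction, suppose $\{x_i,y_i\}_{i=1}^n$ is a $\beta$-Galois coordinate system. Fixing $g = \sigma(s) \in G$, I would compute $\sum_i x_i \alpha_g(y_i 1_{g^{-1}})$ by expanding $\alpha_g = \alpha_{\sigma(s)} = \sum_{t \in \sigma(s)} \beta_t$ and $1_{g^{-1}} = 1_{\sigma(s)^{-1}}$ as the boolean sum of the $1_{t^{-1}}$, so that the whole expression breaks up into a combination of the $\beta$-sums $\sum_i x_i \beta_t(y_i 1_{t^{-1}})$ over $t \in \sigma(s)$. By the $\beta$-Galois identity, each such inner sum equals $\sum_{e \in E(S)} 1_e \delta_{e,t}$, which is nonzero (and equal to $1_e$) precisely when $t$ is an idempotent. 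Now $t \in \sigma(s)$ is idempotent for some such $t$ exactly when $\sigma(s) = E(S)$, i.e. when $g = 1_G$ (using Theorem \ref{teosigmaigsim}(iii)); in that case the surviving terms assemble, via the boolean-sum inclusion–exclusion, to $1_{\sigma(e)} = 1_A$, and otherwise the total is $0$. This yields exactly the $\alpha$-Galois identity $\sum_i x_i \alpha_g(y_i 1_{g^{-1}}) = \delta_{1_G,g}$.

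For the converse I would run essentially the same bookkeeping in reverse: given an $\alpha$-Galois coordinate system, I must recover the finer $\beta$-identity indexed by individual $s \in S$. Here I would use that for fixed $s$, multiplying the relation $\sum_i x_i \alpha_{\sigma(s)}(y_i 1_{\sigma(s)^{-1}}) = \delta_{1_G,\sigma(s)}$ by $1_s$ and exploiting $\alpha_{\sigma(s)}(a 1_{s^{-1}})1_s = \beta_s(a 1_{s^{-1}})$ singles out the $\beta_s$-term; the right-hand side becomes $1_s$ when $s \in E(S)$ and $0$ otherwise, matching $\sum_{e}1_e\delta_{e,s}$. The main obstacle, and the step demanding the most care, is the combinatorial handling of the boolean sum $1_{\sigma(s)}$: one must verify that the inclusion–exclusion over the idempotents $1_{t^{-1}}$ interacts correctly with the sum $\sum_{t \in \sigma(s)}\beta_t$ so that overlapping domains are neither double-counted nor lost. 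I expect the cleanest route is to avoid manipulating the boolean sum head-on and instead argue termwise, localizing at each idempotent $1_s$ and using the compatibility $\alpha_{\sigma(s)}|_{A_s} = \beta_s$ together with Theorem \ref{teosigmaigsim}(iii) to identify when $\sigma(s)$ meets $E(S)$.
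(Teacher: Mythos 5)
Your proposal is correct and takes essentially the same route as the paper's proof: the forward direction expands $1_{g^{-1}}$ as the boolean (inclusion--exclusion) sum and uses the restriction property $\alpha_{\sigma(t)}(a1_{t^{-1}}) = \beta_t(a1_{t^{-1}})$ to reduce each term to the $\beta$-Galois identity (killed unless $g = 1_G$ by $E$-unitarity), while the converse multiplies the $\alpha$-Galois identity by $1_s$ to single out $\beta_s$ and uses $\sigma(s) = 1_G \iff s \in E(S)$, exactly as in the paper. The only nitpick is notational: $\alpha_{\sigma(s)}$ agrees with $\beta_s$ on the domain $A_{s^{-1}}$, not on $A_s$, as your own displayed formula correctly reflects.
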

\begin{proof}
    Suppose that $A$ is a $\beta$-Galois extension of $A^\beta = A^\alpha$. So there exist $\{ x_i, y_i\}_{i=1}^n$ in $A$ such that
    \begin{align*}
        \sum_{i=1}^n x_i\beta_s(y_i1_{s^{-1}}) = \sum_{e \in E(S)} 1_e\delta_{e,s}.
    \end{align*}

    Given an element $g \in G$,
    \begin{align*}
        \sum_{i=1}^n x_i\alpha_g(y_i1_{g^{-1}}) & = \sum_{i=1}^n x_i\alpha_g\left (y_i \sum_{k = 1}^m\sum_{i_1 \leq \cdots \leq i_k} (-1)^{k+1} 1_{s_{i_1}^{-1}} \cdots 1_{s_{i_k}^{-1}} \right ) \\
        & = \sum_{i=1}^n \sum_{k = 1}^m (-1)^{k+1} \sum_{i_1 \leq \cdots \leq i_k}  x_i\alpha_g(y_i 1_{s_{i_1}^{-1}} \cdots 1_{s_{i_k}^{-1}}) \\
        & = \sum_{k = 1}^m (-1)^{k+1} \sum_{i_1 \leq \cdots \leq i_k}  \left ( \sum_{i=1}^n x_i\alpha_g(y_i 1_{s_{i_1}^{-1}}) \right )\alpha_g(1_{s_{i_2}}^{-1}) \cdots \alpha_g(1_{s_{i_k}^{-1}}))
        \end{align*}

        \begin{align*}
        & = \sum_{k = 1}^m (-1)^{k+1} \sum_{i_1 \leq \cdots \leq i_k}  \left ( \sum_{i=1}^n x_i\beta_{s_{i_1}}(y_i 1_{s_{i_1}^{-1}}) \right )\beta_{s_{i_2}}(1_{s_{i_2}}^{-1}) \cdots \beta_{s_{i_k}}(1_{s_{i_k}^{-1}})) \\
        & = \sum_{k = 1}^m (-1)^{k+1} \sum_{i_1 \leq \cdots \leq i_k}  \left ( \sum_{e \in E(S)} 1_e \delta_{e,s_{i_1}} \right )1_{s_{i_2}}\cdots 1_{s_{i_k}},
    \end{align*}
    where $g = \{ s_1, \ldots, s_m\}$. If $g \neq 1_{G}$, then $s_i \notin E(S)$, for all $1 \leq i \leq m$. Hence $\sum_{i=1}^n x_i\alpha_g(y_i1_{g^{-1}})$ $= 0.$

    On the other hand, if $g = 1_{G}$, then $s_i \in E(S)$, for all $1 \leq i \leq m$. Thus:
    \begin{align*}
        \sum_{i=1}^n x_i\alpha_g(y_i1_{g^{-1}}) & = \sum_{k = 1}^m (-1)^{k+1} \sum_{i_1 \leq \cdots \leq i_k}  \left ( \sum_{e \in E(S)} 1_e \delta_{e,s_{i_1}} \right )1_{s_{i_2}}\cdots 1_{s_{i_k}} \\
        & = \sum_{k = 1}^m (-1)^{k+1} \sum_{i_1 \leq \cdots \leq i_k}  1_{s_{i_1}}1_{s_{i_2}}\cdots 1_{s_{i_k}} \\
        & = \sum_{k = 1}^m \sum_{i_1 \leq \cdots \leq i_k} (-1)^{k+1} 1_{s_{i_1}}1_{s_{i_2}}\cdots 1_{s_{i_k}} \\
        & = 1_A = 1_{g}.
    \end{align*}

    Therefore, $\sum_{i=1}^n x_i\alpha_g(y_i1_{g^{-1}}) = 1_A\delta_{1_{G},g},$ for all $g \in G$, that is, $A$ is a $\alpha$-Galois extension of $A^\alpha$.

    Reciprocally, suppose that $A$ is a $\alpha$-Galois extension of $A^\alpha$ and consider a Galois coordinate system $\{ x_i, y_i\}_{i=1}^n$ of $A$ on $A^\alpha$. Hence, for all $g \in G$, $\sum_{i=1}^n x_i\alpha_g(y_i1_{g^{-1}}) = \delta_{1_{G},g}.$

    Let $s \in S$. Then
    \begin{align*}
        \sum_{i=1}^n x_i\beta_s(y_i1_{s^{-1}}) & = \sum_{i=1}^n x_i\alpha_{\sigma(s)}(y_i1_{s^{-1}}) = \sum_{i=1}^n x_i\alpha_{\sigma(s)}(y_i1_{\sigma(s)^{-1}}1_{s^{-1}}) \\
        & = \sum_{i=1}^n x_i\alpha_{\sigma(s)}(y_i1_{\sigma(s)^{-1}})\alpha_{\sigma(s)}(1_{s^{-1}})  = \sum_{i=1}^n x_i\alpha_{\sigma(s)}(y_i1_{\sigma(s)^{-1}})\beta_{s}(1_{s^{-1}}) \\
        & = \sum_{i=1}^n x_i\alpha_{\sigma(s)}(y_i1_{\sigma(s)^{-1}})1_{s}  = \left ( \sum_{i=1}^n x_i\alpha_{\sigma(s)}(y_i1_{\sigma(s)^{-1}}) \right )1_{s} \\
        & = 1_A \delta_{\sigma(s),1_{G}}1_s  = 1_s \delta_{\sigma(s),1_{G}}.
    \end{align*}

    If $s \in E(S)$, then $\sigma(s) = 1_{G}$, hence $\sum_{i=1}^n x_i\beta_s(y_i1_{s^{-1}}) = 1_s.$ On the other hand, if $s \notin E(S)$, then $\sum_{i=1}^n x_i\beta_s(y_i1_{s^{-1}}) = 0.$ Thus
    \begin{align*}
        \sum_{i=1}^n x_i\beta_s(y_i1_{s^{-1}}) & = \sum_{e \in E(S)} 1_e\delta_{e,s},
    \end{align*}
    that is, $A$ is $\beta$-Galois extension of $A^\beta$.
\end{proof}

Consider the map $j : A *_\alpha G \to End(A)_{A^\beta}$ given by $j\left (\sum_{g \in G} a_g \delta_g \right )(a) = \sum_{g \in G} a_g\alpha_g(a1_{g^{-1}}).$ It is  clearly a well-defined homomorphism of $A$-modules and of rings.

Let $M$ be a left $A *_\alpha G$-module. We set:
\begin{align*}
    M^S = \{m \in M : 1_s\delta_{\sigma(s)} \cdot m = 1_sm, \text{ for all } s \in S\},
\end{align*}
the $A^\beta$-submodule of invariants of $M$ by $S$. Note that $M$ is a left $A$-module via the embedding $a \mapsto a1_{A *_\alpha G}$ of $A$ in $A *_\alpha G$. We also set:
\begin{align*}
    M^{G} = \{m \in M : 1_g\delta_{g} \cdot m = 1_gm, \text{ for all } g \in G\}.
\end{align*}

\begin{lemma} \label{lemaigm}
    Following the notations above, $M^S = M^{G}$.
\end{lemma}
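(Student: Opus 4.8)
The plan is to prove the two set-theoretic inclusions $M^S \subseteq M^G$ and $M^G \subseteq M^S$ separately, exploiting the relationship between the idempotents $1_s$ of the inverse semigroup action and the identities $1_{\sigma(s)}$ of the induced partial group action, together with the structure of the crossed product $A *_\alpha G$. The central observation is that for a group element $g = \sigma(s) \in G = S/\sigma$, the associated ideal $A_g = A_{\sigma(s)}$ has identity $1_{\sigma(s)} = \sum_{t \in \sigma(s)} 1_t$ taken as a boolean sum, so that the defining conditions $1_s \delta_{\sigma(s)} \cdot m = 1_s m$ (ranging over $s \in S$) and $1_g \delta_g \cdot m = 1_g m$ (ranging over $g \in G$) are built from the same raw data, just repackaged.

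For the inclusion $M^G \subseteq M^S$, I would start with $m \in M^G$, so $1_g \delta_g \cdot m = 1_g m$ for all $g \in G$. Given an arbitrary $s \in S$, set $g = \sigma(s)$; since $1_s \preceq 1_{\sigma(s)} = 1_g$ in the sense that $1_s = 1_s 1_{\sigma(s)}$ (because $A_s \subseteq A_{\sigma(s)}$), I would multiply the identity $1_g \delta_g \cdot m = 1_g m$ on the left by the central idempotent $1_s$. Using that $1_s (1_g \delta_g) = 1_s \delta_{\sigma(s)}$ in $A *_\alpha G$ (here $1_s 1_g = 1_s$ since $1_s$ is subordinate to $1_g$), the left-hand side becomes $1_s \delta_{\sigma(s)} \cdot m$ and the right-hand side becomes $1_s 1_g m = 1_s m$. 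This yields exactly the condition defining $M^S$, so $m \in M^S$.

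For the reverse inclusion $M^S \subseteq M^G$, I would take $m \in M^S$ and fix $g \in G$, writing $g = \sigma(s) = \{s_1, \dots, s_k\}$ as a finite $\sigma$-class. The hypothesis gives $1_{s_i} \delta_g \cdot m = 1_{s_i} m$ for each $i$ (noting $\sigma(s_i) = g$ for all $i$). The goal is to recombine these into $1_g \delta_g \cdot m = 1_g m$, using that $1_g = 1_{\sigma(s)}$ is the boolean sum of the $1_{s_i}$. The natural route is inclusion–exclusion: express $1_g$ via the explicit boolean-sum formula from Theorem \ref{teoconstracparcsigma}, namely $1_{\sigma(s)} = \sum_{j} \sum_{i_1 \leq \cdots \leq i_j} (-1)^{j+1} 1_{s_{i_1}} \cdots 1_{s_{i_j}}$, and check that each product $1_{s_{i_1}} \cdots 1_{s_{i_j}}$ (a central idempotent subordinate to some $1_{s_{i_1}}$) also satisfies the defining condition of $M^S$. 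Concretely, since each such product $e$ satisfies $e \leq 1_{s_{i_1}}$, multiplying the relation $1_{s_{i_1}} \delta_g \cdot m = 1_{s_{i_1}} m$ by $e$ gives $e \delta_g \cdot m = e m$; summing with the alternating signs then reconstructs $1_g \delta_g \cdot m = 1_g m$ by linearity of the action.

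The main obstacle I expect is bookkeeping the interaction between the idempotents and the crossed-product multiplication: one must verify carefully that $1_s \delta_{\sigma(s)}$ and $e \, \delta_g$ behave correctly as elements of $A *_\alpha G$ (in particular that $1_s = 1_s 1_{\sigma(s)}$ lies in $A_g$ so the symbol $1_s \delta_g$ is legitimate), and that multiplying an element of $A$ into $b_g \delta_g$ on the left respects the module action rule $b_g \delta_g \cdot a = b_g \alpha_g(a 1_{g^{-1}})$. The delicate point in the forward inclusion is ensuring the boolean-sum reassembly is compatible with the action, which reduces to the linearity of $M$ as an $A *_\alpha G$-module and the fact that each sub-product idempotent inherits the invariance condition; once this is granted, both inclusions follow by direct manipulation.
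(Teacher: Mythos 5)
Your proposal is correct and follows essentially the same route as the paper's proof: the inclusion $M^G \subseteq M^S$ by multiplying $1_g\delta_g \cdot m = 1_g m$ by the subordinate idempotent $1_s$ (using $1_s 1_{\sigma(s)} = 1_s$), and the inclusion $M^S \subseteq M^G$ by expanding $1_g$ as the inclusion--exclusion boolean sum of the $1_{s_i}$ and applying the invariance condition termwise before reassembling by linearity. The only cosmetic difference is that the paper factors each term as $(1_{s_{i_1}}\cdots 1_{s_{i_{k-1}}})\cdot(1_{s_{i_k}}\delta_g \cdot m)$ rather than multiplying the single relation for $1_{s_{i_1}}$ by the product idempotent, which is the same manipulation.
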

\begin{proof}
    If $m \in M^{G}$, then
    \begin{align*}
        1_s\delta_{\sigma(s)} \cdot m = 1_s \cdot (1_{\sigma(s)}\delta_{\sigma(s)} \cdot m) = 1_s \cdot (1_{\sigma(s)} \cdot m) = (1_s \cdot 1_{\sigma(s)}) \cdot m = 1_s \cdot m,
    \end{align*}
    for all $s \in S$. Thus $m \in M^S$.

    Reciprocally, if $m \in M^S$, then, taking $\{ s_1, \ldots s_n \} = g \in G$,
    \begin{align*}
        1_g \delta_{g} \cdot m & = \left ( \sum_{k=1}^n\sum_{i_1 \leq \cdots \leq i_k} (-1)^{k+1} 1_{s_{i_1}} \cdots 1_{s_{i_k}} \delta_{\sigma(s_{i_k})} \right ) \cdot m \\
        & = \sum_{k=1}^n\sum_{i_1 \leq \cdots \leq i_k}  \left ( (-1)^{k+1} 1_{s_{i_1}} \cdots 1_{s_{i_{k-1}}} \right ) \cdot  (1_{s_{i_k}} \delta_{\sigma(s_{i_k})} \cdot m) \\
        & = \sum_{k=1}^n\sum_{i_1 \leq \cdots \leq i_k}  \left ( (-1)^{k+1} 1_{s_{i_1}} \cdots 1_{s_{i_{k-1}}} \right ) \cdot  (1_{s_{i_k}} \cdot m) \\
        & = \sum_{k=1}^n\sum_{i_1 \leq \cdots \leq i_k}  \left ( (-1)^{k+1} 1_{s_{i_1}} \cdots  1_{s_{i_k}} \right ) \cdot m
     = 1_g \cdot m.
    \end{align*} 
\end{proof}

\begin{defi}
    Consider the ring $A_\beta(S) = \prod_{s \in S} A_s$. We define the subset $PA_\beta(S)$ of $A_\beta(S)$ as
    \begin{align*}
        PA_\beta(S) = \{ (a_s)_{s \in S} : \text{ if } s \preceq t, \text{ then } a_s = a_t1_s \}.
    \end{align*}

    Notice that $PA_\beta(S)$ is a subring of $A_\beta(S)$ and it is also a left $A$-submodule of $A_\beta(S)$.
\end{defi}

We shall present a Galois correspondence for an inverse semigroup acting on a commutative ring. So, for the remainder of this  section, assume that the ring $A$ is commutative. Most of the following characterization for Galois extensions is a consequence from Theorem \ref{teoconstracparcsigma}, Proposition \ref{propgaloissse}, Lemma \ref{lemaigm} and the references \cite{bagio2012partial} and \cite{dokuchaev2007partial}.

\begin{theorem} \label{teoequivseminvgal}
    The following statements are equivalent:
    \begin{enumerate}
        \item[(i)] $A$ is a $\beta$-Galois extension of $A^\beta$;

        \item[(ii)] $A$ is a finitely generated projective $A^\beta$-module and $j$ is an isomorphism of left $A$-modules and of rings.

        \item[(iii)] For every left $A *_\alpha G$-module $M$, the map $\mu : A \otimes M^S \to M$ given by $\mu(a \otimes m) = am$ is an isomorphism of left $A$-modules.

        \item[(iv)] The map $\psi : A \otimes_{A^\beta} A \to PA_\beta(S)$ defined by $\psi(a \otimes b) = (a\beta_s(b1_{s^{-1}}))_{s \in S}$ is an isomorphism of left $A$-modules.

        \item[(v)] $A$ is $A^\beta$-separable and $\beta$-strong.

        \item[(vi)] $AtA = A *_{\alpha} G$, where $t = \sum_{g \in G} 1_g\delta_g$.
    
        \item[(vii)] The map $\tau'$ is surjective.
    
        \item[(viii)] $A$ is a generator for the category of left $A *_\alpha G$-modules.
    
        \item[(ix)] $\tr^\sigma_{\beta}(A) = A^\beta$.
    
        \item[(x)] $A$ is a generator for the category of right $A^\beta$-modules.
    
        \item[(xi)] The Morita context $(A *_\alpha G, A^\beta, A, A, \tau, \tau')$ is strict.
    \end{enumerate}
\end{theorem}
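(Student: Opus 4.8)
The plan is to establish the eleven-fold equivalence by leveraging the partial group action $\alpha$ constructed in Theorem \ref{teoconstracparcsigma}, together with Proposition \ref{propgaloissse}, which already identifies $\beta$-Galois extensions of $A^\beta$ with $\alpha$-Galois extensions of $A^\alpha$. The strategy is to transport the classical equivalence theorem for partial group actions (as developed in \cite{bagio2012partial} and \cite{dokuchaev2007partial}) across this identification, and then verify that each statement phrased in terms of $\beta$ and $S$ matches the corresponding statement for $\alpha$ and $G$. Since $A^\beta = A^\alpha$ (by the Proposition following Theorem \ref{teoconstracparcsigma}) and $\tr^\sigma_\beta = \tr_\alpha$ by definition, several of the items translate verbatim.

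First I would organize the proof around the known partial-action equivalence theorem. For the partial action $\alpha = (A_g,\alpha_g)_{g\in G}$ of the finite group $G = S/\sigma$, the cited references give the equivalence of: $\alpha$-Galois; $A$ finitely generated projective over $A^\alpha$ with $j$ an isomorphism; the module-theoretic statement via $M^G$; separability together with the appropriate strongness condition; the ideal equality $AtA = A *_\alpha G$; surjectivity of $\tau'$; generator conditions for $A *_\alpha G$-modules and for $A^\beta$-modules; the trace surjectivity $\tr_\alpha(A) = A^\alpha$; and strictness of the Morita context. This immediately settles the mutual equivalence of (ii), (vi), (vii), (viii), (ix), (x), (xi), and connects them to (i) through Proposition \ref{propgaloissse}. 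The remaining work is to bridge the statements that are genuinely phrased in terms of $S$ rather than $G$, namely (iii), (iv), and (v).

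Next I would dispatch (iii) using Lemma \ref{lemaigm}, which gives $M^S = M^G$ for any left $A *_\alpha G$-module $M$; thus the map $\mu : A \otimes M^S \to M$ coincides with the analogous map $A \otimes M^G \to M$ from the partial-action theory, and (iii) is equivalent to its group-theoretic counterpart. For (iv), the plan is to show that the codomain $PA_\beta(S)$ is isomorphic as a left $A$-module to $\prod_{g\in G} A_g$, which is the natural target in the partial-action setting; the key observation is that an element $(a_s)_{s\in S}$ of $PA_\beta(S)$ satisfying $a_s = a_t 1_s$ whenever $s \preceq t$ is determined by its values on a transversal of the $\sigma$-classes, since within each class $\sigma(s)$ the compatibility forces the coordinates to be the restrictions of a single element $\alpha_{\sigma(s)}$-worth of data. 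Checking that $\psi(a\otimes b) = (a\beta_s(b1_{s^{-1}}))_s$ corresponds under this identification to the map $a\otimes b \mapsto (a\alpha_g(b1_{g^{-1}}))_g$ uses the relation $\beta_s = \alpha_{\sigma(s)}|_{A_{s^{-1}}}$.

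The main obstacle I anticipate is item (v), specifically matching the $\beta$-strong condition for the subalgebra $B = A^\beta$ against the strongness condition for the partial action $\alpha$ that appears in the partial-action Galois theorem. One must verify that $A^\beta$ being $\beta$-strong (the defining inequality $\beta_s(b1_{s^{-1}})e \neq \beta_t(b1_{t^{-1}})e$ for $s^{-1}t \notin S_{A^\beta}$) translates precisely into the corresponding separability-plus-strongness hypothesis for $\alpha$ over $A^\alpha = A^\beta$; the subtlety is that indices $s,t \in S$ collapse to cosets in $G = S/\sigma$, so one must check that $s^{-1}t \notin S_{A^\beta}$ is the correct condition and that the idempotents $1_s, 1_t$ assemble correctly into the unital idempotents $1_g$ of the partial action. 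Once this correspondence is verified, (v) joins the chain through the partial-action theorem, and since every listed statement has now been shown equivalent to the $\alpha$-Galois property, the proof concludes by invoking Proposition \ref{propgaloissse} to return to (i).
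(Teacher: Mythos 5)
Your overall strategy coincides with the paper's for most items: the paper also uses Proposition \ref{propgaloissse}, Lemma \ref{lemaigm}, and the partial-action theorems of \cite{dokuchaev2007partial} and \cite{bagio2012partial} to settle (i), (ii), (iii), (vi)--(xi), and its proof of (iii) $\Rightarrow$ (iv) is exactly the bridge you describe, namely an $A$-module isomorphism $\prod_{g \in G} A_g \simeq PA_\beta(S)$ intertwining $\rho$ and $\psi$. The decisive gap is in item (v). Statement (v) asserts that $A$ \emph{itself} is $\beta$-strong (as an $A^\beta$-subalgebra of $A$); you instead analyze ``$A^\beta$ being $\beta$-strong'' with defining hypothesis $s^{-1}t \notin S_{A^\beta}$. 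By the very definition of the invariant subring, $S_{A^\beta} = S$ (every $s \in S$ satisfies $\beta_s(b1_{s^{-1}}) = b1_s$ for all $b \in A^\beta$), so the hypothesis $s^{-1}t \notin S_{A^\beta}$ is never satisfiable and ``$A^\beta$ is $\beta$-strong'' is vacuously true. Your version of (v) thus collapses to ``$A$ is $A^\beta$-separable,'' which is strictly weaker than being Galois, and the equivalence you propose to verify is false as stated. The correct condition involves $S_A$, which for an injective action equals $E(S)$, so (using $\sim\,=\,\sigma$ from $E$-unitarity) the pairs being tested are exactly those with $\sigma(s) \neq \sigma(t)$, with idempotents ranging over the small ideals $A_s \cup A_t$. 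Even after this correction, your route still requires two things you do not supply: confirmation that the cited partial-action theorems contain a ``separable $+$ $\alpha$-strong'' item at all, and a two-way translation between $\beta$-strongness of $A$ and $\alpha$-strongness of $A$, which needs genuine idempotent bookkeeping (splitting a nonzero idempotent $e \leq 1_g 1_h$ into a nonzero piece $e1_s1_t$ with $s \in g$, $t \in h$, and conversely handling the asymmetric factor in $\beta_t(a1_{t^{-1}})e = \alpha_{\sigma(t)}(a1_{\sigma(t)^{-1}})1_t e$ when $e$ lies only in $A_s$). The paper sidesteps all of this by proving (i) $\Rightarrow$ (v) and (v) $\Rightarrow$ (i) directly: Galois coordinates yield a separability idempotent and strongness; conversely, from a separability idempotent $e$ one forms $e_s = m\bigl((\mathrm{Id}_A \otimes \beta_s(-1_{s^{-1}}))(e)\bigr)$ and uses $\beta$-strongness to force $e_s = 0$ for $s \notin E(S)$ and $e_s = 1_s$ for $s \in E(S)$, recovering Galois coordinates.

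A secondary problem is your justification of the (iv) bridge. The claim that an element of $PA_\beta(S)$ is ``determined by its values on a transversal of the $\sigma$-classes'' is not correct: if $s,t$ are incomparable elements of the same class, knowing $a_s$ only pins down $a_t 1_{s \wedge t}$, not $a_t$. What is true, and what the paper proves, is that $\varphi : \prod_{g \in G} A_g \to PA_\beta(S)$, $(a_g)_g \mapsto (a_{\sigma(s)}1_s)_s$, is bijective. Injectivity uses the Boolean identity $1_{\sigma(s)} = \sum_{k}\sum_{i_1 \leq \cdots \leq i_k} (-1)^{k+1} 1_{s_{i_1}} \cdots 1_{s_{i_k}}$, and surjectivity requires assembling $a_{\sigma(s)} := \sum_{k}\sum_{i_1 \leq \cdots \leq i_k} (-1)^{k+1} 1_{s_{i_1}} \cdots 1_{s_{i_k}} a_{s_{i_k}}$ from the tuple and then running a careful cancellation argument to show $\varphi$ returns the original tuple. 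This inclusion--exclusion step is the technical heart of (iii) $\Rightarrow$ (iv) and cannot be waved through; as it stands, your outline asserts the conclusion of that step rather than proving it.
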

\begin{proof}
    By Proposition \ref{propgaloissse}, $A$ is a $\beta$-Galois extension of $A^\beta$ if and only if $A$ is a $\alpha$-Galois extension of $A^\alpha = A^\beta$. Therefore, by \cite[Theorem 4.1]{dokuchaev2007partial} and by Lemma \ref{lemaigm}, (i) $\Leftrightarrow$ (ii) $\Leftrightarrow$ (iii). Moreover, by \cite[Theorem 5.3, Corollary 5.4]{bagio2012partial}, (i) $\Leftrightarrow$ (vi) $\Leftrightarrow$ (vii) $\Leftrightarrow$ (viii) $\Leftrightarrow$ (ix) $\Leftrightarrow$ (x) $\Leftrightarrow$ (xi). Now we shall prove the missing equivalences.

    (iii) $\Rightarrow$ (iv): By \cite[Theorem 4.1]{dokuchaev2007partial}, $A \otimes_{A^\beta} A \stackrel{\rho}{\simeq} \prod_{g \in G} A_g$ via $a \otimes b \mapsto \rho(a \otimes b) = (a\alpha_g(b1_{g^{-1}}))_{g \in G}$. We will prove that $\prod_{g \in G} A_g \simeq PA_\beta(S)$. Define
    \begin{align*}
        \varphi : \prod_{g \in G} A_g & \to PA_\beta(S) \\
        (a_g)_{g \in G} & \mapsto (a_{\sigma(s)}1_s)_{s \in S}.
    \end{align*}

    Note that $\varphi$ is well-defined, because if $s \preceq t$, then $\sigma(s) = \sigma(t)$, and so $a_s = a_{\sigma(s)}1_s = a_{\sigma(t)}1_s = a_{\sigma(t)}1_t1_s = a_t1_s$. Furthermore, it is easy to see that $\varphi$ is a homomorphism of $A$-modules. We shall see that $\varphi$ is a bijection.

    Let $(a_g)_{g \in G}, (b_g)_{g \in G} \in \prod_{g \in G} A_g$ such that $\varphi((a_g)_{g \in G}) = \varphi((b_g)_{g \in G})$. Thus $a_{\sigma(s)}1_t = b_{\sigma(s)}1_t$, for all $t \in \sigma(s)$. Hence
    \begin{align*}
        a_{\sigma(s)} & = a_{\sigma(s)}1_{\sigma(s)} = a_{\sigma(s)}\sum_{k=1}^m \sum_{i_1 \leq \cdots \leq i_k} (-1)^{k+1}1_{s_{i_1}} \cdots 1_{s_{i_k}} \\
        & = \sum_{k=1}^m \sum_{i_1 \leq \cdots \leq i_k} (-1)^{k+1}a_{\sigma(s)}1_{s_{i_1}} \cdots 1_{s_{i_k}} = \sum_{k=1}^m \sum_{i_1 \leq \cdots \leq i_k} (-1)^{k+1}b_{\sigma(s)}1_{s_{i_1}} \cdots 1_{s_{i_k}} \\
        & = b_{\sigma(s)}\sum_{k=1}^m \sum_{i_1 \leq \cdots \leq i_k} (-1)^{k+1}1_{s_{i_1}} \cdots 1_{s_{i_k}} = b_{\sigma(s)}1_{\sigma(s)} = b_{\sigma(s)},
    \end{align*}
    where $\{s_1, \ldots, s_m\} = \sigma(s)$, for all $s \in S$. Therefore, $(a_g)_{g \in G} = (b_g)_{g \in G}$.

    For the surjectivity, let $(a_s)_{s \in S} \in PA_\beta(S)$. Consider, for all $s \in S$,
    \begin{align*}
        a_{\sigma(s)} & = \sum_{k=1}^m \sum_{i_1 \leq \cdots \leq i_k} (-1)^{k+1} 1_{s_{i_1}} \cdots 1_{s_{i_k}}a_{s_{i_k}},
    \end{align*}
    where $\sigma(s) = \{s_1, \ldots, s_m\}$. Then
    \begin{align*}
        \varphi((a_g)_{g \in G}) & = (a_{\sigma(s)}1_s)_{s \in S} = \left ( \sum_{k=1}^m \sum_{i_1 \leq \cdots \leq i_k} (-1)^{k+1} 1_{s_{i_1}} \cdots 1_{s_{i_k}}a_{s_{i_k}}1_s \right )_{s \in S} \\
        & \stackrel{(*)}{=} \left ( 1_sa_s \right )_{s \in S} = (a_s)_{s \in S}.
    \end{align*}
    Now, we shall prove ($*$). Let $a = (-1)^{k+1}1_{s_{i_1}} \cdots 1_{s_{i_{k}}}a_{s_{i_k}}$ be a term in the sum above, with $1 \leq k \leq m$. Then $a = (-1)^{k+1}1_{s_{i_1} \wedge \cdots \wedge s_{i_k}} a_{s_{i_1} \wedge \cdots \wedge s_{i_k}}$, by $\sigma = \sim$ and the definition of $PA_\beta(S)$. We have two possibilities: either $s \in \{s_{i_1}, \ldots, s_{i_k}\}$ or $s \notin \{s_{i_1}, \ldots, s_{i_k}\}$.
    
    If $s \notin \{ s_{i_1}, \ldots, s_{i_k} \}$ (in particular $k < m$ and if $k=1$, then $a$ is not the factor $1_sa_s$), thus $a1_s = (-1)^{k+1}1_{s_{i_1} \wedge \cdots \wedge s_{i_k} \wedge s} a_{s_{i_1} \wedge \cdots \wedge s_{i_k} \wedge s}$. Then $\{ s_{i_1}, \ldots, s_{i_k}, s \}$ is a subset of $k+1$ distinct elements of $\sigma(s)$. Therefore, $(-1)^{k+2} 1_{s_{i_1} \wedge \cdots \wedge s_{i_k} \wedge s} a_{s_{i_1} \wedge \cdots \wedge s_{i_k} \wedge s} = -a1_s$ also appears in the sum, canceling the element $a1_s$.

    If $s \in \{ s_{i_1}, \ldots, s_{i_k} \}$, then $a1_s = a$. Suppose that $k \neq 1$. Suppose also, without loss of generality, that $s_{i_k} = s$. Consider the element $b = (-1)^{k}1_{s_{i_1} \wedge \cdots \wedge s_{i_{k-1}}} a_{s_{i_1} \wedge \cdots \wedge s_{i_{k-1}}}$. Hence $b1_s = -a1_s$. Therefore these elements are also canceled in the sum.

    The only element that does not cancel is the element $a_s1_s = a_s$, and ($*$) is proved.

    Furthermore, $\varphi \circ \rho = \psi$. Indeed,
    \begin{align*}
        \varphi \circ \rho (a \otimes b) & = \varphi((a\alpha_{g}(b1_{g^{-1}}))_{g \in G}) 
         = (a\alpha_{\sigma(s)}(b1_{\sigma(s)^{-1}})1_s)_{s \in S} \\
        & = (a\alpha_{\sigma(s)}(b1_{\sigma(s)^{-1}})\beta_{s}(1_{s^{-1}}))_{s \in S} 
         = (a\alpha_{\sigma(s)}(b1_{\sigma(s)^{-1}})\alpha_{\sigma(s)}(1_{s^{-1}}))_{s \in S} \\
        & = (a\alpha_{\sigma(s)}(b1_{\sigma(s)^{-1}}1_{s^{-1}}))_{s \in S} 
         = (a\alpha_{\sigma(s)}(b1_{s^{-1}}))_{s \in S} \\
        & = (a\beta_{s}(b1_{s^{-1}}))_{s \in S} 
         = \psi(a \otimes b).
    \end{align*}

    (iv) $\Rightarrow$ (i): We have $\left (\sum_{e \in E(S)} \delta_{e,s} 1_s\right)_{s \in S} \in PA_\beta(S)$, because if $e \leq f$ in $E(S)$, then $1_e = 1_e1_f$ and no $s \in S \setminus E(S)$ satisfies $e \leq s$, for $e \in E(S)$. Since $\psi$ is an isomorphism, there exists $\sum_{i=1}^n x_i \otimes y_i \in A \otimes_{A^\beta} A$ such that
    \begin{align*}
        \left ( \sum_{e \in E(S)} \delta_{e,s}1_s \right )_{s \in S} = \psi \left ( \sum_{i=1}^n x_i \otimes y_i \right ) = \left ( \sum_{i=1}^n x_i\beta_s(y_i1_{s^{-1}}) \right )_{s \in S}.
    \end{align*}

    (i) $\Rightarrow$ (v): Consider a Galois coordinate system $\{x_i,y_i\}_{i=1}^n$ of $A$ over $A^\beta$. Let $s = \sum_{i=1}^n x_i \otimes y_i \in A \otimes_{A^\beta} A$. We shall prove that $s$ is the idempotent of separability of $A$ over $A^\beta$. Indeed, for all $e \in E(S)$, 
    \begin{align*}
        1_A = 1_{\sigma(e)} = \sum_{i=1}^n x_i\alpha_{\sigma(e)}(y_i1_{\sigma(e)}) = \sum_{i=1}^n x_iy_i1_{\sigma(e)} = \sum_{i=1}^n x_iy_i.
    \end{align*}

\noindent \textbf{Claim:} for all $1 \leq i \leq n$ and $a \in A$, $ax_i = \sum_{j=1}^n x_j\tr^\sigma_{\beta}(y_jax_i)$. 

In fact,
    \begin{align*}
        \sum_{j=1}^n x_j\tr^\sigma_{\beta}(y_jax_i) & = \sum_{j=1}^n x_j \left ( \sum_{g \in G} \alpha_g(y_jax_i1_{g^{-1}}) \right ) 
         = \sum_{g \in G} \left (  \sum_{j=1}^n x_j\alpha_g(y_j1_{g^{-1}}) \right )\alpha_g(ax_i1_{g^{-1}}) \\
        & = 1_A\alpha_{1_G}(ax_i1_{1_G}) = ax_i.
    \end{align*}

    Thus, for all $a \in A$,
    \begin{align*}
        \sum_{i=1}^n ax_i \otimes y_i & = \sum_{i=1}^n\sum_{j=1}^n x_j\tr_\beta^\sigma(y_jax_i) \otimes y_i 
         = \sum_{i=1}^n\sum_{j=1}^n x_j \otimes \tr_\beta^\sigma(y_jax_i)y_i \\
        & = \sum_{j=1}^n x_j \otimes \sum_{i=1}^n\sum_{g \in G} \alpha_g(y_ja1_{g^{-1}})\alpha_g(x_i1_{g^{-1}})y_i \\
        & = \sum_{j=1}^n x_j \otimes \sum_{g \in G} \alpha_g(y_ja1_{g^{-1}})\left ( \sum_{i=1}^n \alpha_g(x_i1_{g^{-1}})y_i \right )
        \end{align*}

        \begin{align*}
        & = \sum_{j=1}^n x_j \otimes \alpha_{1_G}(y_ja1_{1_G})1_A 
         = \sum_{j=1}^n x_j \otimes y_ja.
    \end{align*}

    Therefore, $A$ is $A^\beta$-separable. To show that $A$ is $\beta$-strong, let $s,t \in S$ such that $s^{-1}t \notin E(S)$. Without loss of generality, consider $e \in E(A_s)$. If $\beta_s(a1_{s^{-1}})e = \beta_t(a1_{t^{-1}})e$, for all $a \in A$, then
    \begin{align*}
        a\beta_{s^{-1}}(e) = \beta_{s^{-1}}(\beta_{t}(a1_{t^{-1}})e) = \beta_{s^{-1}t}(a1_{t^{-1}s})\beta_{s^{-1}}(e),
    \end{align*}
    for all $a \in A$. In particular,
    \begin{align*}
        \beta_{s^{-1}}(e) & = 1_A\beta_{s^{-1}}(e) 
         = \sum_{i=1}^n x_iy_i\beta_{s^{-1}}(e) 
         = \sum_{i=1}^n x_i\beta_{s^{-1}t}(y_i1_{t^{-1}s})\beta_{s^{-1}}(e) 
         = 0 \cdot \beta_{s^{-1}}(e) = 0,
    \end{align*}
    since $s^{-1}t \notin E(S)$. Thus, $\beta_{s^{-1}}(e) = 0$, and once $\beta_{s^{-1}}$ is an isomorphism, $e = 0$. Thus $A$ is $\beta$-strong.
    
    (v) $\Rightarrow$ (i): Let $e = \sum_{i=1}^n x_i \otimes y_i \in A \otimes_{A^\beta} A$ the idempotent of separability of $A$ over $A^\beta$. That is, $m(e) = \sum_{i=1}^n x_iy_i = 1_A$ and $(a \otimes 1_A - 1_A \otimes a)e = 0$, for all $a \in A$. For each $s \in S$, define $e_s = m((\text{Id}_A \otimes \beta_s(-1_{s^{-1}}))(e)) \in A_s$. We have that $\text{Id}_A \otimes \beta_s(-1_{s^{-1}})$ is as $(A \otimes_{A^\beta} A)$-endomorphism. Moreover, since $A$ is commutative, $m$ is a ring homomorphism. Thus, for each $s \in S$, 
    \begin{align*}
        e_s^2 & = m((\text{Id}_A \otimes \beta_s(-1_{s^{-1}}))(e)) \cdot m((\text{Id}_A \otimes \beta_s(-1_{s^{-1}}))(e)) \\
        & = m((\text{Id}_A \otimes \beta_s(-1_{s^{-1}}))(e) \cdot (\text{Id}_A \otimes \beta_s(-1_{s^{-1}}))(e)) \\
        & = m((\text{Id}_A \otimes \beta_s(-1_{s^{-1}}))(e^2)) = m((\text{Id}_A \otimes \beta_s(-1_{s^{-1}}))(e)) = e_s.
    \end{align*}

    That is, $e_s$ is an idempotent of $A_s$. On the other hand, for all $a \in A$, 
    \begin{align*}
        \beta_{ss^{-1}}(a1_{ss^{-1}})e_s & = ae_s = am((\text{Id}_A \otimes \beta_s(-1_{s^{-1}}))(e)) = am((\text{Id}_A \otimes \beta_s(-1_{s^{-1}}))(e))1_s \\
        & = (a \otimes 1_s) \cdot m((\text{Id}_A \otimes \beta_s(-1_{s^{-1}}))(e)) = m((a \otimes 1_s) \cdot (\text{Id}_A \otimes \beta_s(-1_{s^{-1}}))(e)) \\
        & = m((\text{Id}_A \otimes \beta_s(-1_{s^{-1}}))(a \otimes 1_A) \cdot (\text{Id}_A \otimes \beta_s(-1_{s^{-1}}))(e)) \\
        & = m((\text{Id}_A \otimes \beta_s(-1_{s^{-1}}))((a \otimes 1_A)e)) = m((\text{Id}_A \otimes \beta_s(-1_{s^{-1}}))((1_A \otimes a)e)) \\
        & = m((\text{Id}_A \otimes \beta_s(-1_{s^{-1}}))(1_A \otimes a) \cdot (\text{Id}_A \otimes \beta_s(-1_{s^{-1}}))(e)) \\
        & = m((1_A \otimes \beta_s(a1_{s^{-1}})) \cdot (\text{Id}_A \otimes \beta_s(-1_{s^{-1}}))(e)) \\
        & = m((1_A \otimes \beta_s(a1_{s^{-1}}))) \cdot m((\text{Id}_A \otimes \beta_s(-1_{s^{-1}}))(e)) \\
        & = \beta_s(a1_{s^{-1}})m((\text{Id}_A \otimes \beta_s(-1_{s^{-1}}))(e)) 
         = \beta_s(a1_{s^{-1}})e_s.
    \end{align*}

    Since $A$ is $\beta$-strong, if $s \notin E(S)$, then $e_s = 0$. But
    \begin{align*}
        e_s & = m((\text{Id}_A \otimes \beta_s(-1_{s^{-1}}))(e)) 
         = \sum_{i=1}^n x_i\beta_s(y_i1_{s^{-1}}).
    \end{align*}

    On the other hand, if $s \in E(S)$, then $s = ss^{-1}$ and
    \begin{align*}
        e_s & = m((\text{Id}_A \otimes \beta_s(-1_{s^{-1}}))(e)) 
         = \sum_{i=1}^n x_i\beta_s(y_i1_{s^{-1}}) 
         = \sum_{i=1}^n x_iy_i1_s 
         = 1_A1_s = 1_s.
    \end{align*}

    Therefore, $\{x_i,y_i\}_{i=1}^n$ is a Galois coordinate system of $A$ over $A^\beta$.
\end{proof}

\section{Galois correspondence}

Throughout this section, assume that $A$ is a commutative ring and $S$ is an $E$-unitary inverse semigroup acting injectively on $A$ via a unital action $\beta = (A_s,\beta_s)_{s \in S}$.  Recall that the finiteness of
$S$ was previously established.

Assume that $T$ is an inverse subsemigroup of $S$ and that $P \subseteq T$ is a compatible subset such that the join $u = \bigvee P$ exists. Since $\{\beta_s : s \in P\}$ is a compatible subset of $\text{Iso}_{pu}(A)$, the element $\sum_{s \in P} \beta_s$ is defined. For each $s \in P$, we have $\beta_s \preceq \beta_u$ since $s \preceq u$. By the definition of the join, this implies that $\sum_{s \in P} \beta_s \preceq \beta_u$, but the equality does not necessarily holds.

\begin{defi}
    Let $T$ be an inverse subsemigroup of $S$. We say that $T$ is \emph{$\beta$-complete} if: 
    \begin{itemize} 
        \item[(i)] $T$ is full;  
        \item[(ii)] for any compatible subset $P \subseteq T$ such that $u = \bigvee P$ exists and such that $\beta_u = \sum_{s \in P} \beta_s$, it follows that $u \in T$.
    \end{itemize}
\end{defi}

When $S$ is a finite group, every subgroup of $S$ is $\beta$-complete, because in this case the natural partial order is the equality.

Now recall that $S_B = \{ s \in S : \beta_s(b1_{s^{-1}}) = b1_s, \text{ for all } b \in B \}$, as defined in Section 2.3.
    
\begin{prop}
    Let $B$ be an $A^\beta$-subalgebra of $A$. Then $S_B$ is a $\beta$-complete inverse subsemigroup of $S$.
\end{prop}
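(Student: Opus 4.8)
The plan is to verify, in order, the three requirements in the definition of a $\beta$-complete inverse subsemigroup: that $S_B$ is an inverse subsemigroup of $S$, that it is full, and that it satisfies the join-closure condition (ii).

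First I would show $S_B$ is closed under inverses and under products. For $s \in S_B$, applying $\beta_{s^{-1}}$ to the defining equation $\beta_s(b1_{s^{-1}}) = b1_s$ and using $\beta_{s^{-1}}\circ\beta_s = \beta_{s^{-1}s} = \mathrm{id}_{A_{s^{-1}}}$ (with $1_{s^{-1}s}=1_{s^{-1}}$) yields $\beta_{s^{-1}}(b1_s) = b1_{s^{-1}}$, so $s^{-1}\in S_B$. For $s,t\in S_B$ I would compute $\beta_{st}(b1_{(st)^{-1}}) = \beta_s\big(\beta_t(b1_{(st)^{-1}})\big)$, using the standard identities for unital actions $1_{(st)^{-1}}=\beta_{t^{-1}}(1_t1_{s^{-1}})\preceq 1_{t^{-1}}$ and $1_{st}=\beta_s(1_{s^{-1}}1_t)$. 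Pushing $b1_{(st)^{-1}}$ through $\beta_t$ (using $\beta_t(b1_{t^{-1}})=b1_t$) produces $b1_t1_{s^{-1}}$, and then through $\beta_s$ (using $\beta_s(b1_{s^{-1}})=b1_s$) produces $b1_s1_{st}=b1_{st}$; hence $st\in S_B$.

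Fullness is immediate: for $e\in E(S)$ we have $e^{-1}=e$ and $\beta_e=\mathrm{id}_{A_e}$, so $\beta_e(b1_e)=b1_e$ for every $b\in B$, giving $E(S)\subseteq S_B$.

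The main work, and the step I expect to be the obstacle, is condition (ii). Let $P\subseteq S_B$ be compatible with $u=\bigvee P$ existing and $\beta_u=\sum_{s\in P}\beta_s$. Since the image of a join is the sum of the images, $A_u=\mathrm{im}(\beta_u)=\sum_{s\in P}A_s$ and $A_{u^{-1}}=\sum_{s\in P}A_{s^{-1}}$, so that $1_u$ (resp. $1_{u^{-1}}$) is the boolean sum of the idempotents $\{1_s\}_{s\in P}$ (resp. $\{1_{s^{-1}}\}_{s\in P}$). I would reduce the target equality $\beta_u(b1_{u^{-1}})=b1_u$ to testing it against each $1_s$: using $\beta_s\preceq\beta_u$ (so $\beta_u$ restricts to $\beta_s$ on $A_{s^{-1}}$), the identity $1_s=\beta_u(1_{s^{-1}})$, and multiplicativity of $\beta_u$, one obtains $\beta_u(b1_{u^{-1}})1_s=\beta_u(b1_{u^{-1}}1_{s^{-1}})=\beta_u(b1_{s^{-1}})=\beta_s(b1_{s^{-1}})=b1_s$ for every $s\in P$. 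Finally, since both $\beta_u(b1_{u^{-1}})$ and $b1_u$ lie in $A_u$ and $1_u$ is the boolean sum of the $1_s$, two elements of $A_u$ that agree after multiplication by each $1_s$ must agree after multiplication by $1_u$ (expand the boolean sum and factor out $1_s$ termwise), hence are equal. This gives $\beta_u(b1_{u^{-1}})=b1_u$, so $u\in S_B$, completing the proof. The delicate points are justifying the decompositions of $1_u$ and $1_{u^{-1}}$ from the hypothesis $\beta_u=\sum_{s\in P}\beta_s$ and the concluding density argument that testing against every $1_s$ suffices.
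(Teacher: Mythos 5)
Your proof is correct, but it takes a genuinely different route from the paper's at both substantive steps. For closure under products, the paper first proves that $S_B$ is an order ideal and then invokes the restricted-product factorization $st = uv$ with $u \preceq s$, $v \preceq t$, $u^{-1}u = vv^{-1}$ from \cite[Theorem 3.1.2(3)]{lawson1998inverse}; you instead compute directly with the identities $1_{(st)^{-1}} = \beta_{t^{-1}}(1_t1_{s^{-1}})$ and $1_{st} = \beta_s(1_{s^{-1}}1_t)$, which are legitimately available here because the paper's definition of an action stipulates $A_{(st)^{-1}} = \beta_t^{-1}(A_t \cap A_{s^{-1}})$. For the join-closure condition, the paper expands $1_{u^{-1}}$ as the boolean sum of the $1_{s^{-1}}$, rewrites each product $1_{s_{i_1}^{-1}}\cdots 1_{s_{i_j}^{-1}}$ as the idempotent attached to a meet $v_Q = \bigwedge Q$ (this is where compatibility, \cite[Lemma 1.4.11]{lawson1998inverse}, and the order-ideal property of $S_B$ enter, the latter to guarantee $v_Q \in S_B$), and then applies $\beta_u|_{A_{v_Q^{-1}}} = \beta_{v_Q}$ termwise. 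You avoid meets altogether: you check $\beta_u(b1_{u^{-1}})1_s = b1_s$ for each $s \in P$ using $\beta_u|_{A_{s^{-1}}} = \beta_s$, and conclude equality from the boolean expansion of $1_u$, every term of which contains some factor $1_s$; this ``test against each $1_s$'' reduction is sound, and the decompositions $A_u = \sum_{s\in P}A_s$ and $A_{u^{-1}} = \sum_{s\in P}A_{s^{-1}}$ that it rests on do follow from the hypothesis $\beta_u = \sum_{s\in P}\beta_s$ together with the construction of joins in $\text{Iso}_{pu}(A)$. The net comparison: your argument is more elementary and self-contained, bypassing the meet machinery entirely (in particular it nowhere needs $\sigma = \sim$, which the paper invokes via $E$-unitarity to get compatibility of subsets of $\sigma$-classes), whereas the paper's route yields as a byproduct that $S_B$ is an order ideal, a fact that is convenient elsewhere but is not required by the statement being proved.
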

\begin{proof}
    We start by showing that $S_B$ is an order ideal. Let $s \in S_B$ and $u \preceq s$. Hence:
    \begin{align*}
        \beta_u(b1_{u^{-1}}) & = \beta_u(b1_{u^{-1}}1_{s^{-1}}) = \beta_u(b1_{s^{-1}})\beta_u(1_{u^{-1}}) 
         = \beta_{s}(b1_{s^{-1}})1_u = b1_s1_u = b1_u,
    \end{align*}
    for all $b \in B$. Thus $u \in S_B$.

    Let $s,t \in S_B$. To prove that $S_B$ is an inverse subsemigroup of $S$, we shall verify that $st,s^{-1} \in S$. By \cite[Theorem 3.1.2(3)]{lawson1998inverse}, there are $u,v \in S$ such that $u \preceq s$, $v \preceq t$, $st = uv$ and $u^{-1}u = vv^{-1}$. But $u,v \in S_B$, since $S_B$ is an order ideal. Then
    \begin{align*}
        \beta_{st}(b1_{(st)^{-1}}) & = \beta_{uv}(b1_{(uv)^{-1}}) = \beta_u(\beta_v(b1_{v^{-1}})1_{u^{-1}}) \\
        & = \beta_u(b1_v1_{u^{-1}}) = \beta_u(b1_{vv^{-1}}1_{u^{-1}u}) \\
        & = \beta_u(b1_{u^{-1}u}) = \beta_u(b1_{u^{-1}}) \\
        & = b1_u = b1_{uu^{-1}} = b1_{u(u^{-1}u)u^{-1}} \\
        & = b1_{u(vv^{-1})u^{-1}} = b1_{(uv)(uv)^{-1}} = b1_{uv} = b1_{st},
    \end{align*}
    for all $b \in B$.

    Moreover, for each $b \in B$, $\beta_{s^{-1}}(b1_s) = \beta_{s^{-1}}(\beta_s(b1_{s^{-1}})) = b1_{s^{-1}}$. We also have that $S_B$ is full, because given elements $e \in E(S)$ and $b \in B$, $\beta_e(b1_{e^{-1}}) = \beta_e(b1_e) = \text{Id}_{A_e}(b1_e) = b1_e.$

    Finally, let $P \subseteq S_B$ such that there is $u = \bigvee P$ and $\beta_u = \sum_{s \in P} \beta_s$. Let $P = \{s_1, \ldots, s_n\}$. Given an element $b \in B$,
   \begin{align*}
        \beta_u(b1_{u^{-1}}) & = \beta_u\left(b\left( \sum_{1 \leq j \leq n}\sum_{i_1 \leq \cdots \leq i_j} (-1)^{j+1}1_{s_{i_1}^{-1}} \cdots 1_{s_{i_j}^{-1}} \right)\right) \\
        & = \beta_u\left(\sum_{1 \leq j \leq n}\sum_{i_1 \leq \cdots \leq i_j} b(-1)^{j+1}1_{s_{i_1}^{-1}} \cdots 1_{s_{i_j}^{-1}} \right) \\
    \end{align*}

    \begin{align*}
        & = \sum_{1 \leq j \leq n}\sum_{i_1 \leq \cdots \leq i_j} (-1)^{j+1}\beta_u(b1_{s_{i_1}^{-1}} \cdots 1_{s_{i_j}^{-1}}) \\
        & = \sum_{1 \leq j \leq n}\sum_{i_1 \leq \cdots \leq i_j} (-1)^{j+1}\beta_u(b1_{s_{i_1}^{-1} \wedge \cdots \wedge s_{i_j}^{-1}}) \\
        & = \sum_{1 \leq j \leq n}\sum_{i_1 \leq \cdots \leq i_j} (-1)^{j+1}\beta_u(b1_{(s_{i_1} \wedge \cdots \wedge s_{i_j})^{-1}}) \\
    \end{align*}

    Let $Q = \{s_{i_1}, \ldots, s_{i_j}\}$. Then $s_{i_k} \preceq u$, for all $1 \leq k \leq j$. Therefore, all the elements of $Q$ belongs to the same $\sigma$-congruence class. Furthermore, since $S$ is $E$-unitary, by Theorem \ref{teosigmaigsim} we have $\sigma = \sim$, so $Q$ is a compatible subset of $S$.  Thus, by applying \cite[Lemma 1.4.11]{lawson1998inverse} and using a straightforward argument that confirms the meet is defined for a finite number of elements, it follows that $v_{Q} := \bigwedge_{s \in Q} s$ is defined. Since $v_Q \preceq s$, for all $s \in Q$, we have $v_Q \in S_B$. Moreover, $v_Q \preceq u$, whence $\beta_u|_{A_{v_Q^{-1}}} = \beta_{v_Q}$. Thus,
    \begin{align*}
        \beta_u(b1_{u^{-1}}) & = \sum_{1 \leq j \leq n}\sum_{i_1 \leq \cdots \leq i_j} (-1)^{j+1}\beta_{s_{i_1} \wedge \cdots \wedge s_{i_j}}(b1_{(s_{i_1} \wedge \cdots \wedge s_{i_j})^{-1}}) \\ 
        & = \sum_{1 \leq j \leq n}\sum_{i_1 \leq \cdots \leq i_j} (-1)^{j+1}b1_{s_{i_1} \wedge \cdots \wedge s_{i_j}} 
         = b\sum_{1 \leq j \leq n}\sum_{i_1 \leq \cdots \leq i_j} (-1)^{j+1}1_{s_{i_1} \wedge \cdots \wedge s_{i_j}} 
         = b1_u,
    \end{align*}
    that is, $u \in S_B$.

    Thus, by applying \cite[Lemma 1.4.11]{lawson1998inverse} and using a straightforward argument that confirms the meet is defined for a finite number of elements, it follows that \(v_Q := \bigwedge_{s \in Q} s\) is well-defined.
    
\end{proof}

From now on, we will assume that $A_s \neq 0$, for all $s \in S$.

\begin{theorem} \label{teoremagalois1}
    Suppose that $A$ is a $\beta$-Galois extension of $A^\beta$. Let $T$ be a $\beta$-complete inverse subsemigroup of $S$. Then:
    \begin{enumerate}
        \item[(i)] $\beta_T = (A_t,\beta_t)_{t \in T}$ is an action of $T$ in $A$ and $A$ is $\beta_T$-Galois over $B := A^{\beta_T}$.

        \item[(ii)] $B$ is $A^\beta$-separable.

        \item[(iii)] $B$ is $\beta$-strong.
        
        \item[(iv)] $T = S_B$.
    \end{enumerate}
\end{theorem}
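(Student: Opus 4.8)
The plan is to prove the four parts in the stated order, since each later part leans on the structural facts established earlier. For part (i), the task is to show that restricting $\beta$ to $T$ yields a genuine action of $T$ on $A$, and that $A$ is $\beta_T$-Galois over $B = A^{\beta_T}$. The action axioms $\beta_t \circ \beta_{t'} = \beta_{tt'}$ hold automatically by restriction, so the only substantive point is the covering condition $A = \sum_{e \in E(T)} A_e$; this follows precisely because $T$ is \emph{full}, so $E(T) = E(S)$ and hence $\sum_{e \in E(T)} A_e = \sum_{e \in E(S)} A_e = A$. To produce a Galois coordinate system for $\beta_T$, I would take a Galois coordinate system $\{x_i, y_i\}_{i=1}^n$ for $\beta$ and verify the defining identity $\sum_i x_i \beta_t(y_i 1_{t^{-1}}) = \sum_{e \in E(T)} 1_e \delta_{e,t}$ directly: for $t \in T$ it is simply a subset of the equations already known to hold for $\beta$, and again fullness guarantees the right-hand side matches.

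For parts (ii) and (iii), I would apply the equivalence theorem, Theorem \ref{teoequivseminvgal}, to the action $\beta_T$. Since part (i) shows $A$ is $\beta_T$-Galois over $B$, the equivalence (i) $\Leftrightarrow$ (v) of that theorem applied to $\beta_T$ immediately gives that $A$ is $B$-separable and $\beta_T$-strong. Part (ii) then requires upgrading ``$A$ is $B$-separable'' together with ``$A$ is $A^\beta$-separable'' (from the global Galois hypothesis via Theorem \ref{teoequivseminvgal}(v)) to the conclusion that $B$ is $A^\beta$-separable; this is the standard transitivity-of-separability argument for the tower $A^\beta \subseteq B \subseteq A$, using the separability idempotents and the fact that a separable subalgebra sits as a direct summand. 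For part (iii), I would show $B$ is $\beta$-strong by unwinding the definition: given $s,t \in S$ with $s^{-1}t \notin S_B$ and a nonzero idempotent $e \in A_s \cup A_t$, I must produce $b \in B$ with $\beta_s(b1_{s^{-1}})e \neq \beta_t(b1_{t^{-1}})e$. The key observation is that $S_B = T$ (anticipating part (iv)) lets me translate $s^{-1}t \notin S_B$ into the $\beta_T$-strong condition, and then invoke the $\beta_T$-strongness just obtained.

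Part (iv) is the crux and I expect it to be the main obstacle. The inclusion $T \subseteq S_B$ is easy: by definition $B = A^{\beta_T}$ consists of elements fixed by every $\beta_t$, $t \in T$, so each $t \in T$ fixes all of $B$, giving $t \in S_B$. The hard direction is $S_B \subseteq T$. Here I would argue that if $s \in S_B$ but $s \notin T$, then since $T$ is $\beta$-complete I can derive a contradiction with the $\beta_T$-strong property of $B$ established in part (iii). Concretely, I would compare $\beta_s$ against the elements of $T$ lying in the same $\sigma$-class, and use completeness of $T$ (condition (ii) of $\beta$-completeness: closure under those joins $u$ with $\beta_u = \sum_{s \in P}\beta_s$) together with fullness to force $s$ into $T$. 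The delicate part is controlling the idempotents $1_s$ and the meets $s \wedge t$ so that an element of $S_B \setminus T$ would violate the separating condition defining $\beta$-strongness; this is where the hypothesis $A_s \neq 0$ for all $s$ is essential, since it guarantees the relevant idempotents are nonzero and the strong condition has real content. I would structure this final argument by first identifying, for $s \in S_B$, the join of all $t \in T$ with $t \preceq s$ in the appropriate sense, showing this join equals $s$ via the separating property, and concluding $s \in T$ by $\beta$-completeness.
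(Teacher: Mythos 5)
Parts (i) and (ii) of your proposal are essentially the paper's argument: fullness gives $E(T)=E(S)$, so the same Galois coordinate system works for $\beta_T$, and $B$ is shown to be $A^\beta$-separable from the two facts that $A$ is finitely generated projective over $B$ (Theorem \ref{teoequivseminvgal}(ii) applied to $\beta_T$) and that $A$ is $A^\beta$-separable; your only slip there is attributing the splitting $A = B \oplus \ker \tr^\sigma_{\beta_T}$ to separability, when it really comes from surjectivity of the trace (Theorem \ref{teoequivseminvgal}(ix)).

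Parts (iii) and (iv), however, contain genuine gaps. First, your argument is circular: your (iii) ``anticipates'' (iv) to translate $s^{-1}t \notin S_B$ into a condition about $T$, while your (iv) invokes ``the $\beta_T$-strong property of $B$ established in part (iii).'' Second, even ignoring the ordering, the property you extract from Theorem \ref{teoequivseminvgal}(v) applied to $\beta_T$ is that \emph{$A$} is $\beta_T$-strong: separating elements are drawn from $A$, for pairs $s,t$ ranging over $T$. What (iii) demands is that \emph{$B$} is $\beta$-strong: separating elements must lie in $B$, for pairs $s,t$ ranging over all of $S$ with $s^{-1}t \notin S_B$. These quantifiers do not match, and nothing in your sketch bridges them. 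The paper closes this gap by an explicit construction: choosing $c$ with $\tr^\sigma_{\beta_T}(c)=1_A$ and a Galois system $\{x_i,y_i\}$, it forms $x_i' = \tr^\sigma_{\beta_T}(cx_i)$, $y_i' = \tr^\sigma_{\beta_T}(y_i) \in B$ and proves that $\sum_i x_i'y_i' = 1_A$ while $\sum_i x_i'\beta_s(y_i'1_{s^{-1}}) = 0$ for every $s \notin T$; these two identities are the heart of (iii), they only use the easy inclusion $T \subseteq S_B$, and your proposal has no substitute for them. Third, your mechanism for (iv) --- showing that the join of $\{t \in T : t \preceq s\}$ equals $s$ ``via the separating property'' --- cannot work even if (iii) were available: for $s \in S_B$ and $t \in T \subseteq S_B$ one has $s^{-1}t \in S_B$ (since $S_B$ is an inverse subsemigroup), so the $\beta$-strong condition says nothing at all about precisely the pairs you would need to separate. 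The paper proves (iv) independently of (ii) and (iii): it applies Theorem \ref{teoequivseminvgal}(iv) to both $T$ and $S_B$ to get isomorphisms $\psi$ and $\psi'$, picks $t \in S_B \setminus T$ minimal for $\preceq$, and builds an explicit nonzero element of $PA_{\beta_{S_B}}(S_B)$ supported on $\sigma(t)$ (this is where $A_s \neq 0$ enters) whose preimage under $\psi'$ is annihilated by $\psi$, a contradiction; $\beta$-completeness is used only to exclude the case where $\sum_{u \in T,\, u \prec t}\beta_u$ coincides with $\beta_t$. Your instinct about which hypotheses matter is right, but the actual engine of the proof --- the module isomorphisms onto $PA_\beta(S)$ and the trace-built coordinate system inside $B$ --- is missing from your proposal.
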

\begin{proof}
    (i): The first statement is clear. Let $\{x_i,y_i\}_{i=1}^n$ be a Galois coordinate system of $A$ over $A^\beta$, that is, $\sum_{i=1}^n x_i\beta_s(y_i1_{s^{-1}}) = \sum_{e \in E(S)} 1_e\delta_{e,s}$, for all $s \in S$. In particular, as $E(S) = E(T)$, we have that $\sum_{i=1}^n x_i\beta_t(y_i1_{s^{-1}}) = \sum_{e \in E(T)} 1_e\delta_{e,t}$,
    for all $t \in T$, that is, $\{x_i,y_i\}_{i=1}^n$ is also a Galois coordinate system of $A$ over $B = A^{\beta_T}$.

    (ii): By (i), $A$ is a $\beta_T$-Galois extension of $B$. Thus, by Theorem \ref{teoequivseminvgal}(ii), $A$ is a finitely generated projective $B$-module. Therefore, there exists $p > 0$ such that $B^p = A \oplus L$, where $L$ is an appropriate $B$-module.

    Observe that $A$ and $B$ are $A^\beta$-modules via multiplication. Moreover, $A \otimes_{A^\beta} A$ is a projective $B \otimes_{A^\beta} B$-module, because
    \begin{align*}
        (B \otimes_{A^\beta} B)^{p^2} = B^p \otimes_{A^\beta} B^p = (A \oplus L) \otimes_{A^\beta} (A \oplus L) = (A \otimes_{A^\beta} A) \oplus M,
    \end{align*}
    where $M = (A \otimes_{A^\beta} L) \oplus (L \otimes_{A^\beta} A) \oplus (L \otimes_{A^\beta} L)$. Furthermore, as $A$ is $A^\beta$-separable, there are $q > 0$ and an $(A \otimes_{A^\beta} A)$-module $N$ such that $(A \otimes_{A^\beta} A)^q = A \oplus N$. Therefore,
    \begin{align*}
        (B \otimes_{A^\beta} B)^{p^2q} = (A \otimes_{A^\beta} A)^q \oplus M^q = A \oplus N \oplus M^q,
    \end{align*}
    and so $A$ is a projective $(B \otimes_{A^\beta} B)$-module.

    Furthermore, $B$ is a direct summand of $A$ as a $B$-module. Then
    \begin{align*}
        (B \otimes_{A^\beta} B)^{p^2q} = A \oplus N \oplus M^q = B \oplus \ker \tr^\sigma_{\beta_T} \oplus N \oplus M^q,
    \end{align*}
    showing that $B$ is a projective $(B \otimes_{A^\beta} B)$-module, that is, $B$ is a separable $A^\beta$-algebra.

    (iii): Since $A$ is $\beta_T$-Galois over $B$, there exists $c \in A$ such that $\tr^\sigma_{\beta_T}(c) = 1_A$ by Theorem \ref{teoequivseminvgal}(ix). Let $\alpha'$ the partial action of $G' = T/\sigma$ on $A$ constructed as in Theorem \ref{teoconstracparcsigma}. Thus, $A'_{\sigma(t)} = \sum_{s \in \sigma(t) \cap T} A_s$ and $\alpha'_{\sigma(t)} = \sum_{s \in \sigma(t) \cap T} \beta_{s} = \alpha_{\sigma(t)}|_{A'_{\sigma(t)^{-1}}}$, for all $t \in T$.

    Let $\{x_i,y_i\}_{i=1}^n$ be a $\alpha$-Galois coordinate system of $A$ on $A^\alpha$. In particular, $\{x_i,y_i\}_{i=1}^n$ is a $\alpha'$-Galois coordinate system of of $A$ on $A^{\alpha'} = A^{\beta_T} = B$ by item (i) and by Proposition \ref{propgaloissse}. 
    
    Define $x_i' = \tr^\sigma_{\beta_{T}}(cx_i)$ and $y_i' = \tr^\sigma_{\beta_{T}}(y_i)$, for $1 \leq i \leq n$. then, $x_i',y_i' \in B$, for all $1 \leq i \leq n$.

    \noindent \textbf{Claim 1:} $\displaystyle \sum_{i=1}^n x_i'y_i' = 1_A$.

    Indeed,
    \begin{align*}
        \sum_{i=1}^n x_i'y_i' & = \sum_{i=1}^n \left ( \sum_{g \in G'} \alpha_g'(cx_i1_{h^{-1}}) \right ) \left ( \sum_{h \in G'} \alpha_h'(y_i1_{h^{-1}}) \right ) 
         = \sum_{i=1}^n \sum_{g,h \in G'} \alpha_g'(cx_i \alpha_{g^{-1}}'(\alpha_{h}'(y_i1_{h^{-1}})1_{g})) \\
        & = \sum_{i=1}^n \sum_{g,h \in G'} \alpha_g'(cx_i \alpha_{g^{-1}h}'(y_i1_{h^{-1}g})1_{g^{-1}})
         = \sum_{g,h \in G'} \alpha_g' \left (c \left ( \sum_{i=1}^n x_i \alpha_{g^{-1}h}'(y_i1_{h^{-1}g}) \right ) 1_{g^{-1}} \right ) \\
        & = \sum_{g \in G'} \alpha_g' \left (c 1_A 1_{g^{-1}} \right ) 
         = \sum_{g \in G'} \alpha_g' \left (c1_{g^{-1}} \right ) 
         = \tr_{\alpha'}(c) = \tr^\sigma_{\beta_T}(c) = 1_A.
    \end{align*}

\noindent \textbf{Claim 2:} $\displaystyle \sum_{i=1}^n x_i'\beta_s(y_i'1_{s^{-1}}) = \begin{cases}
    1_s, \text{ if } s \in T, \\
    0, \text{ otherwise.}
\end{cases}$

Since $x_i',y_i' \in B = A^{\beta_T}$, we have $\sum_{i=1}^n x_i'\beta_s(y_i'1_{s^{-1}}) = \sum_{i=1}^n x_i'y_i'1_s = 1_A1_s = 1_s$, for all $s \in T$.

If $s \notin T$, then
\begin{align*}
    \sum_{i=1}^n x_i'\beta_s(y_i'1_{s^{-1}}) & = \sum_{i=1}^n \sum_{g \in G'} \alpha_g'(cx_i1_{g^{-1}}) \beta_s \left ( \sum_{h \in G'} \alpha_h'(y_i1_{h^{-1}})1_{s^{-1}} \right ) \\
    & = \sum_{i=1}^n \sum_{g \in G'} \alpha_g'(cx_i1_{g^{-1}}) \alpha_{\sigma(s)} \left ( \sum_{h \in G'} \alpha_h'(y_i1_{h^{-1}})1_{\sigma(s)^{-1}}1_{s^{-1}} \right ) \\
    & = \sum_{i=1}^n \sum_{g \in G'} \alpha_g(cx_i1_{g^{-1}}) \alpha_{\sigma(s)} \left ( \sum_{h \in G'} \alpha_h(y_i1_{h^{-1}})1_{\sigma(s)^{-1}}1_{s^{-1}} \right ) \\
    & = \sum_{i=1}^n \sum_{g \in G'} \alpha_g(cx_i1_{g^{-1}}) \left ( \sum_{h \in G'} \alpha_{\sigma(s)h}(y_i1_{h^{-1}\sigma(s)^{-1}})1_{\sigma(s)} \right )1_s
\end{align*}

\begin{align*}
    & = \sum_{i=1}^n \sum_{g \in G'} \alpha_g(cx_i1_{g^{-1}}) \left ( \sum_{h \in G'} \alpha_{\sigma(s)h}(y_i1_{h^{-1}\sigma(s)^{-1}}) \right )1_s \\
    & = \sum_{g,h \in G'} \alpha_g \left ( c \left ( \sum_{i=1}^n x_i\alpha_{g^{-1}\sigma(s)h}(y_i1_{h^{-1}\sigma(s)^{-1}g}) \right ) 1_{g^{-1}} \right ) 1_s \\
    & = \sum_{g,h \in G'} \alpha_g \left ( c \delta_{g^{-1}\sigma(s)h,1_{G'}} 1_{g^{-1}} \right ) 1_s.
\end{align*}

If $g^{-1}\sigma(s)h = 1_{G'}$, then $\sigma(s) = gh^{-1} \in G'$. So $s \in T$, a contradiction. Thus, $g^{-1}\sigma(s)h \neq 1_{G'}$, whence $\delta_{g^{-1}\sigma(s)h,1_{G'}} = 0$.

Now we shall prove that $B$ is $\beta$-strong. Indeed, suppose that $s,t \in S$ are such that $s^{-1}t \notin S_B$. In particular, $s^{-1}t \notin T$, because $T \subseteq S_B$ by definition.

Let $e \in A_s \cup A_t$ be an idempotent such that $e \neq 0$. Suppose that $\beta_s(b1_{s^{-1}})e = \beta_t(b1_{t^{-1}})e$, for all $b \in B$. Without loss of generality, consider $e \in A_s$. Hence
\begin{align*}
    b\beta_{s^{-1}}(e) = \beta_{s^{-1}}(b1_{t^{-1}s})\beta_{s^{-1}}(e), \; \text{ for all } b \in B.
\end{align*}

Define $e' = \beta_{s^{-1}}(e)$. Since $y_i' \in B$ and $\sum_{i=1}^n x_i'y_i' = 1_A$ by Claim 1, it follows that 
\begin{align*}
    e' = 1_Ae' = \sum_{i=1}^n x_i'y_i'e' = \sum_{i=1}^n x_i'\beta_{s^{-1}t}(y_i'1_{t^{-1}s})e' = 0e' = 0.
\end{align*}

As $\beta_{s^{-1}}$ is an isomorphism, we have $e = 0$, a contradiction. Therefore, there exists $b \in B$ such that $\beta_s(b1_{s^{-1}})e \neq \beta_{t}(b1_{t^{-1}})e$, that is, $B$ is $\beta$-strong.

(iv): Clearly $T \subseteq S_B$, because $B = A^{\beta_T}$. Observe that $A^{\beta_{S_B}} = A^{\beta_T} = B$. In fact, since $T \subseteq S_B$, then $A^{\beta_{S_B}} \subseteq A^{\beta_T}$. The reverse inclusion follows from the definition of $S_B$. So, from (i) it follows that $A$ is a $\beta_T$-Galois extension of $A^{\beta_{S_B}} = A^{\beta_T}$. By Theorem \ref{teoequivseminvgal}(iv),
    \begin{align*}
        \psi : A \otimes_B A & \to PA_{\beta_T}(T) \\
        x \otimes y & \mapsto (x\beta_t(y1_{t^{-1}}))_{t \in T}
    \end{align*}
    and
    \begin{align*}
        \psi' : A \otimes_B A & \to PA_{\beta_{S_B}}(S_B) \\
        x \otimes y & \mapsto (x\beta_t(y1_{t^{-1}}))_{t \in S_B}
    \end{align*}
    are isomorphisms of $A$-modules.

    Now suppose that there exists $t \in S_B \setminus T$. Consider such $t$ minimal with respect to the natural partial order, that is, if $s \prec t$, then $s \in T$ (which is always possible because $S$ is finite). Let $z = (\delta_{s,\sigma(t)} 1_s)_{s \in S_B} \in PA_{\beta_{S_B}}(S_B)$, where 
    \begin{align*}
        \delta_{s,\sigma(t)} = \begin{cases}
            1_A, \text{ if } s \in \sigma(t), \\
            0, \text{ otherwise.}
        \end{cases}
    \end{align*}

    Notice that $z \neq 0$, since $A_t \neq 0$, for all $t \in S$. As $\psi'$ is an isomorphism, there exists $0 \neq x = \sum_{i=1}^m a_i \otimes b_i \in A \otimes_B A$ such that $\psi'(x) = z$, that is, such that $\sum_{i=1}^m a_i\beta_s(b_i1_{s^{-1}}) = \delta_{s,\sigma(t)}1_s$, for all $s \in S_B$.
    
    Observe that there is not $s \in T$ with $t \preceq s$, because $T$ is an order ideal and in this case $t \in T$. Consider $U = \{u_1, \ldots, u_k\} = \{u \in T : u \prec t\}$. 
    
    \noindent \textbf{Case 1:} $U = \emptyset$.
    
    In this case, $\sigma(t) \cap T = 0$, because if there was $s \in \sigma(t) \cap T$, then $s \wedge t \in U$. Therefore, $\psi(x) = 0$, which is a contradiction to $x \neq 0$.

    \noindent \textbf{Case 2:} $U \neq \emptyset$.

    As $u_i \preceq t$, for all $1 \leq i \leq k$, we have $\beta_{u_i} \leq \beta_t$, for all $1 \leq i \leq k$, which implies $\beta_{u_i} \sim \beta_t$, for all $1 \leq i \leq k$. Thus, as $\text{Iso}_{pu}(A)$ is a $f$-complete inverse semigroup, we have that the join $f_t = \sum_{i=1}^k \beta_{u_i}$ is well-defined and $f_t \leq \beta_t$. 

    \noindent \textbf{Case 2.1:} $f_t = \beta_t$.

    In this case, we must have $t \in T$, because $T$ is $\beta$-complete, a contradiction.

    \noindent \textbf{Case 2.2:} $f_t < \beta_t$.
    
    If $f_t < \beta_t$, then $w = 1_t - 1_{\text{dom}(f_t)} \neq 0$. Hence, consider the element $wz \in PA_{\beta|_{S_B}}(S_B)$. Thus $wz = (w1_s\delta_{s,\sigma(t)})_{s \in S_B}$, and given an element $s \in S_B \cap \sigma(t)$,
    \begin{align*}
        w1_s = \begin{cases}
            w, \text{ if } t \preceq s,
            0, \text{ otherwise.}
        \end{cases}
    \end{align*}

    Indeed, if $t \preceq s$, then $w1_s = (1_t - 1_{\text{dom}(f_t)})1_s = 1_t1_S - 1_{\text{dom}(f_t)}1_s = 1_t - 1_{\text{dom}(f_t)} = w$. Now, if $t \preceq s$ does not hold, then $t \wedge s \preceq t$, so $s \wedge t = u_j$, for some $1 \leq j \leq k$, since $t$ is minimal with respect to the natural partial order. Thus, $1_{u_i}1_s = (1_{u_i}1_t)1_s = 1_{u_i}(1_t1_s) = 1_{u_i}1_{u_j}$, for all $1 \leq i \leq k$. Then, $1_{\text{dom}(f_t)}1_s = 1_{u_j}$, because
    \begin{align*}
        1_{\text{dom}(f_t)}1_s & = \left ( \sum_{\ell=1}^k \sum_{i_1 \leq \cdots \leq i_\ell} (-1)^{k+1}1_{u_{i_1}} \cdots 1_{u_{i_\ell}} \right)1_s 
         = \sum_{\ell=1}^k \sum_{i_1 \leq \cdots \leq i_\ell} (-1)^{k+1}1_{u_{i_1}} \cdots 1_{u_{i_\ell}}1_s \\
        & = \sum_{\ell=1}^k \sum_{i_1 \leq \cdots \leq i_\ell} (-1)^{k+1}1_{u_{i_1}} \cdots 1_{u_{i_\ell}}1_{u_j} 
         = \left ( \sum_{\ell=1}^k \sum_{i_1 \leq \cdots \leq i_\ell} (-1)^{k+1}1_{u_{i_1}} \cdots 1_{u_{i_\ell}} \right )1_{u_j} \\
         & = 1_{\text{dom}(f_t)}1_{u_j} = 1_{u_j}.
    \end{align*}

    Hence, $w1_s = 1_t1_s - 1_{\text{dom}(f_t)}1_s = 1_{u_j} - 1_{u_j} = 0$. 

    Therefore, there exists $0 \neq x' \in A \otimes_B A$ such that $\psi'(x') = wz$. Assume that $x' = \sum_{i=1}^{m'} a_i' \otimes b_i'$. Thus,
    \begin{align*}
        \sum_{i=1}^{m'} a_i'\beta_s(b_i') = \begin{cases}
            w, \text{ if } t \preceq s, \\
            0, \text{ otherwise.}
        \end{cases}
    \end{align*}

    Observe now that $\psi(x') = 0$, because there is not $s \in T$ with $t \preceq s$. But $\psi$ is an isomorphism and $x' \neq 0$, a contradiction.
\end{proof}

\begin{lemma} \label{lemagammatensor}
    Suppose that $A$ is a $\beta$-Galois extension of $A^\beta$ and let $R$ be a commutative $A^\beta$-algebra. Define $\gamma = (R \otimes A_s, \gamma_s)_{s \in S}$ as an action of $S$ on $R \otimes_{A^{\beta}} A$ induced by $\beta$ via $\gamma_s(r \otimes a1_{s^{-1}}) = r \otimes \beta_s(a1_{s^{-1}})$, for $r \in R$, $a \in A$ and $s \in S$. Then $R \otimes_{A^\beta} A$ is a $\gamma$-Galois extension of $R$.
\end{lemma}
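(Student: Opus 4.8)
The plan is to transport a Galois coordinate system from $A$ up to $R\otimes_{A^\beta}A$, and then to identify the ring of invariants of the induced action with $R$. First I would confirm that $\gamma$ really is a unital action of $S$ on $R\otimes_{A^\beta}A$: each ideal $R\otimes A_s$ is unital with central idempotent $1_R\otimes 1_s$, each $\gamma_s$ is a ring isomorphism $R\otimes A_{s^{-1}}\to R\otimes A_s$ induced by $\beta_s$, the relation $R\otimes_{A^\beta}A=\sum_{e\in E(S)}(R\otimes A_e)$ follows from $A=\sum_{e\in E(S)}A_e$, and axiom (ii) for $\gamma$ is inherited termwise from the one for $\beta$. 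All of this is routine.

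Next, fix a Galois coordinate system $\{x_i,y_i\}_{i=1}^n$ of $A$ over $A^\beta$ and set $X_i=1_R\otimes x_i$, $Y_i=1_R\otimes y_i$. Using $\gamma_s(1_R\otimes a1_{s^{-1}})=1_R\otimes\beta_s(a1_{s^{-1}})$ and pushing the scalar $1_R$ through the tensor,
\begin{align*}
\sum_{i=1}^n X_i\,\gamma_s\bigl(Y_i(1_R\otimes 1_{s^{-1}})\bigr)
=1_R\otimes\sum_{i=1}^n x_i\beta_s(y_i1_{s^{-1}})
=1_R\otimes\sum_{e\in E(S)}1_e\delta_{e,s}
=\sum_{e\in E(S)}(1_R\otimes 1_e)\,\delta_{e,s},
\end{align*}
so $\{X_i,Y_i\}_{i=1}^n$ is a Galois coordinate system for $\gamma$. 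This already shows that $R\otimes_{A^\beta}A$ is a $\gamma$-Galois extension of its own ring of invariants $(R\otimes_{A^\beta}A)^\gamma$.

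The real content, and the main obstacle, is to identify this ring of invariants with $R$, i.e. to prove $(R\otimes_{A^\beta}A)^\gamma=R\otimes 1_A$. The inclusion $R\otimes 1_A\subseteq(R\otimes_{A^\beta}A)^\gamma$ is immediate from $\beta_s(1_{s^{-1}})=1_s$. For the reverse inclusion I would avoid reconstructing the $E$-unitary machinery of Theorem \ref{teoconstracparcsigma} for $\gamma$ (note that $\gamma$ need not be injective, e.g.\ when $R=0$), and work instead with the base-changed partial group action $\alpha^\gamma:=\mathrm{id}_R\otimes\alpha$ of $G=S/\sigma$ on $R\otimes_{A^\beta}A$, which is a genuine unital partial action precisely because $\alpha$ is. By Proposition \ref{propgaloissse}, $A$ is $\alpha$-Galois over $A^\alpha=A^\beta$, and base change shows $\{X_i,Y_i\}$ is an $\alpha^\gamma$-Galois coordinate system; hence $R\otimes_{A^\beta}A$ is $\alpha^\gamma$-Galois, and the partial-group equivalence of \cite{bagio2012partial} (the analogue of Theorem \ref{teoequivseminvgal}(ix)) yields $\tr_{\alpha^\gamma}(R\otimes_{A^\beta}A)=(R\otimes_{A^\beta}A)^{\alpha^\gamma}$, the latter coinciding with $(R\otimes_{A^\beta}A)^{\gamma}$ by the same boolean-sum computation that proves $A^\beta=A^\alpha$ after Theorem \ref{teoconstracparcsigma}.

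Finally I would compute the trace explicitly: $\tr^\sigma_\gamma(r\otimes a)=\tr_{\alpha^\gamma}(r\otimes a)=\sum_{g\in G}r\otimes\alpha_g(a1_{g^{-1}})=r\otimes\tr^\sigma_\beta(a)$, and since $\tr^\sigma_\beta(a)\in A^\beta$ is a base scalar, $r\otimes\tr^\sigma_\beta(a)=\bigl(\tr^\sigma_\beta(a)\,r\bigr)\otimes 1_A\in R\otimes 1_A$. Thus the image of $\tr^\sigma_\gamma$ lies in $R\otimes 1_A$, and combining with the surjectivity onto the invariants just obtained forces $(R\otimes_{A^\beta}A)^\gamma=R\otimes 1_A$. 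Identifying $R\otimes 1_A\cong R$ is harmless here, since surjectivity of $\tr^\sigma_\beta$ supplies $a_0$ with $\tr^\sigma_\beta(a_0)=1_A$, whence $x\mapsto\tr^\sigma_\beta(a_0x)$ retracts $A^\beta\hookrightarrow A$ and $R\hookrightarrow R\otimes_{A^\beta}A$ is (split) injective. Together with the coordinate system from the second step, this gives that $R\otimes_{A^\beta}A$ is $\gamma$-Galois over $R$. The delicate point is exactly the invariants computation: it is here that the Galois hypothesis (through surjectivity of the trace onto the invariants) is indispensable, since for a general action the formation of invariants need not commute with base change.
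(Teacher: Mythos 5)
Your proposal is correct, and its skeleton matches the paper's: transport the Galois coordinate system along $a \mapsto 1_R \otimes a$ (identical computation), and then reduce everything to the genuinely hard step, proving $(R\otimes_{A^\beta}A)^\gamma = R$. Where you differ is in how that invariants step is executed. The paper does it by a direct element computation: it picks $c$ with $\tr^\sigma_\beta(c)=1_A$ (the paper writes ``$c\in R$'', a typo for $c\in A$; existence comes from Theorem \ref{teoequivseminvgal}(ix)) and, for an invariant $u=\sum_i r_i\otimes a_i$, expands $u = u\,(\text{Id}_R\otimes\tr^\sigma_\beta)(1_R\otimes c)$ to land on $u=\sum_i r_i\otimes\tr^\sigma_\beta(a_ic)\in R\otimes_{A^\beta}A^\beta$ --- in effect inlining, for the base-changed ring, the standard proof that invariants lie in the image of the trace. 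You instead make the base-changed partial action $\alpha^\gamma=\mathrm{id}_R\otimes\alpha$ of $G=S/\sigma$ an explicit object, check it is a unital partial action, note the transported coordinates make $R\otimes_{A^\beta}A$ an $\alpha^\gamma$-partial Galois extension, cite the partial-group equivalence to get $\text{im}(\tr_{\alpha^\gamma})=(R\otimes_{A^\beta}A)^{\alpha^\gamma}$, and finish with the one-line observation $\text{im}(\tr_{\alpha^\gamma})\subseteq R\otimes 1_A$. Both arguments pivot on exactly the same two facts: surjectivity of the trace (from the Galois hypothesis) and the identification of $\gamma$-invariants with invariants of the induced partial $G$-action via boolean sums of the idempotents $1_t$, $t\in\sigma(s)$. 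Notably, the paper uses that identification silently (it is what justifies the step $\sum_i r_i\otimes a_i1_g=\sum_i r_i\otimes\alpha_g(a_i1_{g^{-1}})$ in its chain of equalities), whereas you flag it as an explicit ingredient; likewise your warning that $\gamma$ itself need not be injective, so one should base-change $\alpha$ rather than re-run Theorem \ref{teoconstracparcsigma} on $\gamma$, is a genuine point of care the paper does not address. The cost of your route is the extra (routine, since the ideals are generated by central idempotents) verification that base change preserves unital partial actions; the gain is that the key step becomes a citation rather than a computation, and the identification of $R$ with $R\otimes 1_A$ is justified by an explicit trace retraction, where the paper instead appeals to the splitting $A\simeq A^\beta\oplus\ker(\tr^\sigma_\beta)$.
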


\begin{proof}
    We have $A \simeq A^\beta \oplus \ker(\tr_\beta^\sigma)$. Then $R \otimes_{A^\beta} A \simeq (R \otimes_{A^\beta} A^\beta) \oplus (R \otimes_{A^\beta} \ker(\tr_\beta^\sigma))$. We will identify $R$ with its homomorphic image $R \otimes_{A^\beta} A^\beta$.

    If $\{x_i,y_i\}_{i=1}^n \subseteq A$ are a $\beta$-Galois coordinate system of $A$ over $A^\beta$, then clearly teh set $\{1_R \otimes x_i, 1_R \otimes y_i\}_{i=1}^n$ satisfies
    \begin{align*}
        \sum_{i=1}^n (1_R \otimes x_i)\gamma_s(1_R \otimes y_i1_{s^{-1}}) = \sum_{e \in E(S)} \delta_{e,s}(1_R \otimes 1_e),
    \end{align*}
    for all $s \in S$. Therefore, it remains to prove that $(R \otimes_{A^\beta} A)^\gamma = R$.

    Consider $u = \sum_{i=1}^m r_i \otimes a_i \in (R \otimes A)^\gamma$ and $c \in R$ such that $\tr_\beta^\sigma(c) = 1_A$. Then
    \begin{align*}
        u & = u(\text{Id}_R \otimes \tr_\beta^\sigma)(1_R \otimes c) 
         = \left ( \sum_{i=1}^m r_i \otimes a_i \right ) \left ( 1_R \otimes \tr_\beta^\sigma(c) \right ) 
         = \sum_{i=1}^m r_i \otimes a_i\tr_\beta^\sigma(c) \\
        & = \sum_{i=1}^m r_i \otimes a_i\sum_{g \in G} \alpha_g(c1_{g^{-1}}) 
         = \sum_{g \in G} \sum_{i=1}^m (r_i \otimes a_i)\alpha_g(c1_{g^{-1}}) 
         = \sum_{g \in G} \sum_{i=1}^m (r_i \otimes a_i)1_g\alpha_g(c1_{g^{-1}}) \\
        & = \sum_{g \in G} \sum_{i=1}^m (r_i \otimes a_i1_g)\alpha_g(c1_{g^{-1}}) 
         = \sum_{g \in G} \sum_{i=1}^m (r_i \otimes \alpha_g(a_i1_{g^{-1}}))\alpha_g(c1_{g^{-1}}) \\
        & = \sum_{i=1}^m r_i \otimes \sum_{g \in G} \alpha_g(a_i1_{g^{-1}})\alpha_g(c1_{g^{-1}}) 
         = \sum_{i=1}^m r_i \otimes \sum_{g \in G} \alpha_g(a_ic1_{g^{-1}}) 
         = \sum_{i=1}^m r_i \otimes \tr_\beta^\sigma(a_ic),
    \end{align*}
    that is, $u \in R \otimes_{A^\beta} A^\beta = R$.

    The reverse inclusion is immediate.
\end{proof}

Define $E = \{ f : S \to A : f \text{ is a function, } f(s) \in A_s, \text{ for all } s \in S, \text{ and if } s \preceq t, \text{ then } $ $ f(s) = f(t)1_s \}$. Note that $E$ is a $A^\beta$-algebra with pointwise operations.

\begin{lemma} \label{lemaacaofuncoes}
    Suppose that $A$ is $\beta$-Galois over $A^\beta$. Then there exists an action $\beta'$ of $S$ on $E$ and $E$ is $\beta'$-Galois over $A$.
\end{lemma}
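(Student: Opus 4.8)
The plan is to recognize $E$ as the algebra $PA_\beta(S)$ introduced earlier and then transport the Galois structure of $A\otimes_{A^\beta}A$ over $A$ through the isomorphism $\psi$ supplied by Theorem \ref{teoequivseminvgal}. First I would observe that $E$ is literally $PA_\beta(S)$: a function $f\colon S\to A$ with $f(s)\in A_s$ is the same datum as a tuple $(f(s))_{s\in S}\in\prod_{s\in S}A_s$, and the condition ``$s\preceq t\Rightarrow f(s)=f(t)1_s$'' is exactly the defining condition of $PA_\beta(S)$. Since $A$ is $\beta$-Galois over $A^\beta$, Theorem \ref{teoequivseminvgal}(iv) gives that
$$\psi\colon A\otimes_{A^\beta}A\to E,\qquad \psi(a\otimes b)=\bigl(a\beta_s(b1_{s^{-1}})\bigr)_{s\in S},$$
is an isomorphism of left $A$-modules. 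I would then upgrade this to an isomorphism of $A^\beta$-algebras: multiplicativity is a short computation using that $\beta_s$ is a ring homomorphism with $1_{s^{-1}}^2=1_{s^{-1}}$ and that $A$ is commutative, namely $\psi(a\otimes b)\,\psi(a'\otimes b')=\bigl(aa'\beta_s(bb'1_{s^{-1}})\bigr)_{s\in S}=\psi(aa'\otimes bb')$.

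Next I would apply Lemma \ref{lemagammatensor} with $R=A$. This produces an action $\gamma=(A\otimes_{A^\beta}A_s,\gamma_s)_{s\in S}$ of $S$ on $A\otimes_{A^\beta}A$, given by $\gamma_s(a\otimes b1_{s^{-1}})=a\otimes\beta_s(b1_{s^{-1}})$, together with the fact that $A\otimes_{A^\beta}A$ is $\gamma$-Galois over $R=A$, where $A$ is identified with its image $A\otimes_{A^\beta}A^\beta$ under $a\mapsto a\otimes 1_A$. Transporting this action through $\psi$, I would set $E_s=\psi(A\otimes_{A^\beta}A_s)$ and $\beta'_s=\psi\circ\gamma_s\circ\psi^{-1}\colon E_{s^{-1}}\to E_s$. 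Because $\psi$ is a ring isomorphism, each $E_s$ is a unital ideal of $E$, the equality $\sum_{e\in E(S)}E_e=\psi\bigl(A\otimes_{A^\beta}\sum_{e\in E(S)}A_e\bigr)=\psi(A\otimes_{A^\beta}A)=E$ holds, and the composition/multiplicativity axioms of an inverse semigroup action are inherited verbatim from $\gamma$; hence $\beta'=(E_s,\beta'_s)_{s\in S}$ is a genuine action of $S$ on $E$.

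Finally, since $\psi$ is an $A^\beta$-algebra isomorphism intertwining $\gamma$ and $\beta'$ and sending the identity $1_A\otimes 1_s$ of $A\otimes A_s$ to the identity of $E_s$, the $\gamma$-Galois coordinate system $\{1_A\otimes x_i,\,1_A\otimes y_i\}_i$ of $A\otimes_{A^\beta}A$ over $A$ (coming from Lemma \ref{lemagammatensor}) is carried to $\{\psi(1_A\otimes x_i),\,\psi(1_A\otimes y_i)\}_i$: applying $\psi$ to the defining equality of the $\gamma$-system yields the defining equality of a $\beta'$-Galois coordinate system of $E$ over $\psi(A\otimes 1_A)$. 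Moreover $E^{\beta'}=\psi\bigl((A\otimes_{A^\beta}A)^\gamma\bigr)=\psi(A\otimes 1_A)$, the diagonal copy $\{(a1_s)_{s\in S}:a\in A\}$ of $A$ inside $E$, so $E$ is $\beta'$-Galois over $A$ after this identification.

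The only genuine verifications are the multiplicativity of $\psi$ and the routine check that transport of structure through a ring isomorphism preserves both ``action of $S$'' and ``Galois''. I expect the main (though mild) obstacle to be the bookkeeping of the base: confirming that the image $\psi(A\otimes 1_A)$ really is the invariant ring $E^{\beta'}$ and not something larger, so that the identification of $A$ with $\iota(A)=\{(a1_s)_{s\in S}:a\in A\}$ is legitimate; this is where the precise conclusion $(R\otimes_{A^\beta}A)^\gamma=R$ of Lemma \ref{lemagammatensor} is used.
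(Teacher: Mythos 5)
Your proposal is correct and follows essentially the same route as the paper: identify $E$ with $PA_\beta(S)$, use Theorem \ref{teoequivseminvgal}(iv) to obtain the algebra isomorphism $A\otimes_{A^\beta}A\simeq E$, invoke Lemma \ref{lemagammatensor} with $R=A$, and transport the action $\gamma$ and its Galois coordinate system through the isomorphism. Your additional care in verifying multiplicativity of $\psi$ and in checking that $E^{\beta'}$ equals the image of $A$ (rather than something larger) only makes explicit steps the paper leaves as ``easy to see.''
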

\begin{proof}
    By Lemma \ref{lemagammatensor}, $A \otimes_{A^\beta} A$ is $\gamma$-Galois on $A$, where $\gamma_s(a \otimes b1_{s^{-1}}) = a \otimes \beta_s(b1_{s^{-1}})$, for $a, b \in A$. But $E \simeq PA_\beta(S)$ via
    \begin{align*}
        \phi : E & \to PA_\beta(S) \\
        f & \mapsto (f(s))_{s \in S}
    \end{align*}
    with inverse given by
    \begin{align*}
        \phi^{-1} : PA_\beta(S) & \to E \\
        (a_s)_{s \in S} & \mapsto f : S \to R, \qquad f(s) = a_s, \; \forall s \in S.
    \end{align*}

    By item (iv) of Theorem \ref{teoequivseminvgal},
    \begin{align*}
        \psi : A \otimes_{A^\beta} A & \to PA_\beta(S) \\
        a \otimes b & \mapsto (a\beta_s(b1_{s^{-1}}))_{s \in S}
    \end{align*}
    is an isomorphism of $A^\beta$-algebras. Thus, the map $\eta = \phi^{-1} \circ \psi$ given by
    \begin{align*}
        \eta : A \otimes_{A^\beta} A & \to E \\
            a \otimes b & \mapsto \eta(a \otimes b) : S \to R, \qquad \eta(a \otimes b)(s) = a\beta_s(b1_{s^{-1}}),
    \end{align*}
    is an isomorphism of $A^\beta$-algebras.

    This isomorphism induces an action $\beta' = (E_s,\beta_s')_{s \in S}$ of $S$ in $E = \eta(A \otimes_{A^\beta} A)$ given by
    \begin{align*}
        \beta_s'(\eta(a \otimes b)\eta(1_A \otimes 1_{s^{-1}}))(t) & = \beta_s'(\eta(a \otimes b1_{s^{-1}}))(t) 
         = \eta(\gamma_s(a \otimes b1_{s^{-1}}))(t) \\
        & = \eta(a \otimes \beta_s(b1_{s^{-1}}))(t) 
         = a\beta_t(\beta_s(b1_{s^{-1}})1_{t^{-1}}),
    \end{align*}
    where the ideals are given by $E_s = \eta(A \otimes_{A^\beta} A_s)$. Now, as $A \otimes_{A^\beta} A$ is $\beta$-Galois on $A$, it is easy to see that $E$ is $\beta'$-Galois on $A$.
\end{proof}

\begin{lemma}
    Assume $A$ a $\beta$-Galois extension of $A^\beta$ and $T$ an inverse subsemigroup of $S$. Then $f \in E^{\beta'|_{T}}$ if and only if $f(st) = f(s)1_{st}$, for all $t \in T$ and $s \in S$.
\end{lemma}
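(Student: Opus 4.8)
The plan is to test the invariance condition
\[
\beta'_t\bigl(f\,1_{E_{t^{-1}}}\bigr) = f\,1_{E_t}, \qquad t \in T,
\]
pointwise: since the elements of $E$ are functions $S \to A$, I would evaluate each side at an arbitrary $s \in S$ and match the outcome against the claimed relation $f(st) = f(s)1_{st}$. Here $1_{E_t}$ denotes the central idempotent generating the unital ideal $E_t = \eta(A \otimes_{A^\beta} A_t)$ supplied by Lemma \ref{lemaacaofuncoes}, and $\eta$ is the $A^\beta$-algebra isomorphism $A \otimes_{A^\beta} A \to E$ constructed there.

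First I would identify $1_{E_t}$. Since $1_A \otimes 1_t$ is the identity of the unital ideal $A \otimes_{A^\beta} A_t$ and $\eta$ is an isomorphism, $1_{E_t} = \eta(1_A \otimes 1_t)$, i.e.\ $1_{E_t}(s) = \beta_s(1_t 1_{s^{-1}})$. As $\beta$ is a homomorphism into $\operatorname{Iso}_{pu}(A)$, the image of the composite $\beta_s\beta_t = \beta_{st}$ equals $\beta_s(A_{s^{-1}} \cap A_t)$, whence $A_{st} = \beta_s(A_{s^{-1}} \cap A_t)$ and therefore $\beta_s(1_t 1_{s^{-1}}) = 1_{st}$. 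This yields
\[
(f\,1_{E_t})(s) = f(s)\,1_{st},
\]
which is exactly the right-hand side of the asserted identity.

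Next, writing $f = \eta\bigl(\sum_i a_i \otimes b_i\bigr)$ and invoking the formula for the induced action $\beta'$ from Lemma \ref{lemaacaofuncoes}, I would compute
\[
\beta'_t\bigl(f\,1_{E_{t^{-1}}}\bigr)(s) = \sum_i a_i\,\beta_s\bigl(\beta_t(b_i 1_{t^{-1}})\,1_{s^{-1}}\bigr).
\]
The crux is the identity $\beta_s\bigl(\beta_t(b\,1_{t^{-1}})1_{s^{-1}}\bigr) = \beta_{st}(b\,1_{(st)^{-1}})$. To establish it I would use that $A_{(st)^{-1}} = \beta_t^{-1}(A_t \cap A_{s^{-1}})$ from axiom (ii) of the action, so that $\beta_t(1_{(st)^{-1}}) = 1_t 1_{s^{-1}}$, together with the containment $A_{(st)^{-1}} \subseteq A_{t^{-1}}$ coming from $\operatorname{im}\beta_{t^{-1}s^{-1}} \subseteq \operatorname{im}\beta_{t^{-1}}$. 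Multiplicativity of $\beta_t$ on $A_{t^{-1}}$ then gives $\beta_t(b\,1_{t^{-1}})1_{s^{-1}} = \beta_t(b\,1_{(st)^{-1}})$, and a final application of $\beta_s\beta_t = \beta_{st}$ on $A_{(st)^{-1}}$ produces the claim. Summing over $i$ gives $\beta'_t(f\,1_{E_{t^{-1}}})(s) = f(st)$.

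Combining the two computations, the invariance equation $\beta'_t(f\,1_{E_{t^{-1}}}) = f\,1_{E_t}$ holds precisely when $f(st) = f(s)1_{st}$ for every $s \in S$; letting $t$ range over $T$ yields the stated equivalence. I expect the main obstacle to be the bookkeeping of domains and idempotents in the third step — in particular justifying $\beta_s(1_t 1_{s^{-1}}) = 1_{st}$ and $\beta_t(1_{(st)^{-1}}) = 1_t 1_{s^{-1}}$ — where one must repeatedly exploit that images of composites of the $\beta$'s coincide with images of the relevant single isomorphism and that the $1_s$ are central idempotents, so that restriction and multiplication commute with the $\beta$'s as needed.
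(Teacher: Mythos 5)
Your proposal is correct and takes essentially the same route as the paper: both write $f = \eta\left(\sum_i a_i \otimes b_i\right)$, evaluate the invariance condition $\beta'_t(f\,1_{E_{t^{-1}}}) = f\,1_{E_t}$ pointwise via the induced-action formula, and reduce everything to the identities $\beta_s(\beta_t(b\,1_{t^{-1}})1_{s^{-1}}) = \beta_{st}(b\,1_{(st)^{-1}})$ and $\beta_s(1_t 1_{s^{-1}}) = 1_{st}$ coming from axiom (ii) of the action. The only cosmetic difference is that you isolate the idempotents $1_{E_t} = \eta(1_A \otimes 1_t)$ and obtain $f(st)$ exactly, whereas the paper carries a redundant factor $1_s$ on the left-hand side and strips it off at the end of its chain of equivalences.
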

\begin{proof}
Consider $f \in E^{\beta'|_{T}}$. Since $\eta$, as defined in Lemma \ref{lemaacaofuncoes}, is an isomorphism, there exists $\sum_{i=1}^n a_i \otimes b_i \in A \otimes_{A^\beta} A$ such that $f = \eta \left ( \sum_{i=1}^n a_i \otimes b_i \right )$. Thus,
\begin{align*}
    \eta \left ( \sum_{i=1}^n a_i \otimes b_i \right ) \in E^{\beta'|_{T}} & \Leftrightarrow \beta_t'\left ( \eta \left ( \sum_{i=1}^n a_i \otimes b_i1_{t^{-1}} \right ) \right ) = \eta \left ( \sum_{i=1}^n a_i \otimes b_i1_t \right ), \; \forall t \in T \\
    & \Leftrightarrow \eta \left ( \sum_{i=1}^n a_i \otimes \beta_t(b_i1_{t^{-1}}) \right ) = \eta \left ( \sum_{i=1}^n a_i \otimes b_i1_t \right ), \; \forall t \in T \\
    & \Leftrightarrow \eta \left ( \sum_{i=1}^n a_i \otimes \beta_t(b_i1_{t^{-1}}) \right )(u) = \eta \left ( \sum_{i=1}^n a_i \otimes b_i1_t \right )(u), \; \forall t \in T, u \in S \\
    & \Leftrightarrow \sum_{i=1}^n a_i\beta_u(\beta_t(b_i1_{t^{-1}})1_{u^{-1}}) = \sum_{i=1}^n a_i\beta_u(b_i1_t1_{u^{-1}}), \; \forall t \in T, u \in S \\
    & \Leftrightarrow \sum_{i=1}^n a_i\beta_{ut}(b_i1_{t^{-1}u^{-1}})1_u = \sum_{i=1}^n a_i\beta_u(b_i1_{u^{-1}})1_{ut}, \; \forall t \in T, u \in S \\
    & \Leftrightarrow \eta\left ( \sum_{i=1}^n a_i \otimes b_i \right )(ut)1_u = \eta\left( \sum_{i=1}^n a_i \otimes b_i \right )(u)1_{ut}, \; \forall t \in T, u \in S \\
    & \Leftrightarrow f(ut)1_u = f(u)1_{ut}, \; \forall t \in T, u \in S \\
    & \Leftrightarrow f(ut)1_{ut}1_u = f(u)1_{ut}, \; \forall t \in T, u \in S \\
    & \Leftrightarrow f(ut)1_{ut} = f(u)1_{ut}, \; \forall t \in T, u \in S \\
    & \Leftrightarrow f(ut) = f(u)1_{ut}, \; \forall t \in T, u \in S.
\end{align*}
\end{proof}

For the next theorem, we recall the reader the definition of restricted product, denoted $\cdot$, in an inverse semigroup. The \emph{restricted product} $s \cdot t$ is defined in $S$ if and only if $s^{-1}s = tt^{-1}$ and, in this case, $s \cdot t = st$.

 Let $T$ be a full inverse subsemigroup of $S$. Define the relation $\equiv_T$ in $S$ by \begin{align}\label{quotient} s \equiv_T u \Leftrightarrow \exists u^{-1} \cdot s \text{ and } u^{-1} \cdot s \in T.\end{align} Clearly this relation is reflexive (since $T$ is full) and symmetric. To show the transitivity, take $s \equiv_T u$ and $u \equiv_T v$. Then $u^{-1} \cdot s, v^{-1} \cdot u \in T$. Since $T$ is an inverse subsemigroup, it follows that $(v^{-1}\cdot u)(u^{-1} \cdot s) \in T$. Now, notice that $(v^{-1} \cdot u)(u^{-1} \cdot s) = (v^{-1}u)(u^{-1}s) = v^{-1}(uu^{-1})s = v^{-1}(vv^{-1})s = (v^{-1}vv^{-1})s = v^{-1}s$, and $vv^{-1} = uu^{-1} = ss^{-1}$, hence $\exists v^{-1} \cdot s$ and $v^{-1} \cdot s \in T$.

\begin{theorem} \label{teoremagalois2}
    Suppose that $A$ is a $\beta$-Galois extension of $A^\beta$. Let $B$ be an $A^\beta$-separable and $\beta$-strong subalgebra of $A$. Then $A^{\beta|_{S_B}} = B$.
\end{theorem}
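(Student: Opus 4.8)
Write $C:=A^{\beta|_{S_B}}$. The inclusion $B\subseteq C$ is immediate, since by definition of $S_B$ every $s\in S_B$ already fixes $B$ in the twisted sense $\beta_s(b1_{s^{-1}})=b1_s$; so the content lies entirely in the reverse inclusion $C\subseteq B$. The plan is to transport everything to the function algebra $E$ of Lemma \ref{lemaacaofuncoes} and then descend along the faithfully flat extension $A^\beta\subseteq A$. First I would record the structural input: by the Proposition preceding Theorem \ref{teoremagalois1}, $S_B$ is $\beta$-complete, so Theorem \ref{teoremagalois1} applies to $T=S_B$ and gives that $C=A^{\beta_{S_B}}$ is $A^\beta$-separable and $\beta$-strong, that $A$ is $\beta_{S_B}$-Galois over $C$, and that $S_C=S_B$. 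Moreover, since $A$ is $\beta$-Galois over $A^\beta$, Theorem \ref{teoequivseminvgal}(ii) and (x) make $A$ a finitely generated projective generator, hence a faithfully flat, $A^\beta$-module. It therefore suffices to prove the equality of $A^\beta$-submodules $A\otimes_{A^\beta}B=A\otimes_{A^\beta}C$ inside $A\otimes_{A^\beta}A$: faithful flatness then forces the inclusion $B\hookrightarrow C$ to be an isomorphism, i.e. $B=C$.

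Next I would handle the easy inclusions through the isomorphism $\eta\colon A\otimes_{A^\beta}A\to E$ and the lemma characterizing the invariants $E^{\beta'|_T}$ (namely $f\in E^{\beta'|_T}$ iff $f(su)=f(s)1_{su}$ for all $u\in T$, $s\in S$). A direct computation with the action axioms shows that for $d\in C$ and $u\in S_B$ one has $\beta_{su}(d1_{(su)^{-1}})=\beta_s(d1_{s^{-1}})1_{su}$, where the only input used is $\beta_u(d1_{u^{-1}})=d1_u$, which holds precisely because $d\in C=A^{\beta|_{S_B}}$ (and a fortiori for $d\in B$). Consequently $\eta(a\otimes d)(su)=\eta(a\otimes d)(s)1_{su}$, so by the characterization lemma $\eta(A\otimes_{A^\beta}B)\subseteq\eta(A\otimes_{A^\beta}C)\subseteq E^{\beta'|_{S_B}}$, the first inclusion coming from $B\subseteq C$.

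It then remains to establish the single reverse inclusion $E^{\beta'|_{S_B}}\subseteq\eta(A\otimes_{A^\beta}B)$. Granting it, the displayed chain collapses into equalities, so $\eta(A\otimes_{A^\beta}B)=\eta(A\otimes_{A^\beta}C)$ and, since $\eta$ is injective, $A\otimes_{A^\beta}B=A\otimes_{A^\beta}C$; the faithfully flat descent of the previous paragraph then yields $B=C$, completing the proof.

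This last inclusion is the main obstacle, and it is exactly where both hypotheses on $B$ are consumed. The $A^\beta$-separability of $B$, combined with the trace $\tr^{\sigma}_\beta\colon A\to A^\beta$, should produce a $B$-bimodule retraction $\pi\colon A\to B$ and hence an idempotent $\mathrm{id}_A\otimes\pi$ of $A\otimes_{A^\beta}A$ with image $A\otimes_{A^\beta}B$; the $\beta$-strong condition is then what guarantees that $S_B$ is large enough that every $f\in E^{\beta'|_{S_B}}$ is already fixed by $\eta\circ(\mathrm{id}_A\otimes\pi)\circ\eta^{-1}$, forcing $f\in\eta(A\otimes_{A^\beta}B)$. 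I expect the technical core to be precisely this verification that strongness prevents any $S_B$-invariant function of $E$ from leaving the image of $A\otimes_{A^\beta}B$, by an argument parallel to the one through which Theorem \ref{teoremagalois1}(iv) excluded the existence of a minimal element of $S_B\setminus T$.
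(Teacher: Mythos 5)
Your overall skeleton is sound and matches the paper's architecture: reduce everything to the single inclusion $E^{\beta'|_{S_B}} \subseteq \eta(A \otimes_{A^\beta} B)$, combine it with the easy chain $\eta(A\otimes_{A^\beta}B)\subseteq \eta(A\otimes_{A^\beta}C)\subseteq E^{\beta'|_{S_B}}$ (your computation for this, using only $\beta_u(d1_{u^{-1}})=d1_u$ for $d\in C$, $u\in S_B$, is exactly the paper's verification), and then descend from $A\otimes_{A^\beta}B=A\otimes_{A^\beta}C$ to $B=C$. Your descent via faithful flatness of $A$ over $A^\beta$ (finitely generated projective generator, from Theorem \ref{teoequivseminvgal}(ii) and (x)) is a legitimate, slightly more abstract substitute for the paper's device of applying $\tr^\sigma_\beta \otimes_{A^\beta} \mathrm{Id}_A$ to the inclusion $A \otimes_{A^\beta} A^{\beta|_{S_B}} \subseteq A \otimes_{A^\beta} B$ and invoking surjectivity of the trace.

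However, there is a genuine gap: the inclusion $E^{\beta'|_{S_B}} \subseteq \eta(A \otimes_{A^\beta} B)$ is precisely the content of the theorem, and you do not prove it --- you only \emph{conjecture} a mechanism (``separability should produce a $B$-bimodule retraction $\pi\colon A\to B$ \ldots I expect the technical core to be precisely this verification''). It is not clear how $A^\beta$-separability of $B$ plus the trace yields such a retraction, nor, granting $\pi$, why $\beta$-strongness would force every element of $E^{\beta'|_{S_B}}$ to be fixed by $\eta\circ(\mathrm{id}_A\otimes\pi)\circ\eta^{-1}$; no argument is offered for either point. The paper's actual mechanism is different and is where both hypotheses on $B$ enter concretely: one chooses representatives $s_1,\ldots,s_n$ of the classes of the relation $\equiv_{S_B}$ defined via the restricted product, forms the $A$-algebra maps $f_i\colon E\to A$, $f_i(v)=v(s_i)$, and shows they are \emph{strongly distinct} --- this is exactly where $\beta$-strongness of $B$ is used, after first showing $\beta_{s_i}|_B\neq\beta_{s_j}|_B$ because otherwise $s_j^{-1}s_i\in S_B$ would contradict $s_i\not\equiv_{S_B}s_j$. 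Then $A^\beta$-separability of $B$ makes $\eta(A\otimes_{A^\beta}B)$ an $A$-separable algebra, and \cite[Lemma 2.4]{paques2018galois} produces pairwise orthogonal idempotents $w_1,\ldots,w_n\in\eta(A\otimes_{A^\beta}B)$ with $w_j(s_i)=\delta_{i,j}$; one then checks $w_i\in E^{\beta'|_{S_B}}$ and that every $f\in E^{\beta'|_{S_B}}$ is an $A$-linear combination of the $w_i$, which gives the inclusion. Without this (or a worked-out alternative), your proposal establishes only the framing of the proof, not the proof itself; note also that your appeal to Theorem \ref{teoremagalois1} to get that $C$ is separable, $\beta$-strong and $S_C=S_B$ is never used afterwards, so it does not fill the gap.
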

\begin{proof}
    Let $T = S_B$. We have $B \subseteq A^{\beta|_{T}}$. Hence we shall prove that  $A^{\beta|_{T}} \subseteq B$.

    \noindent \textbf{Claim:} $E^{\beta'|_{T}} \subseteq \eta(A \otimes_{A^\beta} B)$.

    Let $\{s_i\}_{i=1}^n$ a system of representative of the congruence classes defined by $\equiv_T$. Consider the homomorphism of $A$-algebras $f_i : E \to A$ defined by $f_i(v) = v(s_i)$.

    We will proceed to prove that $f_1, \ldots, f_n$ are strongly distinct. Indeed, if $i \neq j$, then the restrictions $\beta_{s_i}|_B$ and $\beta_{s_j}|_B$ can not coincide. In fact, if $\beta_{s_i}(b1_{s_i^{-1}}) = \beta_{s_j}(b1_{s_j}^{-1})$, for all $b \in B$, then $\beta_{s_j}^{-1}(\beta_{s_i}(b1_{s_i^{-1}})1_{s_j}) = b1_{s_j^{-1}}$. Hence,
    \begin{align*}
        & \beta_{s_j^{-1}s_i}(b1_{s_i^{-1}s_j})1_{s_j^{-1}} = t1_{s_j^{-1}} 
        \Rightarrow  \beta_{s_j^{-1}s_i}(b1_{s_i^{-1}s_j})1_{s_j^{-1}s_i}1_{s_j^{-1}} = t1_{s_j^{-1}}1_{s_j^{-1}s_i} \\
        \Rightarrow & \beta_{s_j^{-1}s_i}(b1_{s_i^{-1}s_j})1_{s_j^{-1}s_i} = t1_{s_j^{-1}s_i} 
        \Rightarrow  \beta_{s_j^{-1}s_i}(b1_{s_i^{-1}s_j}) = t1_{s_j^{-1}s_i},
    \end{align*}
    for all $b \in B$, that is, $s_j^{-1}s_i \in S_B = T$, whence $s_i \equiv_T s_j$, a contradiction.

    Now, since $B$ is $\beta$-strong, for each nonzero idempotent $e \in A_{s_i} \cup A_{s_j}$ there exists $b \in B$ such that $\beta_{s_i}(b1_{s_i^{-1}})e \neq \beta_{s_j}(b1_{s_j^{-1}})e$. Thus,
    \begin{align*}
        f_i(\eta(1_A \otimes b))e = \eta(1_A \otimes b)(s_i)e = \beta_{s_i}(b1_{s_i^{-1}})e \neq \beta_{s_j}(b1_{s_j^{-1}})e = f_j(\eta(1_A \otimes b))e.
    \end{align*}

    Now we can prove the claim. Since $B$ is $A^\beta$-separable, then $A \otimes_{A^\beta} B$ is $A$-separable by \cite[Proposition III 2.1]{knus2006theorie} and so $\eta(A \otimes_{A^\beta} B)$ is $A$-separable. From \cite[Lemma 2.4]{paques2018galois}, we obtain pairwise orthogonal idempotents  $w_1, \ldots, w_n \in \eta(A \otimes_{A^\beta} B)$ with $f_i(x)w_i = xw_i$, for all $x \in \eta(A \otimes_{A^\beta} B)$ and $w_j(s_i) = f_i(w_j) = \delta_{i,j}$, for all $1 \leq i,j \leq n$.

    We shall verify that $w_1, \ldots, w_n$ generates $E^{\beta'|_T}$ on $A$. To this, let us check firstly that $w_i \in E^{\beta'|_T}$, for all $1 \leq i \leq n$. Since $w_i \in \eta(A \otimes_{A^\beta} B)$, there exist $a_{ij} \in A$, $b_{ij} \in B$ such that $w_i = \eta \left ( \sum_{j} a_{ij} \otimes b_{ij} \right )$, for $1 \leq i \leq n$. We will prove that $w_i(st) = w_i(s)1_{st}$, for all $s \in S$ and $t \in T$. Indeed,
    \begin{align*}
        w_i(st) & = \eta \left ( \sum_{j} a_{ij} \otimes b_{ij} \right )(st) = \sum_j a_{ij}\beta_{st}(b_{ij}1_{t^{-1}s^{-1}}) \\
        & = \sum_j a_{ij}\beta_{s}(\beta_t(b_{ij}1_{t^{-1}})1_{s^{-1}}) = \sum_j a_{ij}\beta_s(b_{ij}1_t1_{s^{-1}}) \\
        & = \sum_j a_{ij}\beta_{s}(b_{ij}1_{s^{-1}})1_{st} = \eta \left ( \sum_{j} a_{ij} \otimes b_{ij} \right )(s)1_{st} = w_i(s)1_{st}.
    \end{align*}

    Therefore $w_i \in E^{\beta'|_T}$, for all $1 \leq i \leq n$. Consider now $f \in E^{\beta'|_T}$. then there are $a_j,b_j \in A$ such that $f = \eta \left ( \sum_j a_j \otimes b_j \right )$. Hence,
    \begin{align*}
        f(s_i) & = \eta \left ( \sum_j a_j \otimes b_j \right )(s_i) = \sum_j a_j\beta_{s_i}(b_j1_{s_i^{-1}}) \\
        & = \sum_{j,k} s_j\beta_{s_k}(b_j1_{s_k^{-1}})\delta_{k,i} = \sum_{j,k} s_j\beta_{s_k}(b_j1_{s_k^{-1}})w_k(s_i).
    \end{align*}

    As $f \in E^{\beta'|_T}$, it follows that $f(s_i)1_{s_it} = f(s_it)$, for all $t \in T$. Thus,
    \begin{align*}
        f(s_it) = f(s_i)1_{s_it} = \sum_{j,k} s_j\beta_{s_k}(b_j1_{s_k^{-1}})w_k(s_i)1_{s_it} = \sum_{j,k} s_j\beta_{s_k}(b_j1_{s_k^{-1}})w_k(s_it),
    \end{align*}
    and so $f = \sum_{j,k} s_j\beta_{s_k}(b_j1_{s_k^{-1}})w_k$, that is, $w_1, \ldots, w_n$ generates $E^{\beta'|_T}$ on $A$. Therefore $E^{\beta'|_T} \subseteq \eta(A \otimes_{A^\beta} B)$, which concludes the claim.

    It remains to prove that  $A^{\beta|_T} \subseteq B$. In fact, applying $\eta^{-1}$ in the Claim, we have $\eta^{-1}(E^{\beta'|_T}) \subseteq A \otimes_{A^\beta} B$. Note that $E^{\beta'|_T} = \eta((A \otimes_{A^\beta} A)^{\gamma|_T})$, because $\beta'_s \otimes \eta = \eta \otimes \gamma_s$, for all $s \in S$. Thus, $(A \otimes_{A^\beta} A)^{\gamma|_{T}} = \eta^{-1}(E^{\beta'|_{T}}) \subseteq A \otimes_{A^\beta} B$. Hence,
    \begin{align*}
        A \otimes_{A^\beta} A^{\beta|_{T}} \subseteq (A \otimes_{A^\beta} A)^{\gamma|_T} \subseteq A \otimes_{A^\beta} B.
    \end{align*}

    Applying $\tr_\beta^\sigma \otimes_{A^\beta} \text{Id}_A$ in this inclusion, we have that
    \begin{align*}
        A^{\beta|_{T}} = A^\beta \otimes_{A^\beta} A^{\beta|_{T}} = \tr_\beta^\sigma(A) \otimes_{A^\beta} A^{\beta|_{T}} \subseteq \tr_\beta^\sigma(A) \otimes_{A^\beta} B = A^\beta \otimes_{A^\beta} B = B,
    \end{align*}
    since $\tr_\beta^\sigma$ is surjective in $A^\beta$. This concludes the proof.
\end{proof}

Now we are able to enunciate our first main result, which establishes a one-to-one correspondence between the $A^\beta$-separable and $\beta$-strong subalgebras of $A$ and the $\beta$-complete inverse subsemigroups of $S$.

\begin{theorem}\textbf{Galois Correspondence for $E$-unitary inverse semigroups.} \label{teocorrgalparceunit}
   Let $S$ be a finite inverse semigroup $E$-unitary that acts injectively on a commutative ring $A$ via a unital action $\beta = (A_s,\beta_s)_{s \in S}$. Suppose that $A$ is a $\beta$-Galois extension of $A^\beta$ such that $A_s \neq 0$, for all $s \in S$. Then there exists a one-to-one correspondence between the $A^\beta$-separable and $\beta$-strong subalgebras $B$ of $A$ and the $\beta$-complete inverse subsemigroups $T$ of $S$ given by $B \mapsto S_B$ and $T \mapsto A^{\beta|_{T}}$.
\end{theorem}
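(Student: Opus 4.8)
The plan is to verify that the two assignments $B \mapsto S_B$ and $T \mapsto A^{\beta|_T}$ are well-defined maps between the two stated classes and that they are mutually inverse; since all the substantial work has been carried out in the preceding results, the argument here is essentially one of assembly, and I would present it as such.

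First I would confirm that the map $B \mapsto S_B$ sends an $A^\beta$-separable, $\beta$-strong subalgebra (indeed, any $A^\beta$-subalgebra) to a $\beta$-complete inverse subsemigroup of $S$: this is precisely the content of the Proposition immediately preceding Theorem \ref{teoremagalois1}. Dually, for the map $T \mapsto A^{\beta|_T}$ I would invoke Theorem \ref{teoremagalois1}: part (i) guarantees that $\beta_T = (A_t,\beta_t)_{t\in T}$ is an action of $T$ and that $A$ is $\beta_T$-Galois over $B := A^{\beta|_T}$, while parts (ii) and (iii) state exactly that $B$ is $A^\beta$-separable and $\beta$-strong. Hence this map also lands in the prescribed target class.

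Next I would establish that the two maps are mutually inverse. Beginning with a $\beta$-complete inverse subsemigroup $T$ and setting $B = A^{\beta|_T}$, Theorem \ref{teoremagalois1}(iv) gives $S_B = T$, so the composite $S_{(\cdot)} \circ A^{\beta|_{(\cdot)}}$ is the identity on $\beta$-complete inverse subsemigroups. Conversely, beginning with an $A^\beta$-separable, $\beta$-strong subalgebra $B$ and setting $T = S_B$, Theorem \ref{teoremagalois2} gives $A^{\beta|_{S_B}} = B$, so the composite $A^{\beta|_{(\cdot)}} \circ S_{(\cdot)}$ is the identity on separable $\beta$-strong subalgebras. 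Together these two identities yield the claimed one-to-one correspondence, and I would remark that the standing hypotheses (finiteness of $S$, injectivity and unitality of $\beta$, $E$-unitarity, the Galois condition, and $A_s \neq 0$ for all $s$) are exactly the assumptions required by the auxiliary theorems being cited.

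The present statement itself poses no real obstacle, being a formal consequence of the earlier theorems; the genuine difficulty is internal to them. If one were to prove it from scratch, the hard steps would be, on the one hand, the nontrivial inclusion $S_B \subseteq T$ in Theorem \ref{teoremagalois1}(iv), which rests on a minimality argument over the natural partial order combined with the $f$-completeness of $\emph{\text{Iso}}_{pu}(A)$ and the $\beta$-completeness of $T$ to exclude an element of $S_B \setminus T$; and, on the other hand, Theorem \ref{teoremagalois2}, which requires building the function algebra $E \simeq PA_\beta(S)$, transporting the Galois data through the isomorphism $\eta$, and using separability to produce the pairwise orthogonal idempotents that generate $E^{\beta'|_T}$. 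These are precisely the places where the $E$-unitary hypothesis (so that $\sigma = {\sim}$) and the passage to the partial action $\alpha$ of $G = S/\sigma$ are doing the real work.
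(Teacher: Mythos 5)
Your proposal is correct and follows essentially the same route as the paper, whose own proof simply cites Theorem \ref{teoremagalois1} and Theorem \ref{teoremagalois2}; your assembly spells out exactly the intended argument (well-definedness of $B \mapsto S_B$ via the proposition that $S_B$ is $\beta$-complete, well-definedness of $T \mapsto A^{\beta|_T}$ via Theorem \ref{teoremagalois1}(i)--(iii), and mutual inverseness via Theorem \ref{teoremagalois1}(iv) and Theorem \ref{teoremagalois2}). Your closing remarks about where the genuine difficulty lies are also accurate.
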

\begin{proof}
It follows from Theorem \ref{teoremagalois1} and Theorem \ref{teoremagalois2}.

\end{proof}

\section{General case}

We now extend the Galois correspondence to general inverse semigroups (not necessarily $E$-unitary) acting on commutative rings (not necessarily injectively).  Throughout this section, let $S$ be a finite inverse semigroup without zero, and let $\beta = (A_s,\beta_s)_{s \in S}$ denote a unital action of $S$ on a commutative ring $A$. Observe that $\beta$ can be regarded as an inverse semigroup homomorphism $\beta : S \to \text{Iso}_{pu}(A)$ defined by $\beta(s) := \beta_s$. We will consistently use this notation throughout the section.

\begin{prop}
    Assume that $A$ is $\beta$-Galois over $A^\beta$ and that $A_s \neq 0$, for all $s \in S$. Then $\beta(S) = \{ \beta_s : s \in S \}$ is an $E$-unitary inverse semigroup.
\end{prop}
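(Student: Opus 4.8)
The plan is to verify $E$-unitarity directly from the definition, feeding the Galois coordinate system into a single short computation. Since $\beta\colon S\to\mathrm{Iso}_{pu}(A)$ is a homomorphism of inverse semigroups, its image $\beta(S)$ is an inverse subsemigroup of $\mathrm{Iso}_{pu}(A)$, and its natural partial order is the restriction of that of $\mathrm{Iso}_{pu}(A)$. Thus it suffices to fix an idempotent $\phi\in E(\beta(S))$ and an element $\beta_s\in\beta(S)$ with $\phi\preceq\beta_s$, and to prove that $\beta_s$ is idempotent. First I would pin down $\phi$: any idempotent of $\beta(S)$ has the form $\beta_f$ with $f\in E(S)$, because if $\phi=\beta_u$ satisfies $\phi^2=\phi$ then $\phi=\phi\phi^{-1}=\beta_u\beta_{u^{-1}}=\beta_{uu^{-1}}$ with $uu^{-1}\in E(S)$; moreover $\beta_f=\mathrm{id}_{A_f}$ and $A_f\neq 0$ by the standing hypothesis that $A_s\neq 0$ for all $s$.

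Next I would translate $\beta_f\preceq\beta_s$. As recorded in the proof of Lemma~\ref{lemaparceunit1}, $\preceq$ in $\mathrm{Iso}_{pu}(A)$ means ``is a restriction of'', so $\beta_f=\mathrm{id}_{A_f}$ being a restriction of $\beta_s$ forces $A_f\subseteq A_{s^{-1}}\cap A_s$ together with $\beta_s|_{A_f}=\mathrm{id}_{A_f}$; in particular $\beta_s(1_f)=1_f$ and $1_f1_{s^{-1}}=1_f$. The core of the argument is then to exploit a Galois coordinate system $\{x_i,y_i\}$, for which $\sum_i x_i\beta_s(y_i1_{s^{-1}})=\sum_{e\in E(S)}1_e\delta_{e,s}$ for every $s\in S$. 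Applying this with $s=e\in E(S)$ and using $\beta_e=\mathrm{id}_{A_e}$ gives $\bigl(\sum_i x_iy_i\bigr)1_e=1_e$ for all $e$, and since $A=\sum_{e}A_e$ this yields $\sum_i x_iy_i=1_A$.

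Now I would multiply the coordinate identity for our $s$ by $1_f$. Because $\beta_s$ fixes $A_f$ pointwise, each term simplifies as $\beta_s(y_i1_{s^{-1}})1_f=\beta_s(y_i1_{s^{-1}})\beta_s(1_f)=\beta_s(y_i1_f)=y_i1_f$, so the left-hand side collapses to $\bigl(\sum_i x_iy_i\bigr)1_f=1_f$. The right-hand side, multiplied by $1_f$, is $\sum_{e\in E(S)}1_e\delta_{e,s}\cdot 1_f$, which vanishes whenever $s\notin E(S)$. Since $A_f\neq 0$, i.e.\ $1_f\neq 0$, this is impossible, so $s\in E(S)$ and hence $\beta_s=\mathrm{id}_{A_s}$ is idempotent, proving that $\beta(S)$ is $E$-unitary.

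I expect the main obstacle to be conceptual rather than computational: correctly reading $\beta_f\preceq\beta_s$ as the statement that $\beta_s$ restricts to the identity on the \emph{nonzero} ideal $A_f$, and recognizing that the right device is to multiply the Galois identity by $1_f$; once this is in place, the simplification $\beta_s(y_i1_{s^{-1}})1_f=y_i1_f$ together with $\sum_i x_iy_i=1_A$ makes the contradiction immediate. Two minor points to handle with care are confirming that the idempotents of $\beta(S)$ are exactly the $\beta_f$ with $f\in E(S)$, and that the natural partial order on $\beta(S)$ coincides with the one inherited from $\mathrm{Iso}_{pu}(A)$.
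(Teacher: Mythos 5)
Your proof is correct and follows essentially the same route as the paper's: fix an idempotent $\beta_f$ ($f\in E(S)$) below $\beta_s$, feed the Galois coordinate system into the identity $1_f=\sum_i x_i\beta_s(y_i1_{s^{-1}})1_f$ using that $\beta_s$ restricts to the identity on $A_f$, and conclude $1_f=0$, contradicting $A_f\neq 0$, unless $s\in E(S)$. The only differences are bookkeeping — you isolate $\sum_i x_iy_i=1_A$ as a separate step and explicitly verify that idempotents of $\beta(S)$ are the $\beta_f$ with $f\in E(S)$, both of which the paper uses implicitly.
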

\begin{proof}
    Suppose that $e \in E(S)$ and $\beta_e = \beta_s|_{A_e}$, for some $s \in S$. If $s \in E(S)$, there is nothing to prove. If $s \notin E(S)$, then
    \begin{align*}
        1_e & = \sum_{i=1}^n x_i\beta_e(y_i1_e) = \sum_{i=1}^n x_i\beta_s(y_i1_e) \\
        & = \sum_{i=1}^n x_i\beta_s(y_i1_{s^{-1}})\beta_s(1_e)  = \sum_{i=1}^n x_i\beta_s(y_i1_{s^{-1}})1_e = 01_e = 0.
    \end{align*}

    But by hypothesis $1_e \neq 0$, from where we obtain a contradiction. Thus $s \in E(S)$ and $\beta_s \in E(\beta(S))$.
\end{proof}

This result is essential for extending Theorem \ref{teocorrgalparceunit} to the general case. Additionally, we will rely on the following lemma. While it is important, its proof is straightforward and will therefore be omitted.

\begin{lemma}
    The inverse semigroup $\beta(S)$ acts on $A$ via $\beta(s) \mapsto \beta_s$. If $T$ is an inverse subsemigroup of $\beta(S)$, then $T$ is $\beta$-complete if and only if $T$ is $f$-complete.
\end{lemma}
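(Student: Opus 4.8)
The plan is to read the definition of $\beta$-completeness through the identity action of $\beta(S)$ on $A$: since each $\phi\in\beta(S)$ acts on $A$ as the partial bijection $\phi$ itself, for a compatible subset $P\subseteq\beta(S)$ the hypothesis $\beta_u=\sum_{s\in P}\beta_s$ in clause (ii) says literally that $u=\sum_{s\in P}s$ as elements of $\text{Iso}_{pu}(A)$. By the Proposition asserting that $\text{Iso}_{pu}(A)$ is $f$-complete, and by the identity $f\vee g=f+g$ established in its proof, the join of a finite compatible family in $\text{Iso}_{pu}(A)$ is exactly its sum; moreover, because $S$ and hence $\beta(S)$ are finite, every compatible subset is finite, so the finiteness clause in the definition of $f$-completeness imposes no extra restriction. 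Thus the first step is to record that, for compatible $P\subseteq\beta(S)$, the join $\bigvee P$ exists in $\beta(S)$ and equals $\sum_{s\in P}s$ if and only if $\sum_{s\in P}s\in\beta(S)$, and that the condition $\beta_u=\sum_{s\in P}\beta_s$ is precisely the statement that this ambient sum is realized inside $\beta(S)$.

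With this dictionary in place, both implications are short. For $\beta$-complete $\Rightarrow$ $f$-complete, take a nonempty compatible $P\subseteq T$ and form $u=\sum_{s\in P}s$ in $\text{Iso}_{pu}(A)$; the identity action makes $\beta_u=\sum_{s\in P}\beta_s$ hold verbatim, so clause (ii) yields $u\in T$, and since $u=\bigvee P$ this shows that $T$ is closed under the joins of its finite compatible subsets, i.e. $T$ is $f$-complete. Conversely, assuming $T$ is $f$-complete, clause (i) is the requirement $E(\beta(S))\subseteq T$, which I would match directly against fullness, and clause (ii) follows because any compatible $P\subseteq T$ whose sum lies in $\beta(S)$ has a join $\bigvee P\in T$ by $f$-completeness; as every upper bound of $P$ dominates the ambient sum $\sum_{s\in P}s$ while the latter, once known to lie in $T$, is itself an upper bound, the join coincides with $\sum_{s\in P}s$, placing it in $T$.

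The delicate points, and where I expect the real work to sit, are twofold. First one must reconcile the two meanings of ``join'': the $\bigvee P$ appearing in the definition of $f$-completeness is a least upper bound computed inside $T$, whereas the sum $\sum_{s\in P}s$ is the ambient join in $\text{Iso}_{pu}(A)$; Lemma \ref{lem-compatib} and the explicit description of joins as sums are exactly what force these to agree once the sum is known to belong to the subsemigroup, and the argument must invoke that any upper bound of $P$ in $\text{Iso}_{pu}(A)$ dominates $\sum_{s\in P}s$. Second, the fullness clause (i) of $\beta$-completeness has no counterpart in the bare definition of $f$-completeness, so the equivalence is to be understood for the full inverse subsemigroups relevant to the Galois correspondence, namely the semigroups $S_B$, which were shown to be full; under this reading clause (i) transfers on the nose and the substantive content of the lemma is the equivalence between clause (ii) and closure under finite compatible joins.
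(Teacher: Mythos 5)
There is no proof in the paper to compare against here: the authors declare the proof of this lemma ``straightforward'' and omit it, so your proposal has to stand on its own, and its forward direction does not. Your opening dictionary is the correct reading of clause (ii): for the tautological action of $\beta(S)$, clause (ii) says precisely that whenever the ambient sum $\sum_{s\in P}s$ of a compatible $P\subseteq T$ \emph{happens to lie in} $\beta(S)$, it must lie in $T$. But your argument for ``$\beta$-complete $\Rightarrow$ $f$-complete'' then drops this proviso: you form $u=\sum_{s\in P}s$ in $\text{Iso}_{pu}(A)$ and invoke clause (ii), yet $\beta_u$ is undefined, and clause (ii) is silent, unless $u\in\beta(S)$. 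When $\sum_{s\in P}s\notin\beta(S)$, nothing forces $P$ to have a join in $T$, and this genuinely occurs under all the standing hypotheses of the section. Concretely, let $A=ke_1\oplus ke_2\oplus ke_3\oplus ke_4$ ($k$ a field), let $\theta$ be the automorphism exchanging $e_1\leftrightarrow e_2$ and $e_3\leftrightarrow e_4$, and set $D_1=A(e_1+e_2+e_3)$, $D_2=A(e_1+e_2+e_4)$, $I=A(e_1+e_2)$. The six maps $S=\{\theta|_{D_1},\theta|_{D_2},\theta|_{I},\text{id}_{D_1},\text{id}_{D_2},\text{id}_{I}\}$ form a finite inverse semigroup without zero, acting tautologically (hence injectively and unitally) on $A$ with all ideals nonzero, and $A$ is $\beta$-Galois over $A^\beta$ (take $x_1=y_1=e_1+e_3$, $x_2=y_2=e_2+e_4$: one checks $\sum_i x_i\beta_s(y_i1_{s^{-1}})$ equals $1_s$ for the three idempotents and $0$ for the three non-idempotents). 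Then $T=\beta(S)$ satisfies both clauses of $\beta$-completeness trivially, but the compatible pair $\{\theta|_{D_1},\theta|_{D_2}\}$ has \emph{no} upper bound in $\beta(S)$ at all (its ambient sum is $\theta\notin\beta(S)$), so $T$ is not $f$-complete. Thus the implication you assert, and with it the lemma as literally stated, is false; the gap in your proof sits exactly at the point of failure and cannot be patched without changing the statement (for instance by re-interpreting clause (ii) as closure under sums computed in $\text{Iso}_{pu}(A)$ rather than under joins realized in $\beta(S)$, which is presumably what the authors intend).

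The converse direction also has a flaw, though a repairable one. Having produced $v=\bigvee P\in T$ from $f$-completeness, you argue that $v$ coincides with $\sum_{s\in P}s$ because the latter, ``once known to lie in $T$'', is an upper bound; but its membership in $T$ is exactly what is to be proved, so as written the step is circular. The correct finish uses fullness: a full inverse subsemigroup is an order ideal (if $x\preceq t\in T$ then $x=t(x^{-1}x)$ with $x^{-1}x\in E(\beta(S))\subseteq T$), and since every upper bound of $P$ dominates the ambient sum, the hypothesis $\sum_{s\in P}s\in\beta(S)$ together with $\sum_{s\in P}s\preceq v\in T$ yields $\sum_{s\in P}s\in T$ directly. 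Finally, your observation that $f$-completeness carries no fullness requirement, so that the equivalence can only be meant for full subsemigroups, is correct (a single idempotent of $\beta(S)$ is an $f$-complete inverse subsemigroup but is never $\beta$-complete when $\beta(S)$ has at least two idempotents); this is a second respect in which the statement, not merely your argument, needs rewording. Since the paper offers no proof, these defects should be attributed to the lemma itself, but a blind proof attempt ought to have surfaced them rather than glossed over the membership condition on which everything turns.
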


We have chosen to preserve the terminology ``$f$-complete". However, it is relevant to note that once $T$ is finite, the concepts of complete and $f$-complete coincide. 

Finally, we will identify the class of inverse subsemigroups that will appear in the Galois correspondence.

\begin{defi}
    A full inverse subsemigroup $T$ of $S$ is called \emph{$\beta$-maximal} if $\beta(T)$ is $f$-complete, and  for all $t \in T$, we have $\beta^{-1}(t) = \{s \in S : \beta_s = \beta_t\} \subseteq T$.
\end{defi}

\begin{theorem}\textbf{Galois Correspondence.}
    Let $S$ be a finite inverse semigroup that acts on a commutative ring $A$ via a unital $\beta = (A_s,\beta_s)_{s \in S}$. Assume that $A$ is $\beta$-Galois over $A^\beta$ and that $A_s \neq 0$, for all $s \in S$. Then there is a one-to-one correspondence between the $\beta$-maximal inverse subsemigroups $T$ of $S$ and the separable, $\beta$-strong $A^\beta$-subalgebras $B$ of $A$ given by $B \mapsto \beta^{-1}(\beta(S)_B)$ with inverse $T \mapsto A^{\beta|_{T}}$.
\end{theorem}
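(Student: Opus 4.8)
The plan is to reduce everything to the $E$-unitary, injective situation already settled in Theorem \ref{teocorrgalparceunit}, applied to the image semigroup $\beta(S) \subseteq \text{Iso}_{pu}(A)$, and then to transport the resulting correspondence back along the surjective homomorphism $\beta : S \to \beta(S)$. First I would equip $\beta(S)$ with its natural injective unital action $\overline{\beta}$ given by $\beta_s \mapsto \beta_s$ and verify that it satisfies all the hypotheses of Theorem \ref{teocorrgalparceunit}. It is finite, and it is $E$-unitary by the preceding Proposition. Since the defining conditions of $A^\beta$ and $A^{\overline{\beta}}$ range over the same family of maps $\{\beta_s\}$, we have $A^{\overline{\beta}} = A^\beta$. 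The crucial point is that $A$ is $\overline{\beta}$-Galois over $A^{\overline{\beta}}$: any $\beta$-Galois coordinate system $\{x_i,y_i\}$ works verbatim, because $E(\beta(S)) = \beta(E(S))$ and, by the Proposition, $\beta_s \in E(\beta(S))$ holds if and only if $s \in E(S)$, so that $\sum_{e \in E(S)} 1_e \delta_{e,s}$ and $\sum_{\epsilon \in E(\beta(S))} 1_\epsilon \delta_{\epsilon,\beta_s}$ coincide for every $s$. Theorem \ref{teocorrgalparceunit} then yields a bijection $B \mapsto \beta(S)_B$, $\mathcal{T} \mapsto A^{\overline{\beta}|_{\mathcal{T}}}$ between the separable, $\overline{\beta}$-strong $A^\beta$-subalgebras $B$ of $A$ and the $\overline{\beta}$-complete inverse subsemigroups $\mathcal{T}$ of $\beta(S)$.

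Next I would identify the two sides of this correspondence with the objects in the statement. On the algebra side, since $A^\beta = A^{\overline{\beta}}$, $A_s = A_{\beta_s}$, and $S_B = \beta^{-1}(\beta(S)_B)$, the conditions $s^{-1}t \notin S_B$ and $\beta_s^{-1}\beta_t \notin \beta(S)_B$ agree; hence $B$ is $\beta$-strong (relative to $S$) if and only if it is $\overline{\beta}$-strong (relative to $\beta(S)$), so the two classes of subalgebras literally coincide. On the subsemigroup side, I would introduce the maps $T \mapsto \beta(T)$ and $\mathcal{T} \mapsto \beta^{-1}(\mathcal{T})$ and show they are mutually inverse bijections between the $\beta$-maximal inverse subsemigroups of $S$ and the $\overline{\beta}$-complete inverse subsemigroups of $\beta(S)$. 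By the preceding Lemma, for subsemigroups of $\beta(S)$ the notions of $\overline{\beta}$-complete and $f$-complete coincide; thus if $T$ is $\beta$-maximal, then $\beta(T)$ is $f$-complete and full (fullness coming from $\beta(E(S)) = E(\beta(S))$), hence $\overline{\beta}$-complete, while if $\mathcal{T}$ is $\overline{\beta}$-complete, then $\beta^{-1}(\mathcal{T})$ is a full, $\beta$-saturated inverse subsemigroup whose image $\beta(\beta^{-1}(\mathcal{T})) = \mathcal{T}$ is $f$-complete, i.e. it is $\beta$-maximal.

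The two round trips are identities: $\beta(\beta^{-1}(\mathcal{T})) = \mathcal{T}$ by surjectivity of $\beta$ onto $\beta(S)$, and $\beta^{-1}(\beta(T)) = T$ precisely because $\beta$-maximality of $T$ forces $\beta^{-1}(\beta_t) \subseteq T$ for all $t \in T$ (the reverse inclusion being automatic). Composing the two bijections then gives exactly the stated maps: the algebra-to-semigroup direction becomes $B \mapsto \beta(S)_B \mapsto \beta^{-1}(\beta(S)_B)$, and the semigroup-to-algebra direction becomes $T \mapsto \beta(T) \mapsto A^{\overline{\beta}|_{\beta(T)}} = A^{\beta|_T}$, where the last equality holds because $\beta(T) = \{\beta_t : t \in T\}$ makes the two invariance conditions identical.

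I expect the main technical obstacle to be the bookkeeping in the subsemigroup bijection rather than any new computation: one must confirm that $\beta$-maximality is exactly the property making $\beta$ and $\beta^{-1}$ mutually inverse on these classes, with fullness, closure under the relevant joins, and $\beta$-saturation all playing their role, and that the Lemma's identification of $\overline{\beta}$-completeness with $f$-completeness is used in the correct direction on each side. All of the analytic content — separability, the $\beta$-strong condition, and the invariant-algebra recovery — is supplied ready-made by Theorem \ref{teocorrgalparceunit}, so the argument is essentially a transport of structure.
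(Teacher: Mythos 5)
Your proposal is correct and takes essentially the same approach as the paper's own proof: both reduce to Theorem \ref{teocorrgalparceunit} applied to the injective, $E$-unitary action of $\beta(S)$ on $A$, identify the $\beta$-strong subalgebras with the strong subalgebras for the image action, and transport the correspondence back through the bijection between $\beta$-maximal inverse subsemigroups of $S$ and $f$-complete inverse subsemigroups of $\beta(S)$. The only difference is one of detail: you spell out the verifications (the Galois-coordinate transfer using $\beta_s \in E(\beta(S)) \Leftrightarrow s \in E(S)$, the strongness equivalence via $S_B = \beta^{-1}(\beta(S)_B)$, and the two round-trip identities) that the paper dismisses as immediate.
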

\begin{proof}
    Notice first that $A$ is $\beta$-Galois over $A^\beta$ regarding the action $\beta$ of $S$ on $A$ if and only if $A$ is $\beta'$-Galois over $A^{\beta'}$ regarding the action $\beta'$ of $\beta(S)$ on $A$. It is easy to see that $A^\beta = A^{\beta'}$. 

    Now, $\beta(S)$ is an $E$-unitary finite inverse semigroup without zero that acts injectively on $A$ via $\beta'$. Thus we are on the hypotheses of Theorem \ref{teocorrgalparceunit}. Hence there is a one-to-one correspondence between the $f$-complete inverse subsemigroups of $\beta(S)$ and the separable, $\beta$-strong $A^\beta$-subalgebras of $A$. The verification that a subalgebra is $\beta$-strong if and only if it is $\beta'$-strong is immediate. 

    Finally, it only remains to us to observe that there is a one-to-one correspondence between the $\beta$-maximal inverse subsemigroups of $S$ and the $f$-complete inverse subsemigroups of $\beta(S)$. Then the result follows.
\end{proof}

\section{Inverse semigroups with zero}

This section is dedicated to discuss how the theory behaves in the case of inverse semigroups with zero. We will maintain the notations from the previous section, except that now $S$ will be assumed to be an inverse semigroup with zero.

 Given a subset $T \subseteq S$, we set:
\begin{align*}
    T^* & = \begin{cases}
        T \setminus \{0\}, \text{ if } 0 \in T, \\
        T, \text{ if } 0 \notin T.
    \end{cases}
\end{align*}

A pair of elements $s,t \in S$ is said \emph{strongly compatible} if $s = t = 0$ or if $s,t \neq 0$ and $s^{-1}t, st^{-1} \in E(S)^*$. In this case, we write $s \approx t$. Clearly $\approx$ is reflexive and symmetric, but not necessarily transitive.

We say that an inverse semigroup with zero is \emph{0-$E$-unitary} if $e \preceq s$, for some $e \in E(S)^*$, implies $s \in E(S)$.

\begin{lemma} \cite[Lemma 9.1.1]{lawson1998inverse}\label{lemma911}
    Let $S$ be an inverse semigroup with zero.

    \begin{enumerate}
        \item[(i)] If $s, t \in S^*$ and $s \approx t$, then $s$ and $t$ have a non-zero lower bound.

        \item[(ii)] Assume that $S$ is 0-$E$-unitary. Then a pair of elements $s,t \in S^*$ has a non-zero lower bound if and only if $s \approx t$.
    \end{enumerate}
\end{lemma}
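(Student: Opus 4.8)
The plan is to prove Lemma~\ref{lemma911} directly from the definitions, handling the two parts separately and using the $0$-$E$-unitary hypothesis only in part (ii). First I would dispose of part (i): suppose $s,t \in S^*$ with $s \approx t$, so by definition $s^{-1}t, st^{-1} \in E(S)^*$. The natural candidate for a non-zero lower bound is the element $w = ts^{-1}s$ (equivalently $s t^{-1} t$), which is the standard meet-like construction. I would verify that $w \preceq s$ and $w \preceq t$ by exhibiting idempotents $f,g$ with $w = sf$ and $w = tg$, using the defining relation $\preceq$ in \eqref{paror} together with the idempotency of $s^{-1}t$ and $st^{-1}$. The crucial point is that $w \neq 0$: since $st^{-1} \in E(S)^*$ is a \emph{non-zero} idempotent, and $w = (st^{-1})t$ (after rewriting), one shows $w$ cannot collapse to $0$ because multiplying the non-zero idempotent $st^{-1}$ into $t$ recovers a non-zero element; more carefully, $w w^{-1} = s t^{-1} \neq 0$, so $w \neq 0$.

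For part (ii) I would prove both implications. The forward direction ($s \approx t \Rightarrow$ non-zero lower bound) is immediate from part (i), since that implication did not use the $0$-$E$-unitary hypothesis. The substantive direction is the converse: assume $s,t \in S^*$ admit a non-zero lower bound $u \in S^*$, i.e.\ $u \preceq s$ and $u \preceq t$ with $u \neq 0$, and deduce $s \approx t$. From $u \preceq s$ and $u \preceq t$ I would write $u = s e$ and $u = t f$ for idempotents $e,f \in E(S)$, and then compute $s^{-1}t$ and $st^{-1}$ and show they land in $E(S)^*$. The idea is that $u^{-1}u$ and $uu^{-1}$ are non-zero idempotents below the relevant products, and then invoke that $S$ is $0$-$E$-unitary: a non-zero idempotent lying below $s^{-1}t$ forces $s^{-1}t \in E(S)$, and non-zeroness of $u$ guarantees $s^{-1}t \neq 0$, so $s^{-1}t \in E(S)^*$; symmetrically for $st^{-1}$.

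The main obstacle I anticipate is the bookkeeping in the converse of part (ii): producing a \emph{non-zero} idempotent that sits below $s^{-1}t$ (and below $st^{-1}$) so that the $0$-$E$-unitary property can be triggered. The clean way is to observe that if $u \preceq s$ and $u \preceq t$ with $u \neq 0$, then $u \preceq s$ gives $u^{-1}u \preceq s^{-1}s$ and one can check $u^{-1}u \preceq s^{-1}t$ directly: indeed $s^{-1}t \cdot (u^{-1}u) = s^{-1}(t u^{-1} u)$, and since $u \preceq t$ one has $t u^{-1} u = u$, so $s^{-1}t \cdot u^{-1}u = s^{-1}u = s^{-1} s e' = \dots = u^{-1}u$ after simplification, exhibiting the non-zero idempotent $u^{-1}u$ below $s^{-1}t$. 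The analogous computation with $uu^{-1}$ handles $st^{-1}$. Once these two non-zero idempotent lower bounds are in place, the $0$-$E$-unitary hypothesis finishes the argument mechanically, and the non-vanishing of $u$ ensures the resulting idempotents are genuinely in $E(S)^*$ rather than $0$. Since this is cited as \cite[Lemma 9.1.1]{lawson1998inverse}, I would expect the paper to simply invoke the reference, but the self-contained argument above is the route I would take if asked to reprove it.
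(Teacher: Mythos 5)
Your proof is correct. The paper offers no proof of its own for this lemma (it is quoted directly from \cite[Lemma 9.1.1]{lawson1998inverse}), and your argument is essentially the standard one from that reference: your candidate lower bound $st^{-1}t=(st^{-1})t$ coincides with the element $z=ss^{-1}t$ used in Lawson's proof (and invoked later in the paper's proof of Proposition \ref{propcaracapprox}), since $st^{-1}=ts^{-1}$ once these are idempotents, and your handling of the converse in (ii) — exhibiting the non-zero idempotents $u^{-1}u \preceq s^{-1}t$ and $uu^{-1} \preceq st^{-1}$ and then applying the $0$-$E$-unitary hypothesis, with non-vanishing of $s^{-1}t$ and $st^{-1}$ forced by $u \neq 0$ — is exactly the right mechanism.
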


\begin{prop} \label{propcaracapprox}
    Let $S$ be an inverse semigroup with zero.

    \begin{enumerate}
        \item[(i)] If $s, t \in S^*$ and $s \approx t$, then $s \wedge t$ is defined and $s \wedge t \in S^*$. Furthermore, $(s \wedge t)(s \wedge t)^{-1} = ss^{-1}tt^{-1}$ and $(s \wedge t)^{-1}(s \wedge t) = s^{-1}st^{-1}t$.

        \item[(ii)] Assume that $S$ is 0-$E$-unitary. Then a pair of elements $s,t \in S^*$ is such that $s \wedge t \in S^*$ if and only if $s \approx t$.
    \end{enumerate}
\end{prop}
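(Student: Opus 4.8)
The plan is to prove (i) by reducing strong compatibility to ordinary compatibility and invoking the meet theory for compatible elements, and then to obtain (ii) almost for free from (i) together with Lemma~\ref{lemma911}.

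First, for (i): if $s,t \in S^*$ and $s \approx t$, then by definition $s^{-1}t, st^{-1} \in E(S)^* \subseteq E(S)$, so $s$ and $t$ are compatible, $s \sim t$. By \cite[Lemma 1.4.11]{lawson1998inverse}, compatible elements of an inverse semigroup have a meet, hence $s \wedge t$ is defined. The two idempotent identities are the standard description of the range and domain idempotents of a meet of compatible elements: for $s \sim t$ one has $(s\wedge t)(s\wedge t)^{-1} = ss^{-1}\wedge tt^{-1}$ and $(s\wedge t)^{-1}(s\wedge t) = s^{-1}s\wedge t^{-1}t$, and since the meet of two idempotents in the semilattice $E(S)$ is their product, this reads $(s\wedge t)(s\wedge t)^{-1} = ss^{-1}tt^{-1}$ and $(s\wedge t)^{-1}(s\wedge t) = s^{-1}st^{-1}t$. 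If one prefers an explicit witness, one checks directly that $ss^{-1}t$ is a lower bound of both $s$ and $t$ (it equals $s(s^{-1}t)$ with $s^{-1}t$ idempotent, and $(ss^{-1})t$ with $ss^{-1}$ idempotent) and that any common lower bound $c$ satisfies $cc^{-1} = cc^{-1}ss^{-1}$ and $c = cc^{-1}t$, so $cc^{-1}(ss^{-1}t) = c$, i.e. $c \preceq ss^{-1}t$; thus $s\wedge t = ss^{-1}t$, and both identities then drop out by commuting idempotents.

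It remains to see $s\wedge t \neq 0$, which is the only point genuinely beyond citation. Here I would use Lemma~\ref{lemma911}(i): since $s \approx t$ with $s,t \in S^*$, the pair admits a nonzero lower bound $c$. Because $s\wedge t$ is the greatest lower bound, $c \preceq s\wedge t$; if $s\wedge t$ were $0$, then $c \preceq 0$ would force $c = 0$, a contradiction, so $s\wedge t \in S^*$. Equivalently, one may argue directly from the range formula that $s^{-1}(ss^{-1}tt^{-1})t = s^{-1}t \neq 0$, whence $(s\wedge t)(s\wedge t)^{-1} = ss^{-1}tt^{-1} \neq 0$ and therefore $s\wedge t \neq 0$.

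For (ii), assume $S$ is 0-$E$-unitary. The implication ``$s \approx t \Rightarrow s\wedge t \in S^*$'' is precisely part (i) and uses no extra hypothesis. Conversely, if $s\wedge t \in S^*$, then $s\wedge t$ is a nonzero lower bound of the pair $s,t \in S^*$; Lemma~\ref{lemma911}(ii) — exactly the place where 0-$E$-unitarity enters — states that a pair in $S^*$ has a nonzero lower bound if and only if $s \approx t$, so $s\approx t$ follows. The main obstacle is essentially bookkeeping rather than conceptual: the only step that is not a direct citation is the nonzeroness check in (i), while the idempotent identities and the equivalence in (ii) are assembled from \cite[Lemma 1.4.11]{lawson1998inverse} and Lemma~\ref{lemma911}.
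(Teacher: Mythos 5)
Your proposal is correct and takes essentially the same route as the paper's proof: both identify the meet explicitly as $ss^{-1}t$ (the paper verifies the greatest-lower-bound property directly, while you additionally note it follows from Lawson's meet lemma for compatible elements, since $\approx$ implies $\sim$), both read off the identities $(s\wedge t)(s\wedge t)^{-1} = ss^{-1}tt^{-1}$ and $(s\wedge t)^{-1}(s\wedge t) = s^{-1}st^{-1}t$ from that witness, and both obtain nonzeroness of the meet and the equivalence in (ii) from Lemma~\ref{lemma911}. There are no gaps.
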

\begin{proof}
    By the proof of \cite[Lemma 9.1.1]{lawson1998inverse}, the non-zero lower bound of $s$ and $t$ is $z = ss^{-1}t$.

\noindent \emph{Claim.} $z = s \wedge t$.

Indeed, if $w \preceq s,t$, then $w = ww^{-1}w \preceq ss^{-1}t = z$.

Moreover, we have that $zz^{-1} = ss^{-1}tt^{-1}$. Since $st^{-1}t$ is also a meet of $s$ and $t$, it follows that $z = st^{-1}t$, so $z^{-1}z = s^{-1}st^{-1}t$. Clearly $zz^{-1}$ and $z^{-1}z$ are non-zero, otherwise $z = 0$. Now the result follows by Lemma \ref{lemma911}.
    
\end{proof}

We say that a poset is \emph{directed} if any two elements have a lower bound. This concept characterizes when the relation $\approx$ is transitive, as we can see below.

\begin{prop} \cite[Theorem 9.1.2]{lawson1998inverse} \label{prop73}
    Let $S$ be an inverse semigroup with zero. Then $\approx$ is transitive if and only if $S$ is 0-$E$-unitary and for all $s \in S^*$ the set $[s]^*$ is directed.
\end{prop}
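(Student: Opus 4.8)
The plan is to prove the two implications separately, invoking Lemma \ref{lemma911} and Proposition \ref{propcaracapprox}, and to dispose of the degenerate case at the outset: whenever one of the elements involved equals $0$, strong compatibility forces its partner to be $0$ as well, so every assertion below holds trivially. Hence I assume throughout that all elements lie in $S^*$, and I interpret $[s]^*$ as the set of nonzero elements below $s$, i.e. $[s]^* = \{t \in S^* : t \preceq s\}$. I first record the elementary identity used repeatedly: if $0 \ne u \preceq s$ with $s \ne 0$, then $u \approx s$. Indeed, from $u = uu^{-1}s$ one premultiplies by $u^{-1}$ to get $u^{-1}s = u^{-1}u$, and applying the same to $u^{-1} \preceq s^{-1}$ gives $us^{-1} = uu^{-1}$; both are nonzero idempotents, so $u \approx s$.

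For the direction ``$\approx$ transitive $\Rightarrow$ $0$-$E$-unitary and every $[s]^*$ directed'', directedness is immediate: if $u,v \in [s]^*$, then $u \approx s$ and $s \approx v$ by the recorded fact, so $u \approx v$ by transitivity, and Proposition \ref{propcaracapprox}(i) yields $u \wedge v \in S^*$, a nonzero lower bound of $u,v$ lying in $[s]^*$. For $0$-$E$-unitarity, suppose $e \preceq s$ with $e \in E(S)^*$ (so $s \ne 0$) and set $g = ss^{-1} \in E(S)^*$. Since $e \preceq s$ gives $e = ee^{-1} \preceq ss^{-1} = g$, the idempotents satisfy $g \approx e$ (as $ge = e \ne 0$), while $e \approx s$ by the recorded fact. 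Transitivity then forces $g \approx s$, whence $g^{-1}s = ss^{-1}s = s \in E(S)^*$, so $s$ is idempotent, as required.

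For the converse, assume $S$ is $0$-$E$-unitary and every $[s]^*$ is directed, and let $s \approx t$ and $t \approx v$ in $S^*$. Proposition \ref{propcaracapprox}(i) provides nonzero meets $s \wedge t$ and $t \wedge v$, both of which lie in $[t]^*$ since they are nonzero and below $t$. Directedness of $[t]^*$ then furnishes a nonzero $w \in [t]^*$ with $w \preceq s \wedge t$ and $w \preceq t \wedge v$; consequently $w$ is a nonzero lower bound of $s$ and $v$, and Lemma \ref{lemma911}(ii), which is exactly where $0$-$E$-unitarity enters, gives $s \approx v$. This establishes transitivity.

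The only genuinely non-formal step is deducing $0$-$E$-unitarity in the forward direction; the main obstacle is spotting the right comparison. The idea is to pit $s$ against its range idempotent $ss^{-1}$ through the common nonzero idempotent $e$, so that the chain $g \approx e \approx s$ collapses $s$ into $E(S)$. Everything else reduces to the two cited results together with the standard order-theoretic identities for $\preceq$ and $\wedge$, so I would keep those computations brief.
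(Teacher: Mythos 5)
Your proof is correct, but there is no internal argument to compare it against: the paper states this proposition purely as a quotation of \cite[Theorem 9.1.2]{lawson1998inverse} and supplies no proof of its own, so what you have written is a reconstruction of the cited result from the two ingredients the paper does record. As such it holds up. The recorded fact is right: $0 \neq u \preceq s$ gives $u = uu^{-1}s$, hence $u^{-1}s = u^{-1}u$ and $us^{-1} = uu^{-1}$, both nonzero idempotents, so $u \approx s$. In the forward direction, directedness of $[s]^*$ follows from transitivity combined with Proposition \ref{propcaracapprox}(i) (the meet $u \wedge v$ is nonzero and lies below $u \preceq s$), and $0$-$E$-unitarity follows from the chain $ss^{-1} \approx e \approx s$, since $g \approx s$ with $g = ss^{-1}$ forces $g^{-1}s = ss^{-1}s = s \in E(S)^*$; this is exactly the non-obvious step, and your choice of the range idempotent $ss^{-1}$ as the third vertex of the chain is the standard device (one could equally use $e \approx s^{-1}$ and conclude via $s^{-1} \approx s$, but your version is cleaner). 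The converse correctly manufactures a nonzero common lower bound of $s$ and $v$ inside $[t]^*$ from the two meets and directedness, and Lemma \ref{lemma911}(ii) — the only place $0$-$E$-unitarity is needed — finishes it. One point worth making explicit, since the paper's definition of \emph{directed} is terse: you read ``$[s]^*$ is directed'' as requiring the lower bound to lie in the poset $[s]^*$ itself, hence to be nonzero. That is the intended (and necessary) reading; if lower bounds were allowed to be $0$, the condition would be vacuous and the stated equivalence false.
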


An inverse semigroup with zero $S$ is said \emph{categorical at zero} if $stu = 0$ implies that $st = 0$ or $tu = 0$, for $s,t,u \in S$.

\begin{lemma} \cite[Lemma 9.1.3]{lawson1998inverse} \label{lema74}
    Let $S$ be an inverse semigroup with zero categorical at zero. Then the set $[s]^*$ is directed for all $s \in S^*$.
\end{lemma}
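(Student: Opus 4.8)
The plan is to first unwind the statement. Recall that $[s]$ denotes the principal order ideal $\{t \in S : t \preceq s\}$, so that $[s]^* = \{t \in S^* : t \preceq s\}$, and that here ``directed'' means \emph{downward} directed: any two elements of $[s]^*$ admit a common lower bound, which automatically lies again in $[s]^*$ since an order ideal is downward closed. Thus it suffices to fix $s \in S^*$ together with two nonzero elements $t, t' \preceq s$ and to produce a \emph{nonzero} common lower bound. First I would write $t$ and $t'$ in their canonical restricted form below $s$: setting $e = s^{-1}t$ and $e' = s^{-1}t'$, one checks that $e$ and $e'$ are idempotents with $e, e' \preceq s^{-1}s$, that $t = se$ and $t' = se'$, and that $e, e' \neq 0$ (since $se = t \neq 0$ and $se' = t' \neq 0$).

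The crucial point, and the only place where categoricity at zero is used, is to show that $ee' \neq 0$. Writing $h = s^{-1}s$ and using $e, e' \preceq h$, I would observe that $e h e' = ee'$; if $ee'$ were zero then $ehe' = 0$, so by categoricity at zero either $eh = 0$ or $he' = 0$, that is $e = 0$ or $e' = 0$, contradicting $e, e' \neq 0$. Hence $ee' \neq 0$. With this in hand, the element $w = see'$ is the desired witness: it is a common lower bound of $t$ and $t'$ because $w = (se)e' = te' \preceq t$ and $w = (se')e = t'e \preceq t'$ (idempotents commute), and it is nonzero because $s^{-1}w = (s^{-1}s)(ee') = ee' \neq 0$ (as $ee' \preceq h$). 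Therefore $t$ and $t'$ have a nonzero common lower bound, which proves that $[s]^*$ is directed.

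The substance of the argument is the realization that categoricity at zero forbids \emph{orthogonal} nonzero idempotents beneath a common idempotent: it converts the three-term relation $ehe' = 0$ into the dichotomy $e = 0$ or $e' = 0$. Everything else, namely the factorizations $t = se$ and $t' = se'$ and the verification that $see'$ lies below both $t$ and $t'$ and is nonzero, is routine manipulation of the natural partial order together with the identity $s^{-1}s\,e = e$ for $e \preceq s^{-1}s$. I therefore expect the main (and essentially only) obstacle to be phrasing this categoricity-at-zero step correctly; once $ee'$ is known to be nonzero the rest is mechanical, and in particular one never needs to invoke the meet $t \wedge t'$ explicitly, only the existence of a single nonzero lower bound.
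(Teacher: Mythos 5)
Your proof is correct and complete. The paper offers no argument of its own for this lemma---it simply cites \cite[Lemma 9.1.3]{lawson1998inverse}---and your proof is essentially the standard one behind that citation: factor $t = se$ and $t' = se'$ with idempotents $e = s^{-1}t$, $e' = s^{-1}t' \preceq s^{-1}s$, use categoricity at zero on the product $e(s^{-1}s)e' = ee'$ to rule out $ee' = 0$, and exhibit $see'$ as the required nonzero common lower bound.
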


Let $S$ be an inverse semigroup with zero. We define the relation $\tau$ in $S$ by $s \tau t$ if $s = t = 0$ or $s,t \in S^*$ and there is an element $u \in S^*$ such that $u \preceq s,t$. The relation $\tau$ works as the minimum group congruence $\sigma$ for the case of inverse semigroups with zero.

\begin{prop} \label{tauapprox}$\tau = \; \approx$ in 0-$E$-unitary inverse semigroups categorical at zero. \end{prop}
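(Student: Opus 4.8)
The plan is to verify the set equality $\tau = \;\approx$ by splitting into the zero and nonzero cases dictated by both definitions, and then to observe that on nonzero pairs the two relations are literally the two equivalent conditions furnished by Lemma \ref{lemma911}.

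First I would dispose of the degenerate cases. By definition both $0 \tau 0$ and $0 \approx 0$ hold, so the pair $(0,0)$ lies in each relation. If exactly one of $s,t$ equals $0$, then neither the clause ``$s=t=0$'' nor the clause ``$s,t \in S^*$'' is satisfied in either definition, so $(s,t)$ belongs to neither $\tau$ nor $\approx$; the relations agree vacuously here. Hence it suffices to treat $s,t \in S^*$.

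For nonzero $s,t$ I would simply unfold the definitions. By construction $s \tau t$ means precisely that $s$ and $t$ admit a common nonzero lower bound, i.e.\ some $u \in S^*$ with $u \preceq s,t$. For the inclusion $\approx\,\subseteq\,\tau$, given $s \approx t$ Lemma \ref{lemma911}(i) produces exactly such a nonzero lower bound, whence $s \tau t$; this direction does not even use the $0$-$E$-unitary hypothesis. For the reverse inclusion $\tau\,\subseteq\,\approx$, if $s \tau t$ then $s,t$ possess a nonzero lower bound, and since $S$ is $0$-$E$-unitary, Lemma \ref{lemma911}(ii) translates this back into $s^{-1}t,\,st^{-1} \in E(S)^*$, that is, $s \approx t$. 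Combining the two inclusions with the zero cases gives $\tau = \;\approx$.

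Because the argument is a direct passage through Lemma \ref{lemma911}, there is no genuine obstacle in the equality itself; the only point requiring care is matching the zero-handling conventions of the two definitions so that the degenerate pairs are treated consistently. The categorical-at-zero hypothesis is what makes the conclusion useful rather than what the equality needs: by Lemma \ref{lema74} it forces each $[s]^*$ to be directed, and then Proposition \ref{prop73} guarantees that $\approx$, hence $\tau$, is transitive, so that the common relation is an equivalence relation fit to play the role of the minimum group congruence $\sigma$ in the zero setting.
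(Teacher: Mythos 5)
Your proof is correct, and it takes a genuinely more economical route than the paper's. The paper disposes of the proposition in one line by citing Proposition \ref{propcaracapprox}, Proposition \ref{prop73} and Lemma \ref{lema74}; that chain of references routes the inclusion $\tau \,\subseteq\, \approx$ through transitivity of $\approx$ (a nonzero lower bound $u \preceq s,t$ is strongly compatible with both $s$ and $t$, and transitivity then yields $s \approx t$), and it is precisely in establishing transitivity, via Lemma \ref{lema74} and Proposition \ref{prop73}, that the categorical-at-zero hypothesis is consumed, while $\approx \,\subseteq\, \tau$ follows from the nonzero meet produced in Proposition \ref{propcaracapprox}(i). You instead quote Lemma \ref{lemma911}(ii), which under $0$-$E$-unitarity alone already identifies ``has a nonzero lower bound'' with $\approx$ on $S^*$; after matching the zero-handling conventions of the two definitions, the set equality falls out with no appeal to transitivity at all. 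The payoff of your route is a sharper separation of hypotheses: the equality $\tau = \,\approx$ holds for every $0$-$E$-unitary inverse semigroup with zero, and categoricity at zero is needed only afterwards (through Proposition \ref{prop73} and Lemma \ref{lema74}) to guarantee that the common relation is transitive, hence an equivalence fit to play the role of the minimum group congruence $\sigma$ --- exactly your closing remark, and consistent with how the proposition is used in the remainder of Section 7. What the paper's route buys, by contrast, is uniformity with the meet-based toolkit it has just developed in Proposition \ref{propcaracapprox}, which is the form of the characterization reused elsewhere in that section.
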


\begin{proof}
    It is an immediate consequence of Propositions \ref{propcaracapprox}, \ref{prop73} and Lemma \ref{lema74}.
\end{proof}

A congruence is said \emph{0-restricted} if the class of 0 only contains 0. An inverse semigroup with zero $S$ is said \emph{primitive} if $\preceq$ is equality in $S^*$. A congruence is said \emph{primitive} if the quotient by this congruence is a primitive inverse semigroup.

\begin{prop} \cite[Proposition 9.1.4]{lawson1998inverse}
    Let $S$ be an inverse semigroup with zero categorical at zero. Then $\tau$ is the minimum primitive $0$-restricted congruence in $S$.
\end{prop}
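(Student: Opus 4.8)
The plan is to verify, in turn, that $\tau$ is a congruence, that it is $0$-restricted and primitive, and finally that it is contained in every other primitive $0$-restricted congruence. That $\tau$ is $0$-restricted is immediate from its definition (nothing but $0$ is $\tau$-related to $0$), and reflexivity and symmetry are clear; so the substance lies in transitivity, compatibility, primitivity of the quotient, and minimality.

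The heart of the argument, and where the hypothesis is used, is showing $\tau$ is a congruence. For transitivity, suppose $s\tau t$ and $t\tau r$ with all three nonzero, witnessed by nonzero $u\preceq s,t$ and $v\preceq t,r$. Both $u,v$ lie below $t$, say $u=te_1$ and $v=te_2$ with $e_1=u^{-1}u$, $e_2=v^{-1}v$ nonzero idempotents below $t^{-1}t$. Writing $e_1(t^{-1}t)e_2=e_1e_2$ and using that $S$ is categorical at zero, $e_1e_2=0$ would force $e_1=e_1(t^{-1}t)=0$ or $(t^{-1}t)e_2=e_2=0$, a contradiction; hence $w:=te_1e_2\neq 0$ is a common lower bound of $u$ and $v$, so $w\preceq s,r$ and $s\tau r$. (This is precisely the directedness supplied by Lemma \ref{lema74}.) For compatibility it suffices to treat one-sided multiplication. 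Given $s\tau t$ via nonzero $u\preceq s,t$ and any $a\in S$, we have $au\preceq as,at$. If $au\neq 0$ then $au$ witnesses $as\tau at$; if $au=0$, write $u=sf=tg$ with $f,g\in E(S)$ and apply categorical at zero to the triple product $a\cdot s\cdot f$: since $sf=u\neq 0$, we obtain $as=0$, and symmetrically $at=0$, so again $as\tau at$. The right-handed statement $sa\tau ta$ follows identically after writing $u=hs=kt$ with $h=k=uu^{-1}$, and combining the two yields $sa\tau tb$ whenever $s\tau t$ and $a\tau b$.

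Next I would show $S/\tau$ is primitive. Suppose $\overline{s}\preceq\overline{t}$ are nonzero in $S/\tau$; lifting idempotents along the quotient map we may write $\overline{s}=\overline{t}\,\overline{g}$ with $g\in E(S)$, that is, $s\tau tg$. Since $\tau$ is $0$-restricted and $s\neq 0$, also $tg\neq 0$; as $tg\preceq t$ with $tg$ nonzero, $tg$ itself witnesses $tg\tau t$, whence $s\tau t$ and $\overline{s}=\overline{t}$. Thus $\preceq$ is trivial on nonzero classes and $S/\tau$ is primitive.

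Finally, for minimality let $\rho$ be any primitive $0$-restricted congruence and suppose $s\tau t$ with $s,t\neq 0$, witnessed by nonzero $u\preceq s,t$. Then $\overline{u}\preceq\overline{s}$ and $\overline{u}\preceq\overline{t}$ in $S/\rho$, and all three classes are nonzero because $\rho$ is $0$-restricted; primitivity of $S/\rho$ forces $\overline{u}=\overline{s}$ and $\overline{u}=\overline{t}$, i.e. $s\,\rho\,u\,\rho\,t$, so $(s,t)\in\rho$. Hence $\tau\subseteq\rho$. I expect the main obstacle to be the congruence verification: the delicate points are ensuring that a nonzero meet survives (transitivity) and that zeros propagate correctly under multiplication (compatibility), both of which rely essentially on the categorical-at-zero hypothesis through the device of inserting the appropriate idempotent into a triple product. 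By contrast, primitivity of the quotient and minimality are then formal consequences of the definition of $\tau$ together with $0$-restrictedness.
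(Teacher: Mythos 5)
Your proof is correct. Note that the paper gives no argument of its own here: the proposition is quoted verbatim from Lawson's book (Proposition 9.1.4), so there is no internal proof to compare against; your write-up is a complete and accurate reconstruction of the standard argument, using categoricity at zero exactly where it is needed — to produce the nonzero common lower bound $te_1e_2$ for transitivity (the directedness of Lemma \ref{lema74}) and to propagate zeros under one-sided multiplication for compatibility — with primitivity of the quotient and minimality then following formally from $0$-restrictedness, as you say.
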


Next, we will show that the partial actions of primitive inverse semigroups are intrinsically related to orthogonal partial actions of groupoids. Before that, we shall recall some definitions regarding groupoids.

Consider a set $\G$ equipped with a partially defined operation denoted by concatenation. We write $\G_2 = \{ (g,h) \in \G \times \G : \text{the product } gh \text{ is defined}\}$. We say that $e \in \G$ is an \emph{identity} if $(e,e) \in \G_2$, $e^2 = e$ and if $(g,e) \in \G_2$ (resp. $(e,g) \in \G_2$) then $ge = g$ (resp. $eg = g$). We denote by $\G_0$ the set of identities of $\G$. We say that $\G$ is a \emph{groupoid} if the following conditions hold:
\begin{itemize}
\item [(i)] for all $g, h, l \in \G$, $(g,hl) \in \G_2$ if and only if $(gh,l) \in \G_2$ and in this case $(gh)l = g(hl)$;
    \item[(ii)] for all $g, h, l \in \G$, $(g,hl) \in \G_2$  if and only if $(g,h) \in \G_2$ and $(h,l) \in \G_2$;
        \item[(iii)] for each $g \in \G$, there exist (unique) identities $d(g), r(g) \in \G_0$ such that $(g,d(g)) \in \G_2$ and $(r(g),g) \in \G_2$;
            \item[(iv)] for each $g \in \G$ there exists $g^{-1} \in \G$ such that $(g,g^{-1}),(g^{-1},g) \in \G_2$, $d(g) = g^{-1}g$ and $r(g) = gg^{-1}$. \end{itemize}

Every groupoid gives rise to a primitive inverse semigroup and vice versa. The structures are defined in the following way. Let $\G$ be a groupoid. Defining $\G^0 = \G \cup \{ 0 \}$ with product $g0 = 0 = 0g$, for all $g \in \G^0$, and $gh = 0$ if $\nexists gh$, we have that $\G^0$ is a primitive inverse semigroup. In fact, by \cite[Theorem 3.3.4]{lawson1998inverse}, every primitive inverse semigroup is obtained in this way. Conversely, let $S$ be  a primitive inverse semigroup. Place in the set $S^*$ a groupoid structure  given by $\exists s \cdot t$ if and only if $st \neq 0$, and in this case $s \cdot t = st$.

Our next result requires the definitions of partial actions for inverse semigroups and groupoids. We adopt the definition of partial inverse semigroup action as stated in \cite[Definition 2.1]{beuter2019simplicity} with a slight weakening of the original assumptions. Here, we assume that $A_s \triangleleft A_{ss^{-1}}$ instead of $A_s \triangleleft A$, for each $s \in S$.

\begin{defi}\label{def-par-semi}
We define $\alpha = (A_s,\alpha_s)_{s \in S}$ as a partial action of an inverse semigroup $S$ on a ring $A$ if:

1. $A_{ss^{-1}} \triangleleft A$, $A_s \triangleleft A_{ss^{-1}}$, and $\alpha_s : A_{s^{-1}} \to A_s$ is an isomorphism  of rings, for all $s \in S$;

2. The following conditions hold:\begin{itemize}
    \item[(PIS1)] $A = \sum_{e \in E(S)} A_e$;
    
    \item[(PIS2)] $\alpha_{t}^{-1}(A_{s^{-1}} \cap A_t) \subseteq A_{(st)^{-1}}$, for all $s,t \in S$;
    
    \item[(PIS3)] $\alpha_s \circ \alpha_t(x) = \alpha_{st}(x)$, for all $s,t \in S, x \in \alpha_t^{-1}(A_{s^{-1}}\cap A_t)$.
\end{itemize}
\end{defi}  

Next we introduce the notion of a partial inverse semigroup with zero action.

\begin{defi}\label{def-par-semi-zero}Let $S$ be an inverse semigroup with zero and $\alpha = (A_s,\alpha_s)_{s \in S}$ a partial action of $S$ on $A$. We say that $\alpha$ is a \emph{partial inverse semigroup with zero action} if:
\begin{enumerate}
    \item[(PIS0)] $A_0 = 0$ and $\alpha_0 = 0$.
\end{enumerate}\end{defi}

A \emph{unital partial action} of $S$ on $A$ is a partial action $\alpha$ of $S$ on $A$ such that every ideal $A_s$, for $s \in S$, is generated by a central idempotent $1_s$ of $A$.

\begin{defi}\label{par-groupoid}
Let $\G$ be a groupoid and $A$ be a ring. We define $\alpha = (A_g,\alpha_g)_{g \in \G}$ as a \emph{partial groupoid action} of $\G$ on $A$ if:

1.$A_{r(g)} \triangleleft A$, $A_g \triangleleft A_{r(g)}$, and $\alpha_g : A_{g^{-1}} \to A_g$ is an isomorphism of rings, for all $g \in \G$;

2. The following conditions hold:\begin{itemize}
    \item[(PGr1)] $A = \sum_{e \in \G_0} A_e$;
    \item[(PGr2)] $\alpha_{h}^{-1}(A_{g^{-1}} \cap A_h) \subseteq A_{(gh)^{-1}}$, for all $(g,h) \in \G_2$;
    \item[(PGr3)] $\alpha_g \circ \alpha_h(a) = \alpha_{gh}(a)$, for all $(g,h) \in \G_2, a \in \alpha_h^{-1}(A_{g^{-1}}\cap A_h)$.
\end{itemize}\end{defi}

\begin{defi} We say that a partial action $\alpha$ is an \emph{orthogonal} partial groupoid action if holds:
\begin{enumerate}
    \item[(PGr0)] $A = \displaystyle\bigoplus_{e \in \G_0} A_e$.
\end{enumerate}
\end{defi}

A \emph{unital partial action} of $\G$ on $A$ is a partial action $\alpha$ such that every $A_g$, for $g \in \G$, is generated by a central idempotent $1_g$ of $A$.

\begin{obs} We highlight two facts concerning the definition of partial action of a groupoid. \begin{itemize}
\item[1.] The concept of a partial groupoid action was first defined by Bagio and Paques in \cite{bagio2012partial}, although the authors did not require the condition $A = \sum_{e \in \G_0} A_e$. We incorporate it in the definition in order to recover the concept of partial group actions on rings. Indeed, if $G$ is a group with identity element $1_G$ acting partially on a ring $A$ via $\alpha$, then $A_{1_G} = A$ and $\alpha_{1_G} = \text{Id}_A$. 
\item[2.] We omit $\alpha_e = \text{Id}_{A_e}$, for all $e \in \G_0$ from Definition \ref{par-groupoid}, because it is redundant. Indeed, let $e \in \G_0$. Then $e^{-1} = e$ and $\alpha_e^{-1}(A_{e^{-1}}\cap A_e) = A_e$. Given any $x \in A_e$, there exists $y \in A_e$ such that $\alpha_e(y)=x$. Thus, using  (PGr3), it follows that $x = \alpha_e(y) = \alpha_{ee}(y) = \alpha_e \circ \alpha_e(y) = \alpha_e(\alpha_e(y)) = \alpha_e(x)$, concluding that $\alpha_e = \text{Id}_{A_e}$.
\end{itemize}
\end{obs}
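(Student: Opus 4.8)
The plan is to treat the two assertions separately, establishing the second one first since the first one invokes it.

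For the second fact I would begin by showing that every identity is its own inverse. From axiom (iv) of the groupoid definition we have $d(e) = e^{-1}e$ and $r(e) = ee^{-1}$, while for $e \in \G_0$ one has $d(e) = r(e) = e$; uniqueness of the inverse then forces $e^{-1} = e$. Hence $\alpha_e$ is an isomorphism from $A_{e^{-1}} = A_e$ onto $A_e$, and the domain of its self-composition is $\alpha_e^{-1}(A_{e^{-1}} \cap A_e) = \alpha_e^{-1}(A_e) = A_e$, the whole ideal, because $\alpha_e$ is surjective onto $A_e$. Since $(e,e) \in \G_2$ is built into the very definition of an identity (with $e^2 = e$), axiom (PGr3) applies and yields $\alpha_e \circ \alpha_e = \alpha_{ee} = \alpha_e$ on all of $A_e$. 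The decisive step is then to exploit surjectivity: given $x \in A_e$, choose $y \in A_e$ with $\alpha_e(y) = x$, so that $\alpha_e(x) = \alpha_e(\alpha_e(y)) = \alpha_e(y) = x$. This gives $\alpha_e = \text{Id}_{A_e}$ for every $e \in \G_0$, proving that the omitted axiom is a consequence of (PGr3) and therefore redundant.

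For the first fact I would specialize the groupoid $\G$ to a group $G$, for which $\G_0 = \{1_G\}$ is a singleton. Then the incorporated condition (PGr1), namely $A = \sum_{e \in \G_0} A_e$, collapses to $A = A_{1_G}$, i.e. $A_{1_G} = A$, which is the first half of (P1) in Definition \ref{pagroup}. Applying the second fact to $e = 1_G$ gives $\alpha_{1_G} = \text{Id}_{A_{1_G}} = \text{Id}_A$, the second half of (P1). Finally I would observe that in a group the membership $(g,h) \in \G_2$ is automatic, so (PGr2) and (PGr3) reduce verbatim to (P2) and (P3) of Definition \ref{pagroup}. Thus a partial groupoid action in the sense of Definition \ref{par-groupoid} that satisfies the summation condition is precisely a partial group action in the sense of Definition \ref{pagroup}, which is exactly the sense in which the amended definition ``recovers'' partial group actions.

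The substantive part, and the step I would be most careful with, is the second fact: one must derive $e^{-1} = e$ from the groupoid axioms rather than assume it, and one must verify that the domain $\alpha_e^{-1}(A_{e^{-1}} \cap A_e)$ genuinely equals all of $A_e$, so that the conclusion $\alpha_e = \text{Id}_{A_e}$ holds on the entire ideal and not merely on a subideal. Once this is in place, the first fact is a routine specialization that introduces no new ideas, amounting only to rewriting the groupoid axioms in the degenerate case $\G = G$.
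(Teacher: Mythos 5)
Your proof is correct and takes essentially the same route as the paper: for the second fact you use exactly the paper's steps, namely $e^{-1}=e$, the identification $\alpha_e^{-1}(A_{e^{-1}}\cap A_e)=A_e$, and the chain $x=\alpha_e(y)=\alpha_{ee}(y)=\alpha_e(\alpha_e(y))=\alpha_e(x)$ via (PGr3), and the first fact is the intended routine specialization to $\G_0=\{1_G\}$, where (PGr1) collapses to $A_{1_G}=A$ and the second fact gives $\alpha_{1_G}=\mathrm{Id}_A$. Your only additions (deriving $e^{-1}=e$ from uniqueness of inverses and invoking surjectivity of $\alpha_e$ to justify the choice of $y$) are details the paper leaves implicit.
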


Next, we establish a connection between orthogonal partial groupoid actions and partial inverse semigroup with zero actions.

\begin{theorem} \label{corrprimgrport}
    Let $\alpha = (A_s,\alpha_s)_{s \in S}$ be a partial inverse semigroup with zero action of a primitive inverse semigroup $S$ on a commutative ring $A$. Then $\alpha^* = (A_s',\alpha_s')_{s \in S^*}$ is an orthogonal partial groupoid action of the groupoid $S^*$ on the ring $A$, where $A_s' = A_s$ and $\alpha_s' = \alpha_s$, for all $s \in S^*$.

    Conversely, let $\gamma = (A_g,\gamma_g)_{g \in \G}$ be an orthogonal partial groupoid action of the groupoid $\G$ on a commutative ring $A$. Then $\gamma^0 = (A_g',\gamma_g')_{g \in \G^0}$ is a partial inverse semigroup with zero action of the primitive inverse semigroup $\G^0$ on $A$, where $A_g' = A_g$, $\gamma_g' = \gamma_g$, for all $g \in \G$, $A_0' = 0$ and $\gamma_0' = 0$.

    Furthermore, $(\gamma^0)^* = \gamma$ and $(\alpha^*)^0 = \alpha$.
\end{theorem}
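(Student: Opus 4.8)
The plan is to treat the two constructions as one and the same data read across the bijection between primitive inverse semigroups and groupoids recalled before the statement: for primitive $S$ the non-zero part $S^*$ is a groupoid with $s\cdot t = st$ whenever $st\neq 0$, whose identities are exactly $E(S)^* = E(S)\setminus\{0\}$; conversely one forms $\G^0 = \G\cup\{0\}$. Since $\alpha^*$ and $\gamma^0$ retain the very same ideals and the same isomorphisms on the non-zero/non-adjoined elements, the object-wise requirement (condition~1 of Definitions \ref{def-par-semi} and \ref{par-groupoid}) transfers verbatim: for $s\in S^*$ one has $r(s)=ss^{-1}$, so $A_s\triangleleft A_{ss^{-1}} = A_{r(s)}\triangleleft A$ reads identically in both frameworks. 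Throughout I would use the standard fact, proved exactly as in the Remark following Definition \ref{par-groupoid} but via (PIS3), that $\alpha_e = \mathrm{id}_{A_e}$ for every idempotent $e$.

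For the forward direction I would verify (PGr1), (PGr2), (PGr3), (PGr0). The first three are immediate: (PGr1) is (PIS1) after discarding the vanishing summand $A_0=0$, and since a composable pair $(s,t)\in (S^*)_2$ is precisely a pair with $st\neq 0$, the groupoid product coincides with $st$, so (PGr2) and (PGr3) are the corresponding instances of (PIS2) and (PIS3). The substantive point is orthogonality (PGr0), and here I would use primitivity: if $e,f\in E(S)^*$ are distinct, then $ef\preceq e$ and $ef\preceq f$, and since $\preceq$ is equality on $S^*$ the element $ef$ cannot lie in $S^*$, forcing $ef=0$. Applying (PIS2) with $s=e$, $t=f$ and $\alpha_f=\mathrm{id}$ gives $A_e\cap A_f\subseteq A_{(ef)^{-1}} = A_0 = 0$ for distinct $e,f$ (equivalently, in the unital case, the defining idempotents $1_e$ are pairwise orthogonal central idempotents), whence $A=\bigoplus_{e\in E(S)^*}A_e$, i.e.\ (PGr0).

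For the converse I would check (PIS0) (true by construction), (PIS1) (from (PGr1), using $E(\G^0)=\G_0\cup\{0\}$ and $A_0=0$), and then (PIS2)--(PIS3) for \emph{all} $s,t\in\G^0$. This is where the main difficulty lies: (PGr2)--(PGr3) are assumed only for composable pairs, while (PIS2)--(PIS3) must hold for every pair, in particular for the non-composable ones (where $st=0$ in $\G^0$) and those involving the adjoined zero. Pairs with a zero factor are trivial, since $A_0=0$ and $\alpha_0=0$. For a non-composable pair $s,t\in\G$, non-composability means $d(s)=s^{-1}s\neq tt^{-1}=r(t)$, hence $st=0$ and $A_{(st)^{-1}}=A_0=0$; the inclusion (PIS2) then demands $A_{s^{-1}}\cap A_t=0$. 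This is exactly the point at which orthogonality of $\gamma$ is used: from $A_{s^{-1}}\triangleleft A_{r(s^{-1})}=A_{s^{-1}s}$ and $A_t\triangleleft A_{r(t)}=A_{tt^{-1}}$ one gets $A_{s^{-1}}\cap A_t\subseteq A_{s^{-1}s}\cap A_{tt^{-1}}=0$ by (PGr0), and then (PIS3) holds vacuously since its domain $\alpha_t^{-1}(A_{s^{-1}}\cap A_t)$ is $0$. Thus the whole translation is governed by the equivalence ``holds for all pairs (semigroup side) $\leftrightarrow$ holds for composable pairs together with orthogonality (groupoid side)'', which I regard as the conceptual heart of the theorem.

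Finally, the identities $(\gamma^0)^*=\gamma$ and $(\alpha^*)^0=\alpha$ are formal. Both operations leave the ideals and isomorphisms attached to the non-zero (resp.\ non-identity-adjoined) elements untouched, and the underlying assignments $\G\mapsto\G^0\mapsto(\G^0)^*$ and $S\mapsto S^*\mapsto(S^*)^0$ are mutually inverse by \cite[Theorem 3.3.4]{lawson1998inverse}, since every primitive inverse semigroup is of the form $\G^0$. Adjoining and then deleting the zero returns the original family, with $A_0=0$ and $\alpha_0=0$ matching (PIS0), so both composites are the identity.
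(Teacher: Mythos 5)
Your proposal is correct and follows essentially the same route as the paper's proof: verify (PGr1)--(PGr3) by transferring (PIS1)--(PIS3) across composable pairs, use primitivity of $S$ to force $ef=0$ for distinct $e,f\in E(S)^*$ and hence orthogonality (PGr0), handle the non-composable pairs in the converse via (PGr0) exactly as you do (through $A_{s^{-1}}\cap A_t\subseteq A_{d(s)}\cap A_{r(t)}=0$), and treat the identities $(\gamma^0)^*=\gamma$, $(\alpha^*)^0=\alpha$ as formal. The only cosmetic difference is that you derive $A_e\cap A_f=0$ from (PIS2) applied to the pair $(e,f)$, whereas the paper asserts $A_e\cap A_f=A_{e\wedge f}=A_0$ outright; both arguments rest on the same use of primitivity, and yours makes the needed inclusion slightly more explicit.
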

\begin{proof}
    For the first part, it is easy to see that $\alpha^*$ satisfies (PGr2) and (PGr3) for all $(s,t) \in (S^*)_2$. Now, since we have that $\alpha_e' = \alpha_e = \text{Id}_{A_e}$, for all $e \in E(S)^* = (S^*)_0$, if we prove that $\alpha^*$ satisfies (PGr0) we also guarantee that $\alpha^*$ satisfies (PGr1). For that, let $e, f \in E(S^*)$. Then $A_e \cap A_f = A_{e \wedge f}$. But $e \wedge f = 0$, since $S$ is primitive. Thus $A_e' \cap A_f' = A_e \cap A_f = A_0 = 0$, for all $e,f \in E(S^*)$ such that $e \neq f$. Since $A = \sum_{e \in E(S)} A_e$, we have that $A = \bigoplus_{e \in E(S)^*} A_e'$.

    Conversely, let us prove that $\gamma^0$ is a partial action of $\G^0$ on $A$. Clearly (PIS1) holds, as
    \begin{align*}
        A = \sum_{e \in \G_0} A_e = \sum_{e \in \G_0} A_e' + 0 = \sum_{e \in \G_0} A_e' + A_0' = \sum_{e \in E(\G^0)} A_e'
    \end{align*}
    and $\gamma_e' = \gamma_e = \text{Id}_{A_e} = \text{Id}_{A_e'}$, for all $e \in E(\G^0)^*$, and $\gamma_0' = 0 = \text{Id}_{A_0'}$.

    Let $g,h \in \G^0$. If $g = 0$, we obtain $\gamma_{h^{-1}}'(A_{g^{-1}}' \cap A_h') = 0 \subseteq A_{(gh)^{-1}}$ and $\gamma_g' \circ \gamma_h'(0) = 0 \gamma_{gh} '(0)$. If $h = 0$, the same affirmations are valid. Assume that $g,h \neq 0$. If $(g,h) \in \G_2$, then the pair $(g,h)$ already satisfies (PIS2) and (PIS3). If $(g,h) \notin \G_2$, then $gh = 0$ in $\G^0$. In this case, $\gamma_{h^{-1}}'(A_{g^{-1}}' \cap A_h') \subseteq \gamma_{h^{-1}}'(A_{d(g)}' \cap A_{r(h)}') = 0 \subseteq A_{(gh)^{-1}}'$ and $\gamma_g' \circ \gamma_h'(0) = 0 = \gamma_{gh}'(0)$.

    The proof of the last assertion is now immediate.
\end{proof}

Assume that $S$ is a 0-$E$-unitary, categorical at zero inverse subsemigroup of $\text{Iso}_{pu}(A)$ for some ring $A$. Let $f \in S$. We know that the $\tau$-class of $f$ coincides with the $\approx$-class of $f$ by Proposition \ref{tauapprox}. In particular, the elements of $\tau(f)$ are strongly compatible. Once $\text{Iso}_{pu}(A)$ is $f$-complete, we can take the element $\alpha_f = \sum_{g \in \tau(f)} g \in \text{Iso}_{pu}(A)$. Let $P' = \{ \alpha_f : f \in S\}$.

\begin{lemma} \label{lemasemzeroparc1}
    On the above notations,

    \begin{enumerate}
        \item[(i)] For every pair $\alpha, \beta \in P'$ such that $\alpha \beta \neq 0$, there is an unique $0 \neq \gamma \in P'$ such that $\alpha \beta \leq \gamma$.

        \item[(ii)] $\alpha_e$ is an idempotent when $e \in E(S)$.

        \item[(iii)] $\alpha_0 = 0$.

        \item[(iv)] If $\alpha \in P'$, then $\alpha^{-1} \in P'$.
    \end{enumerate}
\end{lemma}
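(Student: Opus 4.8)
The plan is to mirror the proof of Lemma~\ref{lemaparceunit1}, replacing the minimum group congruence $\sigma$ by the congruence $\tau$ and using that $\tau = \approx$ on $S$ (Proposition~\ref{tauapprox}), together with the fact that $\tau$ is a $0$-restricted congruence, so that $\tau(0) = \{0\}$ and $\tau(s)\tau(t) = \tau(st)$ for all $s,t \in S$. Recall that in $\text{Iso}_{pu}(A)$ the natural partial order is function restriction, and that this semigroup is $f$-complete with $\left(\sum_i f_i\right)^{-1} = \sum_i f_i^{-1}$ for compatible families.

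I would dispose of (iii), (iv), (ii) first. For (iii), $0$-restrictedness gives $\tau(0) = \{0\}$, whence $\alpha_0 = \sum_{g \in \{0\}} g = 0$. For (iv), since $\tau$ is a congruence on an inverse semigroup it respects inverses, so $\tau(f^{-1}) = \{g^{-1} : g \in \tau(f)\}$; taking inverses of the compatible join yields $\alpha_f^{-1} = \sum_{g \in \tau(f)} g^{-1} = \alpha_{f^{-1}} \in P'$. For (ii), I would show $\tau(e) \subseteq E(S)$ when $e \in E(S)$: if $g \approx e$ with $g,e \neq 0$, then $g \wedge e$ is a nonzero idempotent below $g$ (Proposition~\ref{propcaracapprox}(i) together with the fact that $E(S)$ is an order ideal), so $0$-$E$-unitarity forces $g \in E(S)$. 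As each such $g$ is a partial identity, the join $\alpha_e$ is again a partial identity, i.e. an idempotent of $\text{Iso}_{pu}(A)$ (the case $e=0$ being (iii)).

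The substance is (i). For existence, with $\alpha = \alpha_f$, $\beta = \alpha_g$ I would take $\gamma = \alpha_{fg}$. Exactly the computation of Lemma~\ref{lemaparceunit1}(i), now using $\tau(fg)=\tau(f)\tau(g)$, shows $\text{dom}(\alpha_f\alpha_g) \subseteq \text{dom}(\alpha_{fg})$ with the two maps agreeing there, i.e. $\alpha_f\alpha_g \le \alpha_{fg}$; and since a restriction of the zero map is zero, the hypothesis $\alpha_f\alpha_g \neq 0$ forces $\gamma = \alpha_{fg} \neq 0$. For uniqueness, suppose $\alpha_f\alpha_g \le \alpha_h$ with $\alpha_h \neq 0$; I must prove $\alpha_h = \alpha_{fg}$, equivalently $fg\,\tau\,h$. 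First, because $\alpha_f\alpha_g \neq 0$ its domain meets the relevant image nontrivially, so there are representatives $f' \in \tau(f)$, $g' \in \tau(g)$ with $f'g' \neq 0$; replacing $f,g$ by $f',g'$ (which alters none of $\alpha_f,\alpha_g,\alpha_{fg}$, since $f'g' \in \tau(f)\tau(g)=\tau(fg)$) I may assume $fg \neq 0$, and similarly $\alpha_h \neq 0$ gives $h \neq 0$.

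Then, since $f \le \alpha_f$ and $g \le \alpha_g$, we get $fg \le \alpha_f\alpha_g \le \alpha_h$, while $h \le \alpha_h$. Writing $\text{dom}(fg) = A_{(fg)^{-1}}$ as the sum of its intersections with the unital ideals $A_{k^{-1}}$, $k \in \tau(h)$, nonvanishing of $A_{(fg)^{-1}}$ yields some $k \in \tau(h)$ with $A_{(fg)^{-1}} \cap A_{k^{-1}} \neq 0$; on this ideal $fg$ and $k$ both restrict $\alpha_h$, hence agree, producing a nonzero common lower bound of $fg$ and $k$. By $0$-$E$-unitarity (Proposition~\ref{propcaracapprox}(ii), cf. Lemma~\ref{lemma911}) this gives $fg \approx k$, and since $k \approx h$ and $\approx = \tau$ is transitive (Proposition~\ref{prop73}), we conclude $fg \approx h$, i.e. $\tau(fg) = \tau(h)$ and $\alpha_{fg} = \alpha_h$. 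The main obstacle is precisely this uniqueness step: one cannot invoke the group-congruence argument directly, and the delicate point is to extract from $fg \le \alpha_h$ a single class element $k$ sharing a \emph{nonzero} lower bound with $fg$, which is exactly where $0$-$E$-unitarity and the transitivity of $\approx$ (guaranteed by categoricity at zero) enter.
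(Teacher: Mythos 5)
Your proof is correct and follows essentially the same route as the paper: the existence part of (i) is handled by defining $\gamma = \alpha_{fg}$ and transporting the computation of Lemma~\ref{lemaparceunit1}, while (iii) follows from $\tau(0) = \{0\}$. The only difference is one of completeness: the paper dismisses the remaining steps as ``analogous'' to Lemma~\ref{lemaparceunit1}, whereas your uniqueness argument --- extracting a representative $k \in \tau(h)$ whose domain meets that of $fg$ nontrivially, then invoking $0$-$E$-unitarity and the transitivity of $\approx$ (via categoricity at zero) --- explicitly supplies the step where the group-congruence argument of the zero-free case does not carry over verbatim, since a common upper bound alone yields only weak, not strong, compatibility.
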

\begin{proof}
    (i): By hypothesis, we have $\alpha = \alpha_f$ and $\beta = \alpha_g$, for $f,g \in S$. Since $\alpha \circ \beta \neq 0$, it follows that $f,g \neq 0$. Define $\gamma = \alpha_{fg}$. 
    
    The rest of the proof of (i) and the proof of (ii) and (iv) is now analogous to Lemma \ref{lemaparceunit1}.

    (iii): Clearly $\tau(0) = \{0\}$, from where $\alpha_0 = \sum_{f \in \tau(0)} f = 0$.
\end{proof}

Define the binary operation $\astrosun$ on $P'$ by $\alpha \astrosun \beta = \gamma$, if $\alpha \beta \neq 0$, where $\gamma$ is the unique element of $P'$ that is greater than or equal to $\alpha \circ \beta$, as specified in the previous lemma; and if $\alpha \beta = 0$, define $\alpha \astrosun \beta = 0$.

\begin{prop} \label{lemasemzeroparc2}
    $(P',\astrosun)$ is a primitive inverse semigroup isomorphic to $S/\tau$.
\end{prop}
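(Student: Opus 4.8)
The plan is to exhibit an explicit isomorphism onto $S/\tau$ and then transport the algebraic structure. Since $\tau$ is the minimum primitive $0$-restricted congruence on $S$, the quotient $S/\tau$ is a primitive inverse semigroup. I would define $\Phi : P' \to S/\tau$ by $\Phi(\alpha_f) = \tau(f)$ and show that $\Phi$ is a well-defined bijection satisfying $\Phi(\alpha \astrosun \beta) = \Phi(\alpha)\Phi(\beta)$. Once this is established, the associativity of $\astrosun$, the existence of inverses and idempotents, and primitivity are all inherited from $S/\tau$ through $\Phi$, so that $(P',\astrosun)$ is a primitive inverse semigroup isomorphic to $S/\tau$.

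For the homomorphism property, the key identity is $\alpha_f \astrosun \alpha_g = \alpha_{fg}$ for all $f,g \in S$. When $\alpha_f \alpha_g \neq 0$ this is exactly the content of Lemma \ref{lemasemzeroparc1}(i), whose proof produces $\gamma = \alpha_{fg}$. When $\alpha_f \alpha_g = 0$ I would argue $\alpha_f \astrosun \alpha_g = 0 = \alpha_{fg}$: since $f \preceq \alpha_f$ and $g \preceq \alpha_g$, the compatibility of the natural partial order with multiplication gives $fg \preceq \alpha_f\alpha_g = 0$, and any element below $0$ equals $0$, so $fg = 0$ and hence $\alpha_{fg} = \alpha_0 = 0$ by Lemma \ref{lemasemzeroparc1}(iii). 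In both cases $\Phi(\alpha_f \astrosun \alpha_g) = \tau(fg) = \tau(f)\tau(g) = \Phi(\alpha_f)\Phi(\alpha_g)$, because $\tau$ is a congruence.

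Surjectivity of $\Phi$ is clear, and well-definedness together with injectivity reduce to the claim $\alpha_f = \alpha_g \iff \tau(f) = \tau(g)$. The implication $\Leftarrow$ is immediate from the definition $\alpha_f = \sum_{h \in \tau(f)} h$. The converse is the main obstacle. I would first dispose of the trivial case ($\alpha_f = 0 \iff f = 0 \iff \tau(f) = \{0\}$) and then assume $\alpha_f = \alpha_g \neq 0$. Given $h \in \tau(f)$, one has $h \preceq \alpha_f = \alpha_g = \sum_{k \in \tau(g)} k$, so $\text{dom}(h)$ is a nonzero unital ideal contained in $\sum_{k} \text{dom}(k)$; a short computation with the generating central idempotents shows that $\text{dom}(h)$ must meet some $\text{dom}(k)$ nontrivially. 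On that overlap $h$ and $k$ both restrict $\alpha_g$, hence agree, which forces $h^{-1}k$ and $hk^{-1}$ to be nonzero idempotents of $S$; thus $h \approx k$, and since $\tau = {\approx}$ by Proposition \ref{tauapprox}, we conclude $\tau(f) = \tau(h) = \tau(k) = \tau(g)$.

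Finally, since $\Phi$ is a bijection with $\Phi(\alpha \astrosun \beta) = \Phi(\alpha)\Phi(\beta)$ and the target $S/\tau$ is a semigroup, the associativity of $\astrosun$ follows by pulling back through $\Phi$, and $\Phi$ becomes an isomorphism of semigroups; being an isomorphism, it carries the inverse-semigroup structure and the primitivity of $S/\tau$ back to $P'$. I expect the domain-overlap argument establishing injectivity to be the only delicate point, as it is precisely where the hypotheses that $S$ is $0$-$E$-unitary and categorical at zero enter, via $\tau = {\approx}$ and the meet description of Proposition \ref{propcaracapprox}.
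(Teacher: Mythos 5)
Your proposal is correct and takes essentially the same approach as the paper: the paper's own proof consists exactly of the identity $\alpha_f \astrosun \alpha_g = \alpha_{fg}$ (from Lemma \ref{lemasemzeroparc1}) plus the bijection $\tau(f) \mapsto \alpha_f$ between $S/\tau$ and $P'$, through which the primitive inverse semigroup structure is transported. The only difference is that you carry out the verifications the paper dismisses as ``straightforward to check'' --- namely the zero case of the product identity and the delicate implication $\alpha_f = \alpha_g \Rightarrow \tau(f) = \tau(g)$, which you correctly reduce, via the domain-overlap argument and Proposition \ref{tauapprox}, to exhibiting nonzero idempotents $h^{-1}k$, $hk^{-1}$ in $S$.
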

\begin{proof}
    By Lemma \ref{lemasemzeroparc1}, we have that  $\alpha_f \astrosun \alpha_g = \alpha_{fg}$. To prove that $(P',\astrosun)$ is a primitive inverse semigroup, we only need to show that there is a bijection from $S/\tau$ to $P'$ that preserves binary operations. This is straightforward to check with the map $\tau(f) \mapsto \alpha_f$, which completes the proof.
\end{proof}

With arguments very similar to those used in the previous sections, we can see that there is a global-partial relation between inverse semigroup with zero actions and partial primitive inverse semigroup actions. Hence, by Theorem \ref{corrprimgrport}, also with partial orthogonal groupoid actions. This relationship allows us to define an invariant trace map for global actions of inverse semigroups with zero, constructed similarly to Section 3. Once we have a well defined trace map, we can translate the Galois extensions equivalence theorem from partial orthogonal groupoid actions \cite[Theorem 5.3]{bagio2012partial} to the case of global $0$-$E$-unitary, categorical at zero inverse semigroup actions. Following similar steps from Sections 5 and 6, with minimal adjustments, we achieve a Galois correspondence for finite inverse semigroups categorical at zero. This paper concludes by presenting this theorem. Thus, all the theory developed so far remains applicable in this setting.

\begin{theorem}
    Let $S$ be a finite categorical at zero inverse semigroup that acts on a commutative ring $A$ via an inverse semigroup with zero unital action $\beta = (A_s,\beta_s)_{s \in S}$. Assume that $A$ is $\beta$-Galois over $A^\beta$ and that $A_s \neq 0$, for all $s \in S^*$. Then there is a one-to-one correspondence between the $\beta$-maximal inverse subsemigroups $T$ of $S$ and the $\beta$-strong, separable $A^\beta$-subalgebras of $A$ given by $B \mapsto \beta^{-1}(\beta(S)_B)$ with inverse $T \mapsto A^{\beta|_{T}}$.
\end{theorem}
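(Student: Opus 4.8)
The plan is to reduce the statement to the injective, $0$-$E$-unitary, categorical-at-zero case, which (as the paragraph preceding the theorem indicates) is handled by the analogues of Sections 3--6, built from the relation $\tau$, Proposition \ref{lemasemzeroparc2} and Theorem \ref{corrprimgrport}. The reduction proceeds by replacing $S$ with its image $\beta(S)\subseteq\text{Iso}_{pu}(A)$. First I would record that $A$ is $\beta$-Galois over $A^\beta$ if and only if $A$ is $\beta'$-Galois over $A^{\beta'}$, where $\beta'$ denotes the induced (now injective) action of $\beta(S)$ on $A$, and that $A^\beta = A^{\beta'}$; both are routine since $\beta$ factors through $\beta(S)$ and sends $0$ to $0$.

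Next I would verify that $\beta(S)$ inherits the two structural hypotheses. For categoricity at zero: if $\beta_s\beta_t\beta_u = \beta_{stu} = 0$, then $A_{stu}=0$, and since $A_r\neq 0$ for every $r\in S^*$ this forces $stu = 0$ in $S$; categoricity of $S$ then gives $st=0$ or $tu=0$, hence $\beta_{st}=0$ or $\beta_{tu}=0$. For the $0$-$E$-unitary property I would mimic the Proposition proved in the previous section for the $E$-unitary case: if $e\in E(\beta(S))^*$ with $e\preceq \beta_s$, write $\beta_e = \beta_s|_{A_e}$ and, assuming $s\notin E(S)$, use a Galois coordinate system $\{x_i,y_i\}$ to compute $1_e = \sum_i x_i\beta_e(y_i1_e) = \sum_i x_i\beta_s(y_i1_e) = \big(\sum_i x_i\beta_s(y_i1_{s^{-1}})\big)1_e = 0$, contradicting $1_e\neq 0$; thus $\beta_s$ is idempotent and $\beta(S)$ is $0$-$E$-unitary.

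With these facts in hand, $\beta(S)$ is a finite $0$-$E$-unitary, categorical-at-zero inverse semigroup acting injectively and unitally on $A$, with $A$ a $\beta'$-Galois extension of $A^{\beta'}=A^\beta$. I would then invoke the Galois correspondence established in this section for that setting (the analogue of Theorem \ref{teocorrgalparceunit}): it yields a one-to-one correspondence $B\mapsto \beta(S)_B$, $U\mapsto A^{\beta'|_U}$ between the $f$-complete full inverse subsemigroups $U$ of $\beta(S)$ and the separable, $\beta'$-strong $A^\beta$-subalgebras $B$ of $A$. The equivalence that a subalgebra is $\beta$-strong if and only if it is $\beta'$-strong is immediate from the definitions.

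Finally I would transfer this correspondence back along $\beta$. The definition of $\beta$-maximal is tailored precisely so that $T\mapsto\beta(T)$ and $U\mapsto\beta^{-1}(U)$ are mutually inverse bijections between the $\beta$-maximal inverse subsemigroups of $S$ and the $f$-complete full inverse subsemigroups of $\beta(S)$: fullness of $T$ together with the condition $\beta^{-1}(t)\subseteq T$ for all $t\in T$ gives $T=\beta^{-1}(\beta(T))$, while $f$-completeness of $\beta(T)$ is exactly the remaining requirement. Composing this bijection with the previous correspondence produces $B\mapsto \beta^{-1}(\beta(S)_B)$ and $T\mapsto A^{\beta|_T}$ as claimed. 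I expect the main obstacle to be not any single computation but the bookkeeping needed to confirm that $\beta(S)$ genuinely satisfies the hypotheses of the injective correspondence and that the $\beta$-maximal/$f$-complete matching is bijective; among these, the $0$-$E$-unitary verification via Galois coordinates is the key structural step.
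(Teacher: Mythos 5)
Your proposal is correct and follows essentially the same route as the paper: the paper's own proof is precisely the Section 6 reduction (pass to the injective image $\beta(S)$, verify the $0$-$E$-unitary property via a Galois coordinate system, identify $\beta$-maximal inverse subsemigroups of $S$ with $f$-complete inverse subsemigroups of $\beta(S)$) transplanted to the zero setting, with the injective, categorical-at-zero correspondence taken as already established by the $\tau$-based machinery developed earlier in the section. Your explicit check that $\beta(S)$ inherits categoricity at zero (using $A_s \neq 0$ for $s \in S^*$) is a needed detail the paper leaves implicit, and your argument for it is correct.
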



\begin{thebibliography}{1}
    \bibitem{auslander1960brauer}
    M. Auslander and O. Goldman. The Brauer group of a commutative ring. \emph{Trans. Am. Math. Soc.}, 97(3):367–409, 1960.

    \bibitem{bagio2012partial}
    D. Bagio and A. Paques. Partial groupoid actions: globalization, Morita theory, and Galois theory. \emph{Comm. Algebra}, 40(10):3658–3678, 2012.

    \bibitem{basata2021galois}
    D. Bagio, A. Sant'Ana, and T. Tamusiunas. Galois correspondence for group-type partial actions of groupoids. \emph{Bull. Belg. Math. Soc. Simon Stevin}, 28:745–767, 2021.

    \bibitem{beuter2019simplicity}
    V. Beuter, D. Gonçalves, J. Öinert, and D. Royer. Simplicity of skew inverse semigroup rings with applications to Steinberg algebras and topological dynamics. \emph{Forum Math.}, 31:543–562, 2019.

    \bibitem{chase1969galois}
    S. U. Chase, D. K. Harrison, and A. Rosenberg. Galois theory and cohomology of commutative rings, volume 52. \emph{Am. Math. Soc.}, 1965.

    \bibitem{cortes2017characterisation}
    W. Cortes and T. Tamusiunas. A characterisation for a groupoid Galois extension using partial isomorphisms. \emph{Bull. Aust. Math. Soc.}, 96(1):59–68, 2017.

    \bibitem{dokuchaev2005associativity}
    M. Dokuchaev and R. Exel. Associativity of crossed products by partial actions, enveloping actions and partial representations. \emph{Trans. Am. Math. Soc.}, 357(5):1931–1952, 2005.

    \bibitem{dokuchaev2007partial}
    M. Dokuchaev, M. Ferrero, and A. Paques. Partial actions and Galois theory. \emph{J. Pure Appl. Algebra}, 208(1):77–87, 2007.

    \bibitem{exel2010actions}
    R. Exel and F. Vieira. Actions of inverse semigroups arising from partial actions of groups. \emph{J. Math. Anal. Appl.}, 363(1):86–96, 2010.

    \bibitem{garta2024}
    C. Garcia and T. Tamusiunas. A Galois correspondence for $K_\beta$-rings. \emph{J. Algebra}, 640:74–105, 2024.

    \bibitem{HS}
    K. Hirata and K. Sugano. On semisimple extensions and separable extensions over non-commutative rings. \emph{J. Math. Soc. Japan}, 18:360–373, 1966.

    \bibitem{knus2006theorie}
    M.-A. Knus and M. Ojanguren. \emph{Théorie de la descente et algàbres d'Azumaya}, volume 389. Springer, 2006.

    \bibitem{lata2021galois}
    W. G. Lautenschlaeger and T. Tamusiunas. Maximal ordered groupoids and a Galois correspondence for inverse semigroup orthogonal actions. \emph{Appl. Categ. Struct.}, 31(5):31, 2023.

    \bibitem{lawson1998inverse}
    M. V. Lawson. \emph{Inverse semigroups: the theory of partial symmetries}. World Scientific, 1998.

    \bibitem{paques2018galois}
    A. Paques and T. Tamusiunas. The Galois correspondence theorem for groupoid actions. \emph{J. Algebra}, 509:105–123, 2018.

    \bibitem{pataIII}
    A. Paques and T. Tamusiunas. On the Galois map for groupoid actions. \emph{Comm. Algebra}, 49(3):1037–1047, 2021.

    \bibitem{pedrotti2023injectivity}
    J. Pedrotti and T. Tamusiunas. Injectivity of the Galois map. \emph{Bull. Brazilian Math. Soc.}, 54(1):1–11, 2023.

    \bibitem{villamayor1966galois}
    O. Villamayor and D. Zelinsky. Galois theory for rings with finitely many idempotents. \emph{Nagoya Math. J.}, 27(2):721–731, 1966.
\end{thebibliography}
\end{document}